\DeclareMathOperator{\ann}{ann}%
\DeclareMathOperator{\rank}{rank}%
\newcommand{\F}{\mathbb{F}}
\newcommand{\frt}{\frac{1}{2}}
\newcommand{\frp}{\frac{1}{p}}
\newcommand{\Fp}{\F_p}
\newcommand{\Ft}{\F_2}
\newcommand{\Gal}{\text{\rm Gal}}
\newcommand{\K}{\mathcal{K}}
\newcommand{\Lcc}{\mathcal{L}}
\newcommand{\N}{\mathbb{N}}
\newcommand{\Tc}{\mathcal{T}}
\newcommand{\Xcc}{\mathcal{X}}
\newcommand{\Ycc}{\mathcal{Y}}
\newcommand{\Z}{\mathbb{Z}}
\newcommand{\comment}[1]{}
\newcommand{\codim}{\text{\rm codim}}
\begin{document}

\keywords{Galois modules, Kummer theory, cyclic extensions, higher power classes}
\subjclass{Primary 12F10; Secondary 16D70}
%\thanks{}

\title[Structure of units modulo $p^m$]{Galois module structure of the units modulo $p^m$ of cyclic extensions of degree $p^n$}

\author[J\'{a}n Min\'{a}\v{c}]{J\'{a}n Min\'{a}\v{c}}
\address{Department of Mathematics, Western University, London, Ontario, Canada N6A 5B7}
\email{minac@uwo.ca}
\thanks{The first author is partially supported by the Natural Sciences and Engineering Research Council of Canada grant R0370A01.  He also gratefully acknowledges the Faculty of Science Distinguished Research Professorship, Western Science, in years 2004/2005 and 2020/2021. The second author was partially supported by 2017--2019 Wellesley College Faculty Awards. The third author was supported  in part by National Security Agency grant MDA904-02-1-0061.}

\author[Andrew Schultz]{Andrew Schultz}
\address{Department of Mathematics, Wellesley College, 106 Central Street, Wellesley, MA \ 02481 \ USA}
\email{andrew.c.schultz@gmail.com}

\author[John Swallow]{John Swallow}
\address{Office of the President, Carthage College, 2001 Alford Park Drive, Kenosha, WI \ 53140 \ USA}
\email{jswallow@carthage.edu}

\begin{abstract}
Let $p$ be prime, and $n,m \in \mathbb{N}$.  When $K/F$ is a cyclic extension of degree $p^n$, we determine the $\mathbb{Z}/p^m\mathbb{Z}[\Gal(K/F)]$-module structure of $K^\times/K^{\times p^m}$. With at most one exception, each indecomposable summand is cyclic and free over some quotient group of $\Gal(K/F)$.  For fixed values of $m$ and $n$, there are only finitely many possible isomorphism classes for the non-free indecomposable summand.  

These Galois modules act as parameterizing spaces for solutions to certain inverse Galois problems, and therefore this module computation provides insight into the structure of absolute Galois groups.  More immediately, however, these results show that Galois cohomology is a context in which seemingly difficult module decompositions can practically be achieved: when $m,n>1$ the modular representation theory allows for an infinite number of indecomposable summands (with no known classification of indecomposable types), and yet the main result of this paper provides a complete decomposition over an infinite family of modules.
\end{abstract}

% There is a quote of Dirac ``A physical law must possess mathematical beauty."

\date{\today}

\maketitle

\newtheorem*{theorem*}{Theorem}
\newtheorem*{lemma*}{Lemma}
\newtheorem{theorem}{Theorem}
\newtheorem{proposition}{Proposition}[section]
\newtheorem{corollary}[proposition]{Corollary}
\newtheorem{lemma}[proposition]{Lemma}

\theoremstyle{definition}
\newtheorem*{definition*}{Definition}
\newtheorem*{remark*}{Remark}
\newtheorem*{example*}{Example}

%\parskip=10pt plus 2pt minus 2pt

%{\parskip=3pt \tableofcontents}

% Ireland, Rosen Classical introduction to modern number theory, 2nd edition, volume 84

% look up John's NSF grant (Proc London) and mention my Doob professorship and my Faculty Award from Wellesley

\epigraph{A physical law must possess mathematical beauty.}{Paul Dirac}

\section{Introduction and Main Theorems}\label{sec:main.theorems}
\subsection{Background and motivation}

One of the deepest problems in modern algebra is to understand absolute Galois groups. For a field $K$, the absolute Galois group $G_K$ is the profinite group associated to the extension $K_{\tiny{\text{sep}}}/K$, where $K_{\tiny{\text{sep}}}$ is the separable closure of $K$.  Though elegant calculations of $G_K$ exist for certain well-chosen fields $K$, the question is unresolved in general.  The case $K = \mathbb{Q}$, for example, is the subject of tremendous interest, and has therefore become a wellspring of considerable mathematical ingenuity.  Since the computation of absolute Galois groups is so challenging, one can pursue the more tractable goal of asking whether (and how) absolute Galois groups distinguish themselves within the collection of profinite groups.  For example, a classical result of Artin and Schreier (see \cite{AS1,AS2}) tells us that any nontrivial finite subgroup of an absolute Galois group must be isomorphic to $\mathbb{Z}/2\mathbb{Z}$.  (The reader who is interested in reading more about infinite Galois groups can consult \cite{Bourbaki}.)

For a given prime $p$, one can also study an analog of the absolute Galois group that serves as the classifying object for Galois $p$-groups over $K$.  If we write $K(p)$ for the maximal pro-$p$ extension of $K$ within $K_{\tiny{\text{sep}}}$, then we can define the absolute $p$-Galois group of $K$ as $G_K(p) := \Gal(K(p)/K)$.  It is a pro-$p$ group.  As with absolute Galois groups, one naturally wonders whether (and how) absolute $p$-Galois groups distinguish themselves within the collection of pro-$p$ groups.  There is much to say in this vein.   For example, Becker proved in \cite{Becker} an analog of the result of Artin and Schrier for absolute $p$-Galois groups.   See \cite{FriedJarden,Jarden.infiniteGalois,Koch,NSW} for fundamental results on Galois pro-$p$ groups. (Note that it is also very interesting to study the $p$-Sylow subgroups of $G_K$.  The interested reader can consult \cite{BarySorokerJardenNeftin}.)  Further progress on the study of pro-$p$ groups (via rigid fields, Galois modules, Demu\v{s}kin groups, etc.) has also been explored in \cite{KlopschSnopce,NosedaSnopceSerre,ShustermanZalesskii,SnopceZalesskii}.

Some fascinating, definite results were obtained in the case where $K$ is a finite extension of the $p$-adic numbers $\mathbb{Q}_p$.  Such a field is an example of a local field of characteristic $0$.  (See, for example, \cite{Gouvea, Guillot, Koblitz} for expositions on the basic properties of $p$-adic numbers.   For more advanced topics on local fields, see \cite{Serre-localfields}.) In 1947, Shafarevich proved the beautiful result that if $K$ is a local field of characteristic $0$ which does not contain a primitive $p$th root of unity, then $G_K(p)$ is a free pro-$p$ group (\cite{Shaf}).  In the case of local fields $K$ of characteristic $0$ which do contain a primitive $p$th root of unity, the situation is more complicated.  It has been clarified only over a number of years by several authors, including Kawada (\cite{Kawada}), Demu\v{s}kin (\cite{Dem1,Dem2,Dem3}), Serre (\cite{Serre.bourbaki}), and Labute (\cite{Labute.thesis}).  Serre showed that in this case $G_K(p)$ is a Demu\v{s}kin group. This means that $\dim_{\mathbb{F}_p}H^1(G_K(p),\mathbb{F}_p)$ is finite, $\dim_{\mathbb{F}_p}H^2(G_K(p),\mathbb{F}_p)=1$, and the cup product $H^1(G_K(p),\mathbb{F}_p) \times H^1(G_K(p),\mathbb{F}_p) \to H^2(G_K(p),\mathbb{F}_p)$ gives a non-degenerate skew-symmetric pairing. In his thesis, Labute was able to completely classify Demu\v{s}kin groups and write down their presentation in terms of generators and relations.  One of the major goals in current Galois theory is to extend this classification from Demu\v{s}kin group to the classification of all finitely generated absolute $p$-Galois groups.  For some details about this work, see \cite{Efrat, Marshall, MSpira, MT-Advances}.  

There are numerous methodologies in use to shed light on the ``special" structural properties of absolute $p$-Galois groups, including the study of Massey products (see \cite{EfratMatzri,GMT,HW,MRT,MT-JLMS,MT-JAlg,MT-JEMS}) and the Koszulity of Galois cohomology (see \cite{MPPT,MPQT,P}).  In this paper, however, we will pursue a particular invariant attached to absolute $p$-Galois groups that can be motivated easily using group-theoretic intuition.  

We know that $G_K(p)$ has some maximal elementary $p$-abelian quotient, and by Galois theory this corresponds to the maximal elementary $p$-abelian extension of $K$.  Assuming the appropriate roots of unity, Kummer theory tells us that this field corresponds to the $\mathbb{F}_p$-space $J(K):=K^\times/K^{\times p}$. When $K$ is itself a Galois extension of some field $F$ (with $\Gal(K/F)$ a $p$-group, just to stay focused on the category of $p$-groups), the natural action of $\Gal(K/F)$ on $K^\times$ descends to $J(K)$, making this $\mathbb{F}_p$-space into an $\mathbb{F}_p[\Gal(K/F)]$-module.  We can then ask: what is the structure of this module? how does it compare to the structure of a generic module over the group ring $\mathbb{F}_p[\Gal(K/F)]$?  

This question has been resolved in the case that $\Gal(K/F)$ is a cyclic $p$-group, both when $K$ contains roots of unity or not (see \cite{Bo,F} for early work over local fields, and \cite{MSS,MS} for the general case). Variants of this same idea have also been explored: an analog in the case of $\text{char}(K)=p$ has been considered in \cite{BS,Schultz}, where the Kummer module $K^\times/K^{\times p}$ is replaced by the Artin-Schreier module $K/\wp(K)$ (with $\wp(K) = \{k \in K: k=\ell^p-\ell \text{ for some }\ell \in K\}$); a cohomological generalization --- motivated by the fact that  $K^\times/K^{\times p} \simeq H^1(G_K(p),\mathbb{F}_p)$ in the presence of appropriate roots of unity --- is computed in \cite{LMSS,LMS} ; and in \cite{BLMS} the two previous ideas are hybridized, with quotients of Milnor $K$-groups used in place of cohomology groups when $\text{char}(K)=p$.  Recently in \cite{CMSS},  the module structure for $K^\times/K^{\times 2}$ was  computed when $\Gal(K/F) \simeq \mathbb{Z}/2\mathbb{Z} \oplus \mathbb{Z}/2\mathbb{Z}$.  A forthcoming paper (\cite{HMS}) will include the determination of the module structure of $J(K)$ whenever $G_K(p)$ is a finitely-generated free pro-$p$-group and $\Gal(K/F)$ is \emph{any} finite $p$-group (assuming either $\xi_p \in F$ or that $F$ is characteristic $p$).  In all cases, the computed modules have far fewer isomorphism types of summands than one expects generically. 

The very strong restrictions on the structures of this broad family of Galois modules represent some exciting developments about absolute $p$-Galois groups in the last 20 years.  Not only does this give some answers to how absolute $p$-Galois groups are special amongst pro-$p$ groups, but it has also been used to derive explicit group-theoretic properties that absolute $p$-Galois groups must satisfy.  Indeed, it is possible to provide module-theoretic parameterizations of certain families of $p$-groups as Galois groups over a given field (see \cite{MSS.auto,MS2,Schultz,Waterhouse}).  By combining these parameterizations with the computed module structures, enumerations (or bounds on enumerations) of Galois extensions have been deduced in \cite{BS,CMSHp3,Schultz}.  Additionally, unexpected connections between the appearance of certain groups as Galois groups over a given field (so-called \emph{automatic realization} results) are deduced in \cite{CMSHp3,MSS.auto,MS2,Schultz}.  Restrictions on absolute $p$-Galois groups in terms of generators and relations have also been derived from the computed module structure of cohomology groups in \cite{BLMS.prop.groups}. In \cite{MT-Advances}, ideas from Galois modules are used to provide an explicit proof of the Vanishing $3$-Massey conjecture, yet another result that constricts the structure of absolute $p$-Galois groups.

\subsection{The main object of our study}

The feasibility and applicability of previous module decompositions encourage us to make further investigations in the cases that might have been considered out of reach before the developments mentioned above.  The purpose of this paper is to give a module decomposition for a more refined notion of power classes than the $p$th power classes already considered.    

More specifically, let  $K/F$ be a cyclic Galois extension of degree
$p^n$, with Galois group $G=\Gal(K/F)= \langle \sigma \rangle$. For $m \in \N$, let
$R_m=\Z/p^m\Z$, and define the $R_mG$-module
\begin{equation*}
    J_m := K^\times/K^{\times p^m}. 
\end{equation*}
In this paper we determine the structure of  $J_m$. %; after we have given field-theoretic interpretations for certain invariants that appear in the module structure of $J_m$ in a subsequent paper, we provide the structure of $J_\infty$ as well.
Our work builds on the case $m=1$ from \cite{MSS}.  In the case of local fields, some results were announced in the 1970s (see \cite{R}).  Analogous results in the characteristic $p$ case have been computed in \cite{MSS.K.in.char.p}.  

Throughout this paper we will write $[\gamma]_m$ to indicate the class in $J_m$ represented by $\gamma \in K^\times$.  In a similar way, when $A \subset K^\times$ we will write $[A]_m$ to indicate the image of $A$ within $J_m$. 

Straightaway we should point out that the modular representation theory for $R_mG$ is very complex when $m>1$. When $n=1$ there is a classification of $R_mG$ modules due to Szekeres in \cite{Szekeres}, but when $n>1$ there are only partial results (see \cite{DrobotenkoEtAl,HannulaThesis,Hannula68,Thevenaz81}). Since a full classification of indecomposable $R_mG$-modules is not known, the reader would be forgiven for thinking that computing a module decomposition for $J_m$ would be impossible (or, at best, unspeakably grotesque).  To the contrary, we will ultimately show that for a given $K$ the module $J_m$ contains at most $n+1$ isomorphism classes of indecomposables, with at most one summand not a free $R_mG_i$-module for some quotient $G_i$ of $G$.  Further, if we fix $m$, $n \in \N$, there are only finitely many possible isomorphism classes for this non-free summand, and hence there are only finitely many possible isomorphism classes of summands of an arbitrary $J_m$.  In this sense, the decomposition we arrive at is striking in its simplicity.  That said, the decomposition is not without its challenges.  In particular, the non-free summand described above takes quite a bit of effort to verify as indecomposable; the relevant result is described in the lead up to the statement of Theorem \ref{th:main} and made concrete in Proposition \ref{pr:indecompII}, but the proof of the indecomposability is given in \cite{MSS2a}.

Before moving on, we observe that any $f \in R_mG$ can be represented uniquely in the form $\sum_{i=0}^{m-1}\sum_{j=0}^{p^n-1} a_{i,j} p^i (\sigma-1)^j$, where $a_{i,j} \in \{0,1,\cdots,p-1\}$.  (This expression is reminiscent of  $p$-adic expansions, though here the expansion moves in two directions.) For further details, see Section \ref{sec:module.basics}. In particular, for a given $[\gamma]_m \in J_m$, every element of the module $\langle [\gamma]_m\rangle$ can be written as a combination of the elements $\{[\gamma]_m^{p^i(\sigma-1)^j}: 0 \leq i \leq m-1, 0 \leq j \leq p^n-1\}$ with coefficients drawn from $\{0,1,\cdots,p-1\}$.   Sometimes it is useful to think of the terms in this module graphically, for which a grid of boxes --- with horizontal progression indicating successive powers of $\sigma-1$, and vertical progression indicating successive powers of $p$ --- can capture certain salient details. See Figure \ref{fig:box.drawing}. In this case, the top-left entry is the element which generates the cyclic module.  Building on the fact that $\sum_{j=0}^{p^n-1} \sigma^j = (\sigma-1)^{p^{n-1}}$ within $\mathbb{F}_p[G]$, the bottom right entry in this representation equals $[N_{K/F}(\gamma)]_m^{p^{m-1}}$.  

\begin{figure}[ht!]
\begin{tikzpicture}[scale=1.0]
\draw[thick,->] (1.5,.5) -- (3.5,.5);
\node at (2.5,1) {$\sigma-1$};
\draw[thick,->] (-.5,-.5) -- (-.5,-2.5);
\node at (-1,-1.5) {$p$};
\draw[step=1cm] (0,0) grid (5,-3);
\node at (0.5,-0.5) {$[\gamma]_3$};
\node at (0.5,-1.5) {$[\gamma]_3^p$};
\node at (1.5,-0.5) {$[\gamma]_3^{\sigma-1}$};
\filldraw[fill=orange!50] (4,-2) -- (5,-2) -- (5,-3) -- (4,-3) -- (4,-2);
\end{tikzpicture}
\caption{A graphical depiction of a cyclic module when $m=3$, $p=5$ and $n=1$.  The bottom right corner in this depiction --- which we've highlighted in orange --- is equivalent to $[N_{K/F}(\gamma)]_m^{p^{m-1}}$.}\label{fig:box.drawing}
\end{figure}
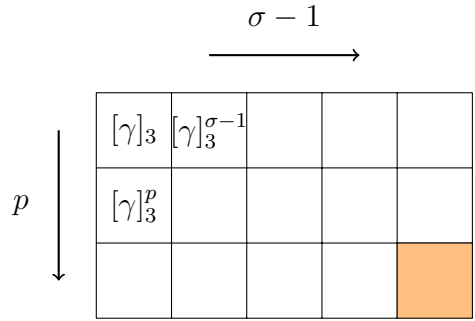
The downside to such a representation is that it can be difficult when $m>1$ to depict relations between terms of the form $[\gamma]_m^{p^i (\sigma-1)^j}$.  On the other hand, the simplest possible dependence --- when one of the terms is trivial --- can be depicted by excluding a given box. % Note that all boxes to the right and below an empty box are necessarily empty.  

We want to give the reader a sense of some of the critical ideas involved in moving from the known decomposition for $m=1$ to the cases where $m>1$.  Ultimately we'd like to produce some generating collection of elements in $J_m$ whose interrelations are understood. The tension is one that should be familiar to any linear algebra student: one must have enough generators to capture all of $J_m$, but not so many that an unexpected dependency arises.  

Fortunately, half of our work is actually quite simple.  If we have any collection $\{[\alpha_i]_1\}_{i \in \mathcal{I}}$ which span $J_1$, then the corresponding elements $\{[\alpha_i]_m\}_{i \in \mathcal{I}}$ must also span $J_m$.  For if we have some $[\gamma]_m \in J_m$, then we know that we can write $[\gamma]_1= \prod_i [\alpha_i]_1^{f_i}$ for some $f_i \in \mathbb{F}_p[G]$.  But this means that $\gamma = \left(\prod_i \alpha_i^{f_i}\right)\beta_1^p$.  Again using the fact that the $[\alpha_i]_1$ span $J_1$, we can find $g_i \in \mathbb{F}_p[G]$ and $\beta_2 \in K^\times$ so that $\beta_1 = \left(\prod_i \alpha_i^{g_i}\right) \beta_2^p$.  Combining this with the previous equation gives $\gamma  = \left(\prod_i \alpha_i^{f_i+pg_i}\right)\beta_2^{p^2}.$  Continuing in this fashion, we ultimately reach our desired result.

Resolving the issue of independence is not always  simple.  We will now explain where one can run into complications.  Suppose that $[\gamma]_{m-1} \in J_{m-1}$ generates a free $R_{m-1}G$-module.  Will $[\gamma]_m \in J_m$ also generates a free $R_mG$-module?  To study this question, it will be useful to know that an element $[\hat \gamma]_m \in J_m$ generates a free module if and only if $[N_{K/F}(\hat \gamma)]_m^{p^{m-1}} \neq [1]_m$ (see Lemma \ref{le:idealrmgiII}).  With this in mind, we have that $\langle [\gamma]_m \rangle$ fails to be free precisely when there is some $\beta \in K^\times$ with $N_{K/F}(\gamma)^{p^{m-1}} = \beta^{p^m}$.  If this occurs, then we can extract $p$th roots on both sides.  Note then that for some $p$th root of unity $\omega \in K^\times$ we get $N_{K/F}(\gamma)^{p^{m-2}} = \omega \beta^{p^{m-1}}$.  Herein lies the rub: though we know that $[N_{K/F}(\gamma)]_{m-1}^{p^{m-2}} \neq [1]_{m-1}$, this doesn't preclude the possibility that $[N_{K/F}(\gamma)]^{p^{m-2}}_{m-1} \in \langle [\omega]_{m-1} \rangle$.  See Figure \ref{fig:box.drawing.free.to.nonfree}.
%\begin{center}
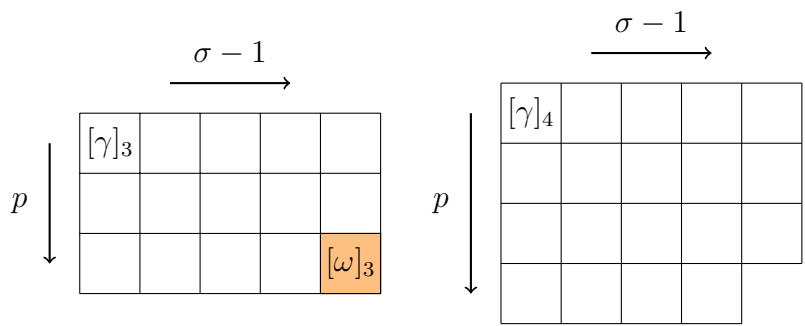
\begin{figure}[ht!]
\begin{tikzpicture}[scale=0.8]
\draw[thick,->] (1.5,.5) -- (3.5,.5);
\node at (2.5,1) {$\sigma-1$};
\draw[thick,->] (-.5,-.5) -- (-.5,-2.5);
\node at (-1,-1.5) {$p$};
\filldraw[fill=orange!50] (4,-2) -- (5,-2) -- (5,-3) -- (4,-3) -- (4,-2);
\draw[step=1cm] (0,0) grid (5,-3);
\node at (0.5,-0.5) {$[\gamma]_3$};
\node at (4.5,-2.5) {$[\omega]_3$};
\draw[thick,->] (8.5,1) -- (10.5,1);
\node at (9.5,1.5) {$\sigma-1$};
\draw[thick,->] (6.5,0) -- (6.5,-3);
\node at (6,-1.5) {$p$};
\draw (7,0.5) -- (12,0.5);
\draw (7,-0.5) -- (12,-0.5);
\draw (7,-1.5) -- (12,-1.5);
\draw (7,-2.5) -- (12,-2.5);
\draw (7,-3.5) -- (11,-3.5);
\draw (7,0.5) -- (7,-3.5);
\draw (8,0.5) -- (8,-3.5);
\draw (9,0.5) -- (9,-3.5);
\draw (10,0.5) -- (10,-3.5);
\draw (11,0.5) -- (11,-3.5);
\draw (12,0.5) -- (12,-2.5);
\node at (7.5,0) {$[\gamma]_4$};
\end{tikzpicture}
\caption{$\langle [\gamma]_{m-1}\rangle$ can be free without $\langle [\gamma]_m\rangle$ being free if $[N_{K/F}(\gamma)]_{m-1}^{p^{m-1}}$ is a nontrivial $p$th root of unity.}\label{fig:box.drawing.free.to.nonfree}
\end{figure}
%\end{center}

There are a few takeaways from this example.  First, if one happens to be in the happy situation where the entire module $J_{m-1}$ is free, and the field simply doesn't have any non-trivial $p$th roots of unity, then the sketch above provides the blueprint for extending the freeness of $J_{m-1}$ to $J_m$.  Fortunately, there are conditions on a field $K$ that ensure $J_1$ is (essentially) free, and they include the absence of non-trivial roots of unity.  This provides the easiest case to analyze, and this case is detailed in Theorem \ref{th:noxip}.  

Generically, though, we \emph{do} expect $J_1$ to have non-free summands, as well as a variety of $p$th roots of unity (and, indeed, higher $p$th power roots of unity as well).  The challenge becomes twofold.  On the one hand, free cyclic modules are  easier to work with, and so we'd like to have as many free cyclic summands as we can; this requires work.   On the other hand, the sketch above shows the root\footnote{no pun intended} of many complications as we move from $J_{m-1}$ to $J_m$: the presence of a primitive $p$th roots of unity in equations modulo $K^{\times p^{m-1}}$.  Therefore, we need to be aware of where a primitive $p$th root of unity sits.  If it can be suitably quarantined in its own indecomposable summand, it will be easier to prevent its interference in the rest of the module.

\subsection{Main Results}
As we mentioned above, we will ultimately show that each $J_m$ contains at most $n+1$ isomorphism classes of indecomposables, with at most one that is not a free $R_mG_i$-module for some quotient $G_i$ of $G$.  If $J_m$ contains such a non-free summand, we call it an exceptional summand of $J_m$, and we determine its structure in terms of the arithmetic of $K/F$.  Such a summand appears only when $F$ contains a primitive $p$th root of unity $\xi_p$, but does not contain all of the $p$-power roots of unity in $F^{\text{sep}}$.  If we let $\nu$ be chosen so that $\xi_{p^\nu}$ is the maximal $p$-power root of unity in $K$, then the isomorphism type of the exceptional module is determined by $\nu$ in the case $p=2$, $n=1$ and $-1\not\in N_{K/F}(K^\times)$.  Otherwise, the structure of an exceptional summand is determined by a vector $\mathbf{a}\in \{-\infty,0,\dots,n\}^m$ and a natural number $d\in \N$.

Both the elements of $\mathbf{a}$ and the number $d$ are determined by examining so-called ``minimal norm pairs" in $K^\times$ (see section \ref{se:np1}). In \cite{MSS2c} we show that the elements of $\mathbf{a}$ are determined by the
presence of $p$-power roots of unity as norms from $K$ to subfields
of $K/F$, and hence are unique.  In that paper we show $d$ is uniquely-defined only modulo $p^{\min\{m,\nu\}}$, and that it is the cyclotomic character.

To state our results more precisely, we first develop some notation which will be useful for the balance of the paper. For $i\in \N$ set $U_i=1+p^i\Z$ and additionally set $U_\infty=\{1\}$. We adopt several conventions: $p^{-\infty}=0$, $\sigma^{p^{-\infty}}=0$, and $\{0\}$ is a free module over any ring. For $l\in \N$, $d\in U_1$, and a vector $\mathbf{a}\in \{-\infty, 0, \dots,n\}^l$ of length $l$, define the $\Z_p G$-module $X_{\mathbf{a},d}$ by
\begin{align*}
    \left\langle y,x_0,\cdots,x_{l-1} :
    (\sigma-d)y = \sum_{i=0}^{l-1} p^i x_i; \sigma^{p^{a_i}}x_i=x_i \text{ for all }0 \leq i \leq l-1\right\rangle.
\end{align*}
(Note that when $a_i=-\infty$, our convention that $\sigma^{p^{-\infty}} = 0$ tells us that the corresponding $x_i$ equals $0$.)  Then for $m\in \N$ define
\begin{equation*}
    X_{\mathbf{a},d,m} = X_{\mathbf{a},d}/p^mX_{\mathbf{a},d}.
\end{equation*}
It is shown in \cite[Prop.~5.1]{MSS2a} that under certain conditions on $\mathbf{a}$ and $d$ (listed in Proposition \ref{pr:indecompII}), the vector $\mathbf{a}$ is a module-theoretic invariant of $X_{\mathbf{a},d,m}$: if $(\mathbf{a}_1,d_1)$ and $(\mathbf{a}_2,d_2)$ both satisfy the conditions of Proposition \ref{pr:indecompII}, and if $\mathbf{a}_1 \neq \mathbf{a_2}$, then $X_{\mathbf{a}_1,d_1,m} \not\simeq X_{\mathbf{a}_2,d_2,m}$.  On the other hand, even under the conditions of Proposition \ref{pr:indecompII} the quantity $d$ is not necessarily a module theoretic invariant (see \cite[Th.~1]{MSS2c}).

For $i\in \{0, \dots, n\}$, let $K_i$ denote the intermediate field of $K/F$ of degree $p^i$ over $F$. For convenience it will often be useful to write $K_{-\infty}^\times = \{1\}$, and furthermore to interpret $K_{i-1}^\times$ as $K_{-\infty}^\times$ when $i=0$.  We abbreviate $\Gal(K_i/F)$ by $G_i$.  Moreover, define
\begin{align*}
    e_i(K/F) &= \dim_{\Fp} N_{K_i/F}(K_i^\times)F^{\times p}/
    N_{K_{i+1}/F}(K_{i+1}^\times )F^{\times p}, \quad 0\le i<n\\
    e_n(K/F) &= \dim_{\Fp} N_{K/F}(K^\times)F^{\times p}/F^{\times p}.
\end{align*}

We can now state the primary results.

\begin{theorem}\label{th:main}
    Let $m \in \N$, and suppose that $\xi_p\in F$.  If $p = 2$ and $n=1$ suppose that
    $-1 \in N_{K/F}(K^\times)$ as well.  Then
    \begin{equation*}
        J_m \simeq X \oplus \bigoplus_{i=0}^n Y_i,
    \end{equation*}
    where for some $\mathbf{a}\in \{-\infty,0,\dots,n\}^m$ and $d\in
    U_1$,
    \begin{enumerate}
        \item $X$ is an indecomposable $R_mG$-module isomorphic to
        $X_{\mathbf{a},d,m}$; and
        \item for each $i$, $Y_i$ is a free $R_m G_i$-module
        of rank $e(i,m)$, where
        \begin{align*}
            e(i,m) + \vert\{a_j=i\ :\ 0\le j<m\}\vert &= e_i(K/F),
            \quad 0\le i<n \\
            e(n,m) + 1 + \vert\{a_j=n\ :\ 0\le j<m\}\vert &=
            e_n(K/F).
        \end{align*}
    \end{enumerate}
\end{theorem}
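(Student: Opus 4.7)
The plan is to construct the decomposition directly on the level of $K^\times$, bootstrapping from the $m=1$ decomposition of $J_1$ in \cite{MSS} and then lifting its generators to representatives that detect the module structure of $J_m$. The spanning half is the easy lifting argument already sketched in the introduction: any family whose classes span $J_1$ automatically has classes spanning $J_m$. The entire difficulty is therefore independence, i.e., verifying that the candidate summands really assemble into a direct sum with the prescribed relations.

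First I would fix representatives $\alpha \in K^\times$ for generators of the summands of $J_1$ and use Lemma \ref{le:idealrmgiII} to convert freeness of $\langle [\alpha]_m\rangle$ as an $R_m G_i$-module into the arithmetic statement that $N_{K_i/F}(\alpha)^{p^{m-1}}$ does not become a $p^m$-th power. This reframes the obstruction as the question of whether iteratively extracting $p$-th roots of the relevant norm produces a primitive $p$-power root of unity, exactly the phenomenon illustrated in Figure \ref{fig:box.drawing.free.to.nonfree}. Those generators for which no such obstruction arises will contribute to the free summands $Y_i$.

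Next I would construct the exceptional summand $X \simeq X_{\mathbf{a},d,m}$ from a ``minimal norm pair'' in $K^\times$. For each $j \in \{0,\dots,m-1\}$, set $a_j$ to be the smallest index such that $\xi_{p^{j+1}}$ is a norm from $K$ down to $K_{a_j}$ (with $a_j=-\infty$ when no such index exists), and let $d \in U_1$ be the cyclotomic character recording the action of $\sigma$ on $\xi_{p^m}$. Together these produce precisely the defining relation $(\sigma-d)y = \sum_{i=0}^{m-1} p^i x_i$ of $X_{\mathbf{a},d,m}$: the generator $y$ is chosen so that its norms realize the relevant roots of unity level-by-level, and the auxiliary $x_i$ are read off from $\xi_{p^{i+1}}$ descended to $K_{a_i}$. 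This ``quarantining'' of the root-of-unity obstruction inside $X$ is what forces the rank balance $e(i,m) + |\{a_j = i\}| = e_i(K/F)$, with the extra $+1$ at $i=n$ accounting for $X$'s own cyclic generator contributing to $N_{K/F}(K^\times)F^{\times p}/F^{\times p}$.

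The hard part will be verifying that no dependency arises in $J_m$ beyond those already encoded by the defining relations of $X_{\mathbf{a},d,m}$ and the freeness of each $Y_i$. This requires careful norm computations tracing how $p$-power roots of unity propagate through the tower $K/K_i/F$, together with an induction on $m$ that promotes the $m=1$ independence from \cite{MSS} to higher levels while respecting the new relations forced by $\xi_{p^{j+1}}$ for $j \ge 1$. I would not attempt to prove indecomposability of $X_{\mathbf{a},d,m}$ directly; instead, after checking that the $(\mathbf{a},d)$ produced by the construction satisfies the hypotheses of Proposition \ref{pr:indecompII}, I would invoke the indecomposability result established in \cite{MSS2a}. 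The excluded case $p=2$, $n=1$, $-1 \notin N_{K/F}(K^\times)$ sits outside this framework precisely because the cyclotomic relation cannot be normalized into the required $X_{\mathbf{a},d,m}$ form, and it demands a separate small-case analysis.
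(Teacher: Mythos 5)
Your outline has the right global shape (isolate an exceptional summand, lift a good $\F_p G$-basis of $J_1$, reduce indecomposability to the criterion of Proposition~\ref{pr:indecompII}), but it inverts the logical order of the paper in a way that leaves the central work undone. You propose to \emph{define} $a_j$ as ``the smallest index such that $\xi_{p^{j+1}}$ is a norm from $K$ down to $K_{a_j}$'' and $d$ as the cyclotomic character. In the paper that description is not a definition but a \emph{theorem}, and moreover one proved only in the companion paper \cite{MSS2c}. The working definition here is via norm pairs: tuples $(\alpha,\{\delta_i\})$ with $\delta_i\in K_{a_i}^\times$ satisfying the pair of equations in (\ref{eq:defining.properties.for.exceptionality}), together with the partial order $\preceq_m$ and the minimality condition. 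This operational definition is what the whole argument runs on, and replacing it with the after-the-fact arithmetic characterization does not give you the handle you need: it is not even clear from your description how the $x_i$ come with the relation $(\sigma-d)y=\sum p^i x_i$ rather than merely lying in $K_{a_i}^\times$, nor how one produces an $\alpha$ realizing the $y$.

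More seriously, you do not address how the hypotheses of Proposition~\ref{pr:indecompII} are to be verified. Conditions (IV) and (V) are strict inequalities of the form $a_i+j<a_{i+j}$ (with $p=2$ exceptions), and these are the content of Propositions~\ref{pr:basic.norm.pair.inequality} and \ref{pr:nv3}. Their proofs hinge on explicit modifications $(\check\alpha,\{\check\delta_i\})$ of a given representative using the operators $P(i,j)$ and the evaluation map $\phi_d$ (Lemmas~\ref{le:phiII}, \ref{le:excmod}, \ref{le:delicate}) to produce a strictly smaller norm pair, contradicting minimality. Nothing in your proposed definition of $a_j$ exposes these inequalities, and they do not follow formally from the norm characterization alone. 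Similarly, to show the exceptional module is \emph{isomorphic} to $X_{\mathbf{a},d,m}$ (not just maps onto it), one must identify the kernel of the evaluation map, which is the content of Proposition~\ref{pr:isoclass} and relies on Lemmas~\ref{le:Xmodm} and \ref{le:deltam-1free}. Your sketch also does not supply the induction that establishes freeness and independence of the complementary generators across levels $j\le m$ (Section~\ref{sec:main.proofs}, items (3)--(4)), which is where the quarantining of $\xi_p$ inside $[X]_{j-1}$ actually gets used. In short, the proposal correctly names the endpoints but omits the norm-pair machinery that connects them, and that machinery is where the theorem actually lives.
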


%\begin{theorem}\label{th:decomposition.for.J.infinity}
%    Continue with the hypotheses of Theorem \ref{th:main}. Then there exists $\mathbf{a} \in \{-\infty,0,\dots,n\}^\infty$ and $d \in U_1$ so that
%    \begin{equation*}
%        J_\infty \simeq X \oplus \bigoplus_{i=0}^n Y_i,
%    \end{equation*}
%    where
%    \begin{enumerate}
%        \item $X$ is a finitely generated indecomposable $\Z_p G$-module isomorphic to $X_{\mathbf{a},d}$
%        \item for each $i$, $Y_i$ is a free $\Z_p G_i$-module of rank $e(i,\nu+1)$ if $\nu<\infty$ and $e_i(K/F)$ if $\nu=\infty$.
%    \end{enumerate}
%\end{theorem}

%The remaining case presents a few special technical considerations, and hence varies slightly from the results above.

\begin{theorem}\label{th:p2n1m1notnorm}
    Suppose that $p=2$, $\xi_2\in F$, $n=1$, and $-1 \notin N_{K/F}(K^\times)$. Then
    \begin{equation*}
        J_\infty \simeq \begin{cases} Z \oplus Y_0 \oplus Y_1,
        &\nu<\infty\\
        Y_0\oplus Y_1, &\nu=\infty
        \end{cases}
    \end{equation*}
    where: $Y_0$ is a trivial $\Z_2 G$-module which is a free $\Z_2$-module with rank $e(0)=e_0(K/F)-1$;  $Y_1$ is a free $\Z_2 G$-module of rank $e(1)=e_1(K/F)-1$ when $\nu<\infty$ and
    $e(1)=e_1(K/F)$ when $\nu=\infty$; and, when $\nu<\infty$,
    $Z\simeq \Z_2G/\langle 2^{\nu}(\sigma-1)\rangle$ is indecomposable.

    Hence for $m\in \N$,
    \begin{equation*}
        J_m \simeq \begin{cases} (Z/2^m) \oplus
        (Y_0/2^m) \oplus (Y_1/2^m), &\nu<\infty\\
        (Y_0/2^m) \oplus (Y_1/2^m), &\nu=\infty
        \end{cases}
    \end{equation*}
    where $Z/2^mZ$ is indecomposable, $Y_0$ is a trivial $R_m G$-module which is a free $R_m$-module with rank $e(0)$, and $Y_1$ is a free $R_m G$-module of rank $e(1)$.
\end{theorem}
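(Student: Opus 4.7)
My plan is to establish the structure of $J_\infty$ first and then deduce the $J_m$ decomposition via the identification $J_m \simeq J_\infty/2^m J_\infty$. The critical preliminary observation is that the hypotheses ($\nu<\infty$ and $-1\notin N_{K/F}(K^\times)$) force $\sigma(\xi_{2^\nu}) = \xi_{2^\nu}^{-1}$, so in particular $N_{K/F}(\xi_{2^\nu})=1$. Indeed, if $\xi_{2^\nu}\in F$ then $\nu$ must equal $1$, since $\xi_4\in F$ would yield $N_{K/F}(\xi_4)=\xi_4^2=-1$, contradicting the hypothesis; and if $\xi_{2^\nu}\in K\setminus F$, one checks that the involutions $d\in\{2^{\nu-1}-1,\,2^{\nu-1}+1\}\subset(\Z/2^\nu\Z)^\times$ describing $\sigma(\xi_{2^\nu})=\xi_{2^\nu}^d$ each produce $-1$ as a norm of a power of $\xi_{2^\nu}$, leaving only $d=-1$.

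To construct the exceptional summand $Z$ when $\nu<\infty$: since $N_{K/F}(\xi_{2^\nu})=1$, Hilbert's Theorem 90 produces $\alpha\in K^\times$ with $\sigma\alpha/\alpha=\xi_{2^\nu}$. Setting $y=[\alpha]_\infty\in J_\infty$, we get $(\sigma-1)y=[\xi_{2^\nu}]_\infty$, which has order exactly $2^\nu$ in $J_\infty$ (since $\xi_{2^{\nu+1}}\notin K$ forces $\xi_{2^\nu}\notin K^{\times 2^k}$ for every $k\ge 1$, while $\xi_{2^\nu}^{2^\nu}=1$). Thus $2^\nu(\sigma-1)y=0$, giving a surjection $\Z_2 G/\langle 2^\nu(\sigma-1)\rangle\twoheadrightarrow \langle y\rangle\subset J_\infty$. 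The hypothesis $-1\notin N_{K/F}(K^\times)$ is essential for proving this surjection is injective: any extra relation on $y$ would translate, via the norm, into a realization of $-1$ as a norm from $K^\times$.

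For $Y_0$ and $Y_1$, the plan is to lift an $\F_2$-basis of a complement of $N_{K/F}(K^\times)F^{\times 2}\langle -1\rangle$ inside $F^\times$ (or of $N_{K/F}(K^\times)F^{\times 2}$ inside $F^\times$ when $\nu=\infty$) to produce a trivial-$G$-action free $\Z_2$-module $Y_0$ of rank $e(0)$; and then choose preimages in $K^\times$ whose norms give a basis of the appropriate complement in $N_{K/F}(K^\times)F^{\times 2}/F^{\times 2}$, invoking an infinite-$m$ analogue of Lemma \ref{le:idealrmgiII} to verify that these generate free cyclic $\Z_2 G$-summands totaling $Y_1$ of rank $e(1)$. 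Directness and exhaustivity of the map $Z\oplus Y_0\oplus Y_1\to J_\infty$ can then be verified by reducing modulo $2$ and comparing $\F_2$-dimensions against the known structure of $J_1$ from \cite{MSS}. In the $\nu=\infty$ case no exceptional summand appears, and the class normally absorbed into $Z$ instead contributes to $Y_1$, giving $e(1)=e_1(K/F)$.

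The main obstacle is verifying injectivity of the map $\Z_2 G/\langle 2^\nu(\sigma-1)\rangle\to J_\infty$ defining $Z$; this is where the non-norm hypothesis plays its key role. Indecomposability of $Z$ and of $Z/2^m$ follows from computing their endomorphism rings (both $\Z_2 G/\langle 2^\nu(\sigma-1)\rangle$ and its $2^m$-quotients are local) or by appeal to \cite{MSS2a}. Finally, the $J_m$ statement follows from $J_\infty/2^m J_\infty\simeq J_m$ by taking $2^m$-quotients of each summand: $Y_0/2^m$ becomes a trivial $R_m G$-module free of rank $e(0)$ over $R_m$; $Y_1/2^m$ becomes a free $R_m G$-module of rank $e(1)$; and $Z/2^m$ is an indecomposable $R_m G$-module, which coincides with $R_m G$ itself when $m\le\nu$.
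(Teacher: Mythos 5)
Your overall blueprint is close to the paper's: both build the exceptional summand around an element $\lambda$ (your $\alpha$ from Hilbert 90) satisfying $(\sigma-1)\lambda=\xi_{2^\nu}^{\pm 1}$, both use the $m=1$ decomposition from \cite{MSS} to supply generators, and both pin down $\ann\langle\lambda\rangle=\langle 2^\nu(\sigma-1)\rangle$ and invoke locality of $R_mG$ for indecomposability. The differences in construction (Hilbert~90 versus the paper's explicit $\lambda=f_1+(1-f_0)\xi_4$ when $\nu>1$) and in direction (you work with $J_\infty$ first and then reduce modulo $2^m$, whereas the paper proves each $J_m$ by induction and passes to the inverse limit afterward) are both legitimate.

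There is, however, a genuine gap in your verification of independence. You write that ``directness and exhaustivity of the map $Z\oplus Y_0\oplus Y_1\to J_\infty$ can then be verified by reducing modulo $2$ and comparing $\F_2$-dimensions against the known structure of $J_1$.'' Reduction modulo $2$ only controls $J_1$, and both sides here carry $2$-power torsion (e.g.\ $[\xi_{2^\nu}]_\infty$), so a mod-$2$ dimension count cannot rule out relations that appear only at higher powers of $2$. Concretely, $\Z_2 G/\langle 2^\nu(\sigma-1)\rangle$ and $\Z_2 G/\langle 2^{\nu-1}(\sigma-1)\rangle$ become isomorphic after reducing mod $2$, so nothing in a mod-$2$ comparison distinguishes them; it is precisely the annihilator exponent $\nu$ that carries the arithmetic content. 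The paper's proof closes exactly this gap with an induction on $m$: a hypothetical relation in $J_m$ is divided by $2$, a sign factor $(-1)^c$ enters when taking square roots, and one then argues that this factor either dies in $J_{m-1}$ (when $m\le\nu$) or produces a nonzero element of $\langle[\lambda]_{m-1}\rangle\cap(\text{other summands})$, contradicting the independence guaranteed by the inductive hypothesis. Your sketch correctly performs this style of argument for the annihilator of $Z$ itself but does not carry it out for the independence of $Z$, $Y_0$, and $Y_1$, nor for the freeness of the cyclic summands in $Y_0\oplus Y_1$ at every level $m$. Until that inductive bookkeeping of the $\pm1$ factor is supplied, the decomposition is not established. (A smaller point: the identification $J_m\simeq J_\infty/2^m J_\infty$, which your argument relies on, deserves a sentence of justification via surjectivity of the transition maps; the paper sidesteps this by proving the $J_m$ statement directly and only then taking inverse limits.)
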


\begin{theorem}\label{th:noxip}
    Suppose that $F$ contains no primitive $p$th root of unity. Then
    \begin{equation*}
        J_\infty \simeq \bigoplus_{i=0}^n Y_i,
    \end{equation*}
    where for each $i$, $Y_i$ is a free $\Z_p G_i$-module of rank $e_i(K/F)$.

    Hence for $m\in \N$,
    \begin{equation*}
        J_m \simeq \bigoplus_{i=0}^n \ (Y_i/p^m),
    \end{equation*}
    where for each $i$, $Y_i/p^m$ is a free $R_m G_i$-module of rank $e_i(K/F)$.
\end{theorem}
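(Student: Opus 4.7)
My plan is to lift the known $m=1$ decomposition of \cite{MSS} (valid precisely when $\xi_p\notin F$) through induction on $m$, leveraging the key arithmetic fact that $\xi_p\notin F$ forces $\xi_p\notin K$ as well: since $[F(\xi_p):F]$ divides $p-1$ while $[K:F]=p^n$, we have $F(\xi_p)\cap K=F$, so $K$ contains no nontrivial $p$th root of unity. This is the engine that prevents the pathology described in the discussion preceding Figure \ref{fig:box.drawing.free.to.nonfree}. First, by \cite{MSS} I would choose lifts $\alpha_{i,j}\in K_i^\times$ (for $0\le i\le n$ and $1\le j\le e_i(K/F)$) whose classes in $J_1$ realize a decomposition $J_1\simeq \bigoplus_{i,j}\langle [\alpha_{i,j}]_1\rangle$, with each summand free cyclic over $\F_p G_i$. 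Because $\alpha_{i,j}\in K_i^\times$, the element $\sigma^{p^i}$ fixes $\alpha_{i,j}$ outright, so each $\langle [\alpha_{i,j}]_m\rangle$ is automatically a cyclic $R_m G_i$-module; and by the Nakayama-type lifting argument sketched in the introduction, the collection $\{[\alpha_{i,j}]_m\}$ generates $J_m$ as an $R_m G$-module. It therefore suffices to show that the resulting surjection
\begin{equation*}
    \Phi_m : \bigoplus_{i,j} R_m G_i \twoheadrightarrow J_m,
\end{equation*}
sending the canonical generator of the $(i,j)$-summand to $[\alpha_{i,j}]_m$, is an isomorphism.

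Next I would establish injectivity by induction on $m$; the base case $m=1$ is the hypothesis. For the inductive step, suppose $\Phi_m\bigl((f_{i,j})\bigr)=0$, i.e., $\prod_{i,j}\alpha_{i,j}^{f_{i,j}}=\beta^{p^m}$ in $K^\times$ for some $\beta$ and some lifts of the $f_{i,j}$ to $\Z G$. Reduction mod $p$ together with the $m=1$ decomposition forces $f_{i,j}\equiv 0\pmod p$; write $f_{i,j}=p g_{i,j}$. The relation becomes $\bigl(\prod_{i,j}\alpha_{i,j}^{g_{i,j}}\bigr)^p=\beta^{p^m}$, and extracting a $p$th root in $K^\times$ --- unambiguous since $K$ has no nontrivial $p$th root of unity --- yields $\prod_{i,j}\alpha_{i,j}^{g_{i,j}}=\beta^{p^{m-1}}$. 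This is precisely the vanishing of $\Phi_{m-1}\bigl((g_{i,j}\bmod p^{m-1})\bigr)$, so by the inductive hypothesis $g_{i,j}\equiv 0\pmod{p^{m-1}}$, whence $f_{i,j}=p g_{i,j}=0$ in $R_m G_i$. The $J_\infty$ statement then follows by passing to the inverse limit over $m$, with the free $\Z_p G_i$-module $Y_i$ arising as $\varprojlim_m(Y_i/p^m)$.

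The hard part is precisely the $p$th-root extraction in the inductive step: without the absence of $\xi_p$ in $K$, extracting a $p$th root would introduce an ambiguity (a $p$th root of unity factor) that would obstruct the descent from $J_m$ to $J_{m-1}$. This is exactly the pathology illustrated in Figure \ref{fig:box.drawing.free.to.nonfree}, and is the reason the other main theorems require an extra exceptional summand to absorb that obstruction. Under the present hypothesis the obstruction disappears entirely, which is why the decomposition is as clean as it is.
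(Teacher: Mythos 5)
Your proposal is correct and uses the same core engine as the paper's proof: both lift the $m=1$ decomposition of \cite{MSS} by induction on $m$, with the key arithmetic input that $\xi_p\notin F$ forces $\xi_p\notin K$ (since $[F(\xi_p):F]\mid p-1$ is prime to $p^n$), so $p$th-root extraction in $K^\times$ is unambiguous and relations descend cleanly from $J_m$ to $J_{m-1}$. The only difference is organizational: you package the inductive step as injectivity of a single surjection $\Phi_m:\bigoplus_{i,j}R_mG_i\twoheadrightarrow J_m$ (reduce a putative relation mod $p$, conclude all coefficients lie in $pR_mG_i$, extract a $p$th root, and apply $\Phi_{m-1}$-injectivity), whereas the paper splits the step into two claims --- each $\langle[\gamma]_m\rangle$ is free over $R_mG_{e(\gamma)}$, and the collection is independent --- using Lemma~\ref{le:idealrmgiII} (the minimal nonzero ideal of $R_mG_i$), Lemma~\ref{le:starrmgiII}, and Lemma~\ref{le:exclII} ($\star$-submodules) to reduce both claims to a relation involving $p^{m-1}P(e,0)$, then extracting roots. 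Your version is slightly more elementary in that it avoids the $\star$-submodule machinery, at the cost of having to treat arbitrary coefficient tuples rather than just socle elements; the paper's formulation is the one that generalizes to the harder cases of Theorems~\ref{th:main} and~\ref{th:p2n1m1notnorm}, which is presumably why it is phrased that way.
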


The decompositions we find in the preceding theorems follow a trend that has been emerging in module decompositions related to Galois cohomology in the last two decades: the decomposition itself consists mostly of ``free" summands, with only a small ``exceptional" summand that encodes certain important arithmetic information.  

It might seem somewhat prosaic to say that the trend simply continues in this case. One must remember, however, that the modular representation theory in this setting is vastly more complex than module decompositions that have appeared in the literature.  Given the chaotic nature of modules in this particular setting, \emph{a priori} it would have been surprising to know that a module decomposition was possible it all.  The fact that the decomposition \emph{is} possible --- and that the summands are limited to such a thin set of possibilities (including a new indecomposable type that was born out of an analysis of arithmetic conditions related to norms) --- makes the results in this paper a considerable advertisement for the continued analysis of modules related to Galois cohomology.  Outside of the direct applications of these results, we hope this paper inspires further explorations of objects such as these, even in cases where the modular representation theory makes their decomposition seemingly intractable.

Finally, we point out that the current suite of results on Galois modules (used as a tool for analyzing absolute Galois groups) has some natural interconnections with other research agendas in and around algebra.  For example, one extremely powerful technique in Galois theory has been that of ``patching."  Harbater used this as a key tool in proving that every finite group is Galois group over a regular extension of $\mathbb{Q}_p(t)$ (see \cite{Harbater}, as well as \cite{Raynaud}).  For a basic reference, see \cite[Ch.~8]{SerreTopics}; a recent survey article on topics related to this result is \cite{Harbater-Obus-Pries-Stevenson}.  Patching has proved to be an exceptionally versatile and powerful tool, and has been applied to the study torsors, central simple algebras, quadratic forms, and absolute Galois groups (see \cite{Harbater-et-al,Harbater-Hartmann,Harbater-Hartmann-Krashen-torsors,Harbater-Hartmann-Krashen-quadratic-forms,Jarden.Patching}).  The patching method could be a reasonable tool to reach for when constructing suitable Galois modules, where one could conceivably begin with smaller modules as building blocks for constructing larger modules.  This would be a very powerful new tool to add to the arsenal of techniques in the investigation of Galois modules (and therefore absolute Galois groups). 

As another example, we point out that these module investigations might have interesting connections to anabelian geometry.  Progress has been made in studying the anabelian features of Galois groups using surprisingly small groups; see \cite{BogRovTsc,BogTsc1,BogTsc2,BogTsc3,BogTsc4,BogTsc5,BogTsc6,BogTsc7,BogTsc8,CEM,EfratMinac,MSpira}. Interestingly, anabelian geometry also has deep connections to rational points on curves. (The interested reader can consult \cite{Grothendieck} for the origin of the section conjecture, as well as \cite{JardenPetersen,Wickelgren.3.nilpotent,Wickelgren.n.nilpotent,Wickelgren.2.nilpotent,Wickelgren.massey} for some recent work in this area.)  One can see examples of Galois modules used in service of understanding arithmetic geometry problems related to Fermat curves; see \cite{Anderson,DavisPriesWickelgren,DavisPriesStojanoskaWickelgren.Fermat,DavisPriesStojanoskaWickelgren.relativeFermat}. Given the strong connection between Galois modules and absolute Galois groups, a determination of Galois module structures from Galois groups would fit very nicely in the birational anabelian program.

\subsection{Outline and notational conventions} 
In section \ref{sec:module.basics} we recall a few basic module-theoretic facts from \cite{MSS2a}.  We then offer proofs of Theorems \ref{th:p2n1m1notnorm} and \ref{th:noxip}, since they can be resolved with relative ease.  The rest of the paper focuses on the case where $\xi_p \in F$, and assumes that $-1 \in N_{K/F}(K^\times)$ when $p=2$ and $n=1$.  In section \ref{se:np1} we introduce norm pairs and exceptional elements, and we explore their connection to a notion of exceptionality defined in \cite{MSS}. In section \ref{se:np2} certain fundamental inequalities are established for minimal norm pairs, and in section \ref{se:excelts} we investigate the nature of elements representing minimal norm pairs when considered in $J_1$.  
In section \ref{sec:exceptional.modules} we show that one can construct an exceptional summand of $J_m$ with elements representing a minimal norm pair, and that this summand is indecomposable.  With all of this machinery in hand, we give a proof for Theorem \ref{th:main} in section \ref{sec:main.proofs}.

Up to this point we have been writing $J_m$ multiplicatively, which is a natural enough choice considering it is a quotient of the multiplicative group $K^\times$.  On the other hand, this forces the $R_mG$-action to be exponential, and our expressions will often be complex enough that recording things exponentially makes for difficult reading.  Hence, from this point forward we will write the module $J_m$ additively, so that the action of $R_mG$ is multiplicative.  Starting in section \ref{se:np1} we will adopt additive notation when discussing the group $K^\times$ as well; in particular we will write $0$ for the element $1 \in K^\times$, and $\frac{1}{p}\gamma$ to indicate $\root{p}\of{\gamma}$ for $\gamma \in K^{\times p}$. (We avoid using additive notation for $K^\times$ in section \ref{sec:proofs.in.non.generic.cases} because there is one place where we'll need to consider the sum of two elements from $K^\times$.)

\subsection{Acknowledgements}

We gratefully acknowledge discussions and collaborations with our friends and colleagues L.~Bary-Soroker, S.~Chebolu, F.~Chemotti, I.~Efrat, A.~Eimer, J.~G\"{a}rtner, S.~Gille, P.~Guillot, L.~Heller, D.~Hoffmann, J.~Labute, C.~Maire, D.~Neftin, N.D.~Tan, A.~Topaz, R.~Vakil and K.~Wickelgren which have influenced our work in this and related papers. We would also like to thank the anonymous referee who provided valuable feedback that improved the quality and exposition of this manuscript.

\section{Some module-theoretic basics}\label{sec:module.basics}

The following results were established in \cite{MSS2a} and will be useful in our current investigation.  The first two are arithmetic in nature, while the remainder give important information about $R_mG$ modules.

\begin{lemma}[{\cite[Lem.~2.1]{MSS2a}}]\label{le:upowerII}
    Suppose that $i\in \N$, and $d\in U_i$,
    and $j\ge 0$.  Then
    \begin{equation*}
        \begin{cases}
            d^{p^j}\in U_{i+j},
            &p>2 \text{\ or\ } i>1 \text{\ or\ } j=0\\
            d^{p^j}=1, &p=2, \ i=1, \ j>0, \ d=-1\\
            d^{p^j}\in U_{v+j},
            &p=2, \ i=1, \ j>0, \ d\in -U_v
        \end{cases}
    \end{equation*}
	When additionally $d \not\in U_{i+1}$, then we have $d^{p^j} \not\in U_{i+j+1}$ in the first case; likewise if $d \not\in -U_{v+1}$ in the last case, then $d^{p^j} \not\in U_{v+j+1}$.
\end{lemma}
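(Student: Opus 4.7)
The plan is to induct on $j$, with $j = 0$ immediate from $d \in U_i$.

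For the first case with $j \geq 1$, I would use the inductive hypothesis to write $d^{p^{j-1}} = 1 + p^{i+j-1} a$ for some $a \in \Z$ (with $p \nmid a$ in the strict-exclusion version of the claim), and expand
\begin{equation*}
d^{p^j} = (1 + p^{i+j-1} a)^p = 1 + p^{i+j} a + \sum_{k=2}^{p} \binom{p}{k}\, p^{k(i+j-1)} a^k.
\end{equation*}
The task is to verify that every summand with $k \geq 2$ has $p$-adic valuation $\geq i + j$, and in fact $> i + j$ when $p \nmid a$. Using the standard facts $v_p(\binom{p}{k}) = 1$ for $1 \leq k \leq p - 1$ and $v_p(\binom{p}{p}) = 0$, the only non-trivial estimate is for $k = p$, where the required inequality reduces to $(p-1)(i+j) \geq p + 1$. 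This holds in exactly the situations permitted by the first-case hypotheses: for $p > 2$ it needs only $i + j \geq 2$ (automatic from $j \geq 1$), and for $p = 2$ it needs $i + j \geq 3$ (automatic from $i > 1$ and $j \geq 1$). The non-strict statement $d^{p^j} \in U_{i+j}$ and the strict exclusion $d^{p^j} \not\in U_{i+j+1}$ (when $p \nmid a$) then both follow, and the inductive hypothesis $p \nmid a$ is preserved for the next step because the $k = 1$ term has valuation exactly $i + j$.

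The second case is immediate, since $(-1)^{2^j} = 1$ for $j \geq 1$. For the third case, the plan is to reduce to the first case via $c := -d$: the hypothesis $d \in -U_v$ translates to $c \in U_v$, and since $2^j$ is even for $j \geq 1$ one has $d^{2^j} = c^{2^j}$, so the conclusions for $d$ follow from those for $c$. The only subtlety is that the first case applied to $c$ (with $p = 2$) requires $v \geq 2$. However, this is automatic: if $d \in -U_1 \setminus -U_2$, then $d \equiv 1 \pmod 4$, so $d$ lies in $U_2$ and the first case (with $i$ replaced by $2$) directly gives a conclusion strictly stronger than the third case claims.

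The main technical hurdle is controlling the $k = p$ term of the binomial expansion when $p = 2$: there the binomial coefficient contributes no extra $p$-adic valuation, which is precisely why the restriction $i > 1$ is needed in the first case. Once this term is under control, the remainder is bookkeeping with $p$-adic valuations in the induction, plus the clean sign reduction in the third case.
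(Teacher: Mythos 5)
Your argument is correct and it is the natural one for a result of this type: a direct binomial expansion with $p$-adic valuation bookkeeping for the first branch (with the $k=p$ term of the expansion being the only term that requires the hypotheses on $p$, $i$, $j$), the trivial observation $(-1)^{2^j}=1$ for the second, and the sign flip $c=-d$ to reduce the third branch to the first.

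One imprecision worth flagging concerns your handling of the edge case $v = 1$ in the third branch. You observe that $d \in -U_1 \setminus -U_2$ forces $d \equiv 1 \pmod 4$, i.e., $d \in U_2$, and you claim the first case applied with $i=2$ ``gives a conclusion strictly stronger than the third case claims.'' For the non-strict inclusion this is right, since $d^{2^j}\in U_{j+2}\subset U_{j+1}$. But for the strict exclusion the two conclusions do not nest --- they actually contradict each other: the first case with $i = 2$ gives $d^{2^j} \in U_{j+2}$, while the third-case exclusion with $v = 1$ would read $d^{2^j} \notin U_{v+j+1} = U_{j+2}$. In fact the third-case exclusion is simply \emph{false} when $v=1$: with $d=5$ and $j=1$ one has $d \notin -U_2$ and yet $d^2 = 25 = 1 + 2^3\cdot 3 \in U_3 = U_{j+2}$. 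The resolution is that the third branch is only ever meant to be invoked with $v \ge 2$ (equivalently, $d\equiv -1 \pmod 4$ and $d\neq -1$), which is exactly how it is used elsewhere in the paper (Lemma \ref{le:phiII}, Proposition \ref{pr:indecompII}). Once you restrict to $v \ge 2$, your $c = -d$ reduction applies verbatim, since then $c\in U_v$ with $v>1$ and the first-case machinery, strict exclusion included, carries over via $d^{2^j}=c^{2^j}$. So the mathematics is fine; the only needed correction is to replace the ``strictly stronger'' remark with the observation that the $v=1$ possibility falls outside the intended scope of the third branch, being already subsumed by the first with $i\ge 2$.
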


Of central importance in much of this paper are the family of operators from $\Z G$ defined by $$P(i,j) = \sum_{k=0}^{p^{i-j}-1}\left(\sigma^{p^j}\right)^k \quad \text{ for } 0 \leq j \leq i \leq n.$$  Note that for $\gamma \in K_i^\times$ we have $\gamma^{P(i,j)} = N_{K_i/K_j}(\gamma)$, and furthermore that $$P(i,j) = (\sigma-1)^{p^i-p^j} \quad \pmod{p\Z G}.$$ 

In the following result, we analyze the image of a polynomial $P(i,j)$ under a certain evaluation map.  Specifically, for a number $d \in U_1$ the function $\phi_d:\Z G \to \Z$ is induced by $\sigma^t \mapsto d^t$ for  $0 \leq t<p^n$.  (In the case where $d^{p^n} = 1 \pmod{p^m}$, note that this induces a map $\phi_d^{(m)}:R_mG \to R_m$ which has kernel $\langle \sigma-d\rangle$.)

\begin{lemma}[{\cite[Lem.~2.3]{MSS2a}}]\label{le:phiII}
    For $0\le j\leq i\le n$
    \begin{equation*}
       \phi_d(P(i,j)) = 0 \pmod{p^{i-j}}.
    \end{equation*}

    More precisely,
    \begin{enumerate}
        \item\label{it:phi2II} If $p>2$, $d\in U_2$, or $j>0$, then
          for $0\le j<i$
        \begin{equation*}
            \phi_d(P(i,j)) = p^{i-j} \pmod{p^{i-j+1}}.
        \end{equation*}
        \item\label{it:phidm1II} If $p=2$, $d=-1$, $j=0$, and $i>0$, then
        \begin{equation*}
          \phi_d(P(i,0)) = 0.
        \end{equation*}
        \item\label{it:phi3II} If $p=2$, $d\in -U_v\setminus -U_{v+1}$ for $v\ge 2$, $j=0$, and
          $i>0$, then
        \begin{equation*}
            \phi_d(P(i,0)) = 2^{i+v-1} \pmod{2^{i+v}}.
        \end{equation*}
    \end{enumerate}
\end{lemma}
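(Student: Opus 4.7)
The plan is to compute $\phi_d(P(i,j))$ directly as a sum of powers of $d$. Using that $\phi_d$ is the ring homomorphism $\sigma\mapsto d$ and setting $e:=d^{p^j}$, I have
\[
\phi_d(P(i,j)) \;=\; \sum_{k=0}^{p^{i-j}-1} e^k.
\]
The case $j=i$ is immediate since $P(i,i)=1$, so I may assume $j<i$. The main tool will be the base-$p$ factorization
\[
\sum_{k=0}^{p^r-1} e^k \;=\; \prod_{l=0}^{r-1} g_l, \qquad g_l := 1 + e^{p^l} + e^{2p^l} + \cdots + e^{(p-1)p^l},
\]
obtained by expressing each $k$ in base $p$. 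The core of the argument will show that in the generic setting of case~(1), each factor satisfies $g_l\equiv p\pmod{p^2}$, so that the full product lies in $p^{i-j}+p^{i-j+1}\Z$. The special cases~(2) and~(3) I will resolve by direct computation.

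For case~(1), the binomial identity
\[
g_l = p + \binom{p}{2}(e^{p^l}-1) + \binom{p}{3}(e^{p^l}-1)^2 + \cdots + (e^{p^l}-1)^{p-1}
\]
reduces matters to controlling $v_p(e^{p^l}-1)$. Lemma~\ref{le:upowerII} yields $v_p(e^{p^l}-1)\ge 1$ in every sub-situation of case~(1), and in fact $\ge 2$ when $p=2$: the hypothesis $d\in U_2$ forces $e\in U_{2+j}$; if instead $j>0$, then either $d=-1$ (in which case $e=1$ and $g_l=p$ exactly) or $d\in -U_v\setminus -U_{v+1}$ with $v\ge 2$ (in which case $e\in U_{v+j}$ with $v+j\ge 3$). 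Combined with $v_p\!\binom{p}{k}\ge 1$ for $1\le k\le p-1$ and $(p-1)v_p(e^{p^l}-1)\ge 2$, every term in the display beyond the leading $p$ has valuation $\ge 2$, so $g_l\equiv p\pmod{p^2}$ and the product gives $\phi_d(P(i,j))\equiv p^{i-j}\pmod{p^{i-j+1}}$. Case~(2) is then immediate: $\sum_{k=0}^{p^i-1}(-1)^k=0$ when $i\ge 1$. For case~(3) I will use the closed form $\phi_d(P(i,0))=(d^{2^i}-1)/(d-1)$. Lemma~\ref{le:upowerII} applied to $-d\in U_v\setminus U_{v+1}$ gives $d^{2^i}-1=(-d)^{2^i}-1\equiv 2^{v+i}u\pmod{2^{v+i+1}}$, where $u:=((-d)-1)/2^v$ is odd, while $d-1=-2(1+2^{v-1}u)$ has valuation $1$ with unit part $\equiv 1\pmod 2$. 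Dividing, $\phi_d(P(i,0))\equiv -2^{v+i-1}u\equiv 2^{v+i-1}\pmod{2^{v+i}}$, since $-u\equiv 1\pmod 2$. The initial divisibility claim $\phi_d(P(i,j))\equiv 0\pmod{p^{i-j}}$ then follows from the three refined statements (noting that in case~(3), $v+i-1\ge i+1\ge i-j$).

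The main obstacle will be threading the $p=2$ bookkeeping cleanly: Lemma~\ref{le:upowerII} fragments at $p=2$ into regimes keyed on whether $d$ lies in $U_2$, equals $-1$, or sits in $-U_v\setminus -U_{v+1}$ for $v\ge 2$, and each of these sub-situations must be compatible with the ``generic'' product argument driving case~(1). A second subtlety arises in case~(3), where pinning down the leading digit --- not merely the $2$-adic valuation --- of $\phi_d(P(i,0))$ requires combining the extra sign in $d-1=-2(1+2^{v-1}u)$ with the oddness of $u$ to turn $-u$ into $1\pmod 2$; without this step one would only recover that the leading term is some odd multiple of $2^{v+i-1}$, rather than exactly $2^{v+i-1}$.
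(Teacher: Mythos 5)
This lemma is imported without proof: the paper cites it as {\cite[Lem.~2.3]{MSS2a}} and never reproves it, so there is no in-paper argument to compare against. Judged on its own, your proof is correct. The base-$p$ factorization $\sum_{k=0}^{p^{i-j}-1}e^k=\prod_{l=0}^{i-j-1}g_l$ is exact, the binomial display $g_l=p+\binom{p}{2}(e^{p^l}-1)+\cdots+(e^{p^l}-1)^{p-1}$ is the correct cyclotomic rewriting, and your appeal to Lemma~\ref{le:upowerII} does deliver $v_p(e^{p^l}-1)\ge 1$ throughout case~(1) and $\ge 2$ when $p=2$, which is exactly what is needed for each factor to satisfy $g_l\equiv p\pmod{p^2}$ (noting $(p-1)\cdot 1\ge 2$ when $p\ge 3$ and $(p-1)\cdot 2\ge 2$ when $p=2$). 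Multiplying the $i-j$ factors gives $p^{i-j}\prod(1+pc_l)\equiv p^{i-j}\pmod{p^{i-j+1}}$. Case~(2) is the alternating sum with an even number of terms. Case~(3) is also handled correctly: writing $-d=1+2^vu$ with $u$ odd, the binomial expansion gives $v_2((-d)^{2^i}-1)=v+i$ exactly (the $\binom{2^i}{2}2^{2v}u^2$ term has valuation $i-1+2v>i+v$ since $v\ge 2$), while $v_2(d-1)=1$, and the quotient of the odd unit parts is odd, which is what pins down the leading digit. Two small stylistic quibbles: you never quite spell out why the unit-part quotient is an integer (it is, because $\phi_d(P(i,0))\in\Z$ and the denominator's unit part is coprime to $2$), and the closing remark about ``combining the extra sign\ldots to turn $-u$ into $1\pmod 2$'' overstates the subtlety, since for $p=2$ the leading digit is automatic once the exact $2$-adic valuation is known. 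Neither affects correctness.
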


The next few lemmas give some basic properties of $R_mG$-modules.  Before we state them, note that for an $R_mG$-module $M$ there is an associated trivial $R_mG$-submodule $M^\star$ given by
\begin{equation*}
    M^\star := \ann_{M} \langle \sigma-1,p\rangle = \ann_M (\sigma-1) \cap \ann_M p.
\end{equation*}
Since $M^\star$ depends on $\ann_M (\sigma-1)$ and $\ann_M p$, we first describe these annihilators in the case where $M = R_mG_i$.

\begin{lemma}[{\cite[Lem.~2.5]{MSS2a}}]\label{le:kerbasicII}
    Suppose $m\in \N$.
    For $0\le i\le n$ and $0\le k\le m$,
    \begin{equation*}
        \ann_{R_mG_i} p^k = \langle p^{m-k}\rangle.
    \end{equation*}

    For $0\le j<i\le n$ and $0\le k\le m$,
    \begin{equation*}
        \ann_{R_mG_i} p^k(\sigma^{p^j}-1)= \langle P(i,j),
        p^{m-k}\rangle.
    \end{equation*}
\end{lemma}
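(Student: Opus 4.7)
The approach will treat $R_mG_i$ as the quotient ring $R_m[\sigma]/(\sigma^{p^i}-1)$ and repeatedly exploit the factorization $\sigma^{p^i}-1 = (\sigma^{p^j}-1)P(i,j)$ together with the fact that both $\sigma^{p^j}-1$ and $P(i,j)$ are monic (hence non-zero-divisors) in $R_m[\sigma]$. Part 1 is immediate: since $R_mG_i$ is a free $R_m$-module on $\{\sigma^t : 0\le t<p^i\}$, an element $\sum a_t\sigma^t$ is killed by $p^k$ precisely when each $a_t\in p^{m-k}R_m$, so $\ann_{R_mG_i} p^k = p^{m-k}R_mG_i = \langle p^{m-k}\rangle$.

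For part 2, the containment $\langle P(i,j), p^{m-k}\rangle \subseteq \ann_{R_mG_i}\bigl(p^k(\sigma^{p^j}-1)\bigr)$ is immediate from $p^{m-k}\cdot p^k = 0$ in $R_m$ and $P(i,j)(\sigma^{p^j}-1) = \sigma^{p^i}-1 = 0$ in $R_mG_i$. For the reverse inclusion, I first handle the base case $k=0$: if $f(\sigma^{p^j}-1) = 0$ in $R_mG_i$, then lifting to $R_m[\sigma]$ gives $f(\sigma^{p^j}-1) = h(\sigma^{p^j}-1)P(i,j)$ for some $h$, and cancelling the monic factor $\sigma^{p^j}-1$ yields $f = hP(i,j)$, whose image in $R_mG_i$ lies in $\langle P(i,j)\rangle$. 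This establishes $\ann_{R_mG_i}(\sigma^{p^j}-1) = \langle P(i,j)\rangle$.

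To bootstrap to general $k$, observe that $f\cdot p^k(\sigma^{p^j}-1) = 0$ forces $p^k f$ to annihilate $\sigma^{p^j}-1$, so by the previous step $p^k f \in \langle P(i,j)\rangle$. Now pass to the quotient $Q := R_mG_i/\langle P(i,j)\rangle$: because $\sigma^{p^i}-1$ is divisible by $P(i,j)$, we have $Q \cong R_m[\sigma]/(P(i,j))$, and since $P(i,j)$ is monic of degree $p^i-p^j$, this $Q$ is a free $R_m$-module on $\{\sigma^t : 0\le t<p^i-p^j\}$. The same reasoning used for part 1 then gives $\ann_Q p^k = p^{m-k}Q$, so the image of $f$ lies in $p^{m-k}Q$, giving $f \in p^{m-k}R_mG_i + \langle P(i,j)\rangle = \langle P(i,j), p^{m-k}\rangle$.

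The only real subtlety is making sure the cancellation and quotient steps are performed in the correct ambient ring; everything else is driven by the monicity of $\sigma^{p^j}-1$ and $P(i,j)$, which simultaneously supplies the non-zero-divisor property in $R_m[\sigma]$ and the $R_m$-freeness of $R_m[\sigma]/(P(i,j))$. With these in hand, the proof of part 2 reduces to applying the freeness argument of part 1 twice: once inside $R_mG_i$ to pin down $\ann(\sigma^{p^j}-1)$, and once inside $Q$ to promote this from $k=0$ to arbitrary $k$.
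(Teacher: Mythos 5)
Your proof is correct. Part 1 follows immediately from the $R_m$-freeness of $R_mG_i$. For Part 2, the easy containment is clear; for the reverse, your $k=0$ base case correctly exploits that $\sigma^{p^j}-1$ is monic in $R_m[\sigma]$ (hence a non-zero-divisor), letting you cancel from the lifted equation $\tilde f(\sigma^{p^j}-1)=h(\sigma^{p^j}-1)P(i,j)$. The bootstrap then passes to $Q=R_mG_i/\langle P(i,j)\rangle\cong R_m[\sigma]/(P(i,j))$, which is legitimate since $\sigma^{p^i}-1\in(P(i,j))$, and uses the fact that $Q$ is again free over $R_m$ (monicity of $P(i,j)$) to apply Part 1's argument. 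Each step is sound.

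The paper itself defers the proof to \cite[Lem.~2.5]{MSS2a}, so there is no in-paper argument to compare against directly. That said, your route --- monic polynomials as non-zero-divisors in $R_m[\sigma]$, combined with $R_m$-free quotients to reduce the general $p^k$ case to the $p$-power annihilator computation --- is a clean, self-contained argument that requires nothing beyond the factorization $\sigma^{p^i}-1=(\sigma^{p^j}-1)P(i,j)$.
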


The previous result lets us conclude the following.

\begin{lemma}[{\cite[Lem.~3.4]{MSS2a}}]\label{le:starrmgiII}
    For $m\in \N$ and $0\le i\le n$,
    \begin{equation*}
        (R_mG_i)^\star = \langle p^{m-1}P(i,0)\rangle = \langle p^{m-1}(\sigma-1)^{p^i-1}\rangle \neq \{0\}.
    \end{equation*}
\end{lemma}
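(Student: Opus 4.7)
The plan is to unpack the definition $(R_mG_i)^\star = \ann_{R_mG_i}(\sigma-1)\cap\ann_{R_mG_i}(p)$ and apply Lemma \ref{le:kerbasicII} to each factor, then reduce the resulting intersection to a single principal ideal.

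First I would invoke Lemma \ref{le:kerbasicII} with $k=1$: $\ann_{R_mG_i}(p) = \langle p^{m-1}\rangle$, and (when $i\geq 1$) $\ann_{R_mG_i}(\sigma-1) = \langle P(i,0),\ p^{m-1}\rangle$ by taking $j=0$. Any element of the first annihilator can be written as $p^{m-1}f$ for some $f\in R_mG_i$, and since $p\cdot p^{m-1}=0$ in $R_m$ we may regard $f$ as living in $\F_pG_i$. The condition that $p^{m-1}f$ also annihilate $\sigma-1$ is then equivalent to $(\sigma-1)f = 0$ in $\F_pG_i$. Because $\F_pG_i \cong \F_p[\sigma-1]/\langle(\sigma-1)^{p^i}\rangle$, the annihilator of $\sigma-1$ inside $\F_pG_i$ is precisely $\langle(\sigma-1)^{p^i-1}\rangle$, so the intersection equals $\langle p^{m-1}(\sigma-1)^{p^i-1}\rangle$ in $R_mG_i$.

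Next I would use the identification $P(i,0)\equiv(\sigma-1)^{p^i-1}\pmod{p\Z G}$, stated in the discussion just before Lemma \ref{le:phiII}. Multiplying by $p^{m-1}$ kills the $p$-part of the discrepancy, giving $p^{m-1}P(i,0)=p^{m-1}(\sigma-1)^{p^i-1}$ in $R_mG_i$; this gives both advertised generators of $(R_mG_i)^\star$. For non-vanishing I would expand $(\sigma-1)^{p^i-1}$ in the $R_m$-basis $\{1,\sigma,\dots,\sigma^{p^i-1}\}$ of $R_mG_i$: the coefficient of $\sigma^{p^i-1}$ is $1$, so the coefficient of $\sigma^{p^i-1}$ in $p^{m-1}(\sigma-1)^{p^i-1}$ is $p^{m-1}$, which is nonzero in $R_m$.

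The only edge case is $i=0$, where $G_0$ is trivial and $\sigma-1=0$, $(\sigma-1)^{p^0-1}=1$, $P(0,0)=1$; in that case the formula reduces to $(R_mG_0)^\star = \langle p^{m-1}\rangle = \ann_{R_m}(p)$, which is immediate from the first clause of Lemma \ref{le:kerbasicII}. I don't expect a genuine obstacle in this proof — the argument is essentially a bookkeeping exercise combining Lemma \ref{le:kerbasicII} with the mod-$p$ identification of $P(i,0)$ — but care must be taken with the intersection of the two ideal descriptions, since naively taking generators from each would overcount; the trick is to first reduce to the simpler annihilator $\ann(p)$ and then impose the second condition.
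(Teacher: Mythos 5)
Your argument is correct and self-contained given Lemma~\ref{le:kerbasicII}; since the paper does not reprove this lemma (it cites \cite[Lem.~3.4]{MSS2a}), there is no in-paper proof to compare against, but the route you take --- first restricting to $\ann_{R_mG_i}(p) = \langle p^{m-1}\rangle$, identifying that ideal with $\F_pG_i$, and then imposing the $\sigma-1$ condition inside $\F_p[\sigma-1]/\langle(\sigma-1)^{p^i}\rangle$ --- is exactly the natural way to avoid the overcounting pitfall you flag, and the mod-$p$ identification $P(i,0)\equiv(\sigma-1)^{p^i-1}$ together with the degree count on $\sigma^{p^i-1}$ correctly establishes both the equality of generators and the nonvanishing.

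One small slip worth noting: you wrote $\ann_{R_mG_i}(\sigma-1) = \langle P(i,0), p^{m-1}\rangle$ ``by taking $j=0$,'' but Lemma~\ref{le:kerbasicII} with $j=0$ and $k=0$ gives $\ann_{R_mG_i}(\sigma-1) = \langle P(i,0), p^{m}\rangle = \langle P(i,0)\rangle$; the version with $p^{m-1}$ is $\ann_{R_mG_i}(p(\sigma-1))$. This does not affect your proof, since your actual intersection computation never uses the explicit generators of $\ann(\sigma-1)$ --- you work entirely from $\ann(p)$ and impose the $(\sigma-1)$-annihilation condition afterward --- but it should be corrected for the record.
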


\begin{lemma}[{\cite[Lem.~3.5]{MSS2a}}]\label{le:idealrmgiII}
    For $m\in \N$ and $0\le i\le n$, every nonzero ideal of $R_mG_i$
    contains
    \begin{equation*}
        p^{m-1}(\sigma-1)^{p^i-1}
        = p^{m-1}P(i,0).
    \end{equation*}
\end{lemma}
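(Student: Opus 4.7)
The plan is to pick an arbitrary nonzero element $x$ of a nonzero ideal $I \subseteq R_mG_i$ and exhibit $p^{m-1}(\sigma-1)^{p^i-1}$ as an explicit $R_mG_i$-multiple of $x$. Using the unique representation mentioned in the introduction, write
\begin{equation*}
    x = \sum_{a=0}^{m-1}\sum_{b=0}^{p^i-1} c_{a,b}\, p^a (\sigma-1)^b, \quad c_{a,b}\in\{0,1,\dots,p-1\}.
\end{equation*}
Let $(a_0,b_0)$ be the lexicographically smallest pair (first $a$, then $b$) with $c_{a_0,b_0}\ne 0$. The candidate multiplier I would use is $\lambda := p^{m-1-a_0}(\sigma-1)^{p^i-1-b_0}$, and my goal is to show that $\lambda x$ equals $p^{m-1}(\sigma-1)^{p^i-1}$ up to a unit of $R_m$.

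Expanding $\lambda x$, I would argue that every term collapses except the contribution from $(a_0,b_0)$. Terms with $a < a_0$ vanish by the minimality of $a_0$. Terms with $a > a_0$ pick up the factor $p^{m-1+(a-a_0)}$, which is already zero in $R_m$. This leaves only the "row" $a = a_0$, giving
\begin{equation*}
    \lambda x = p^{m-1}\sum_{b=b_0}^{p^i-1} c_{a_0,b}\,(\sigma-1)^{p^i-1-b_0+b}.
\end{equation*}
The $b = b_0$ term produces exactly $c_{a_0,b_0}\, p^{m-1}(\sigma-1)^{p^i-1}$, and terms with $b < b_0$ vanish by minimality of $b_0$. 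Since $c_{a_0,b_0}\in\{1,\dots,p-1\}$ is a unit of $R_m$, once the higher-$b$ terms are shown to vanish the result follows by multiplying by $c_{a_0,b_0}^{-1}$.

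The main obstacle is therefore establishing that the surviving terms with $b > b_0$, which carry factors $(\sigma-1)^{p^i+s}$ for $s \ge 0$, are zero in $p^{m-1}R_mG_i$. For this I would invoke the standard fact that $\binom{p^i}{k} \equiv 0 \pmod{p}$ for $1 \le k \le p^i-1$. Writing $T = \sigma - 1$ and using the relation $(1+T)^{p^i} = 1$ in $R_mG_i$ gives
\begin{equation*}
    T^{p^i} = -\sum_{k=1}^{p^i-1}\binom{p^i}{k} T^k \in p\, R_mG_i.
\end{equation*}
Consequently $p^{m-1}(\sigma-1)^{p^i+s} \in p^m R_mG_i = 0$ for every $s\ge 0$, which removes exactly the terms that need to vanish. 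This last observation is the crux: it is what distinguishes $R_mG_i$ from a generic truncated polynomial ring, and it is precisely what pins down the socle at $p^{m-1}(\sigma-1)^{p^i-1}$ rather than something strictly smaller. With this in hand, $\lambda x = c_{a_0,b_0}\, p^{m-1}(\sigma-1)^{p^i-1} \in I$, and the lemma follows.
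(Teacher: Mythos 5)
Your proof is correct. The expansion in the canonical form $\sum c_{a,b}\,p^a(\sigma-1)^b$, the choice of the lexicographically minimal $(a_0,b_0)$, the multiplier $\lambda = p^{m-1-a_0}(\sigma-1)^{p^i-1-b_0}$, and the key observation that $(\sigma-1)^{p^i}\in p\,R_mG_i$ (via $\sigma^{p^i}=1$ and $p\mid\binom{p^i}{k}$ for $1\le k\le p^i-1$, so that $p^{m-1}(\sigma-1)^{p^i+s}=0$) all fit together cleanly, and the unit $c_{a_0,b_0}$ lets you recover $p^{m-1}(\sigma-1)^{p^i-1}$ itself, which equals $p^{m-1}P(i,0)$ since $P(i,0)\equiv(\sigma-1)^{p^i-1}\pmod{p}$.

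This is, however, a genuinely different route from the one the surrounding lemmas indicate the cited source takes. The paper states, just before this lemma, that $(R_mG_i)^\star=\langle p^{m-1}P(i,0)\rangle\neq\{0\}$ (Lemma~\ref{le:starrmgiII}) and that every nonzero $R_mG$-module has nonzero $\star$ (Lemma~\ref{le:starzeroII}); together these give a two-line socle argument: for a nonzero ideal $I$, one has $\{0\}\neq I^\star\subseteq(R_mG_i)^\star$, and since $(R_mG_i)^\star\simeq\Fp$ is simple this forces $I\supseteq(R_mG_i)^\star\ni p^{m-1}P(i,0)$. Your computational argument is self-contained (it does not rely on the prior socle computation or the nonvanishing-socle lemma) and it produces an explicit multiplier $\lambda$ carrying any nonzero $x$ to the socle generator; the socle argument is shorter given the ambient machinery and makes the structural reason (a local Artinian ring with one-dimensional socle) more visible. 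Both are valid proofs.
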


The importance of $M^\star$ in understanding a general $R_mG$-module $M$ is outlined in the following results.

\begin{lemma}[{\cite[Lem.~3.2]{MSS2a}}]\label{le:starzeroII}
    Let $M$ be a nonzero $R_mG$-module.  Then $M^\star\neq \{0\}$.
\end{lemma}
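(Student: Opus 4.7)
My plan is to exhibit a nonzero element of $M^\star$ directly, by exploiting the nilpotence of the ideal $\mathfrak{m} := \langle \sigma - 1, p\rangle \subseteq R_mG$.

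The first step will be to verify that nilpotence. The generator $p$ is nilpotent because $p^m = 0$ in $R_m$. For $\sigma - 1$, reducing modulo $p$ gives $(\sigma-1)^{p^n} \equiv \sigma^{p^n} - 1 = 0 \pmod{p \Z G}$, so $(\sigma-1)^{p^n} = p h$ for some $h \in \Z G$, whence $(\sigma-1)^{m p^n} = p^m h^m = 0$ in $R_mG$. Since $\mathfrak{m}$ is generated by two commuting nilpotent elements, a short pigeonhole argument shows $\mathfrak{m}^N = 0$ for some $N \in \N$ (for instance $N = mp^n + m$ works).

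With nilpotence in hand, I would pick any nonzero $x \in M$ and consider the descending chain of submodules $\langle x\rangle \supseteq \mathfrak{m} \cdot x \supseteq \mathfrak{m}^2 \cdot x \supseteq \cdots$. Nilpotence forces this chain to terminate at $\{0\}$, so there is a largest integer $k \geq 0$ with $\mathfrak{m}^k \cdot x \neq \{0\}$. Choosing any nonzero $y \in \mathfrak{m}^k \cdot x$, both $(\sigma-1) y$ and $p y$ lie in $\mathfrak{m}^{k+1} \cdot x = \{0\}$, which forces $0 \neq y \in \ann_M(\sigma-1) \cap \ann_M(p) = M^\star$.

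The only real obstacle is verifying the nilpotence of $\sigma-1$ in $R_mG$; once that is in hand, the rest is a Nakayama/filtration routine, morally reflecting that $R_mG$ is a finite local ring with residue field $\F_p$. An alternative (but slightly clunkier) route would be to pass to a minimal nonzero submodule of the finite cyclic module $\langle x\rangle$ and combine Lemma \ref{le:idealrmgiII} with the observation that $1 - r(\sigma-1)$ and $1 - rp$ are units in $R_mG$ for every $r$, forcing a simple submodule to be annihilated by both $\sigma - 1$ and $p$.
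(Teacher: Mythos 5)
Your proof is correct and takes essentially the same route as the cited source: establish that $\langle \sigma-1, p\rangle$ is a nilpotent ideal of $R_mG$ (equivalently, that $R_mG$ is a finite local ring with this maximal ideal), then extract a nonzero socle element via the terminating filtration $\mathfrak{m}^k \cdot x$. The nilpotence computation $(\sigma-1)^{p^n} \in p\Z G$ and the pigeonhole bound are both sound, and the final step correctly identifies the last nonzero stage of the filtration as lying in $M^\star$.
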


\begin{lemma}[{\cite[Lem.~3.3]{MSS2a}}]\label{le:exclII}
    Let $0\le i\le n$, and let $M_1$ and $M_2$ be $R_mG_i$-submodules contained in a common $R_mG_i$-module.  If $M_1^\star\cap M_2^\star = \{0\}$ then $M_1+M_2 = M_1\oplus M_2$.
\end{lemma}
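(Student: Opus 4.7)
The plan is to reduce the direct-sum claim to showing $M_1 \cap M_2 = \{0\}$, and then to derive a contradiction from the hypothesis $M_1^\star \cap M_2^\star = \{0\}$ using Lemma \ref{le:starzeroII} applied to the intersection.

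First I would note that the statement $M_1 + M_2 = M_1 \oplus M_2$ is equivalent to the assertion that $M_1 \cap M_2 = \{0\}$, since $M_1$ and $M_2$ sit inside a common $R_mG_i$-module. Suppose, for contradiction, that $N := M_1 \cap M_2$ is nonzero. Observe that $N$ is itself an $R_mG_i$-submodule (intersections of submodules are submodules), so Lemma \ref{le:starzeroII} applies and gives $N^\star \neq \{0\}$.

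Next I would verify the containment $N^\star \subseteq M_1^\star \cap M_2^\star$. This is essentially a definitional check: if $x \in N$ satisfies $(\sigma-1)x = 0$ and $px = 0$, then viewing $x$ as an element of $M_1$ shows $x \in M_1^\star$, and viewing $x$ as an element of $M_2$ shows $x \in M_2^\star$. Hence $x \in M_1^\star \cap M_2^\star$. Combining this containment with the hypothesis $M_1^\star \cap M_2^\star = \{0\}$ forces $N^\star = \{0\}$, contradicting the conclusion of Lemma \ref{le:starzeroII} from the previous paragraph. Therefore $N = \{0\}$ and the sum is direct.

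There is no substantive obstacle here: the whole argument is a two-line application of Lemma \ref{le:starzeroII} together with the trivial observation that the star operator respects inclusions of submodules of a common ambient module. The only thing worth flagging is that one must treat $M_1 \cap M_2$ as a submodule in its own right (rather than trying to argue elementwise about decompositions $m = m_1 = m_2$), so that Lemma \ref{le:starzeroII} can be invoked cleanly.
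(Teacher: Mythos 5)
Your argument is correct: reducing to $M_1 \cap M_2 = \{0\}$, applying Lemma~\ref{le:starzeroII} to the intersection, and using the containment $(M_1\cap M_2)^\star \subseteq M_1^\star \cap M_2^\star$ is the natural (and essentially forced) route. The paper cites this lemma from \cite{MSS2a} rather than proving it in-text, but your proof is sound and is the expected argument; note only that Lemma~\ref{le:starzeroII} is stated for $R_mG$-modules, which applies here since every $R_mG_i$-module inherits an $R_mG$-module structure via the quotient $G \twoheadrightarrow G_i$.
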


Finally, we state a result which gives conditions (on $\mathbf{a}$ and $d$) under which the module $X_{\mathbf{a},d,m}$ is indecomposable.

\begin{proposition}[{\cite[Th.~1]{MSS2a}}]\label{pr:indecompII}
    Suppose $m\in \N$,
    \begin{equation*}
        \mathbf{a}=(a_0,\dots,a_{m-1}) \in
        \{-\infty,0,\dots,n\}^{m}
    \end{equation*}
    and $d\in \Z$ satisfy
    \begin{enumerate}
        \renewcommand{\theenumi}{\Roman{enumi}}
        \item\label{it:exc.mod.indecom.condition...d.in.U1II} $d\in U_1$
        \item\label{it:exc.mod.indecom.condition...power.of.dII} $d^{p^{a_i}}\in U_{i+1}$ for all $0\le i<m$
        \item\label{it:exc.mod.indecom.condition...a0.boundsII} if $p=2$ and $n=1$, then $a_0 = -\infty$
        \item\label{it:exc.mod.indecom.condition...ai.vs.aj.inequalityII}
          $a_i+j<a_{i+j}$ for all $0\le i<m$ with  $1\le j<
          m-i$ and $a_{i+j}\neq -\infty$, except if $p=2$, $d\not\in
          U_2$, $i=0$, and $a_i=0$, in which case $a_j\neq 0$ for all
          $1\le j<m$
        \item\label{it:exc.mod.indecom.condition...ai.vs.v.inequalityII}
          If $p=2$, $m\ge 2$, $d\in -U_v\setminus -U_{v+1}$ for
          some $v\ge 2$, and $a_0=0$, then $a_{i}>i-(v-1)$ for
          all $v\le i<m$ and $a_i\neq -\infty$.
    \end{enumerate}
    Then $X_{\mathbf{a},d,m}$ is an indecomposable $R_mG$-module.
\end{proposition}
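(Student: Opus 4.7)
The plan is to prove indecomposability by computing the trivial submodule $X^\star = \ann_X(\sigma-1) \cap \ann_X(p)$ of $X = X_{\mathbf{a},d,m}$ and showing that it is cyclic as an $R_m$-module. The payoff is immediate: if $X = M_1 \oplus M_2$ is any direct sum decomposition, then $X^\star = M_1^\star \oplus M_2^\star$, and by Lemma \ref{le:starzeroII} each $M_i^\star$ is nonzero whenever $M_i$ is. Since a cyclic $R_m$-module is indecomposable as an $R_m$-module (and hence cannot split into two nonzero pieces), one of the $M_i$ must be zero, and $X$ is indecomposable.

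First I would identify natural generators of $X^\star$: the elements $p^{m-1}P(a_i,0)\,x_i$ for each $i$ with $a_i \neq -\infty$ (each annihilated by $\sigma-1$ because $(\sigma-1)P(a_i,0) = \sigma^{p^{a_i}}-1$ kills $x_i$, and annihilated by $p$ because of the $p^{m-1}$ factor), together with a further generator derived from $y$. This extra generator is produced by multiplying the defining relation $(\sigma-d)y = \sum p^i x_i$ by an operator such as $p^{m-1}P(n,0)$ and using Lemma \ref{le:phiII} to evaluate $\phi_d(P(n,0))$ modulo $p^m$: since $\sigma$ acts as multiplication by $d$ on the norm image, this expresses a nonzero scalar times a ``$y$-related'' element of $X^\star$ as a $\Z$-linear combination of the $p^{m-1}P(a_i,0)\,x_i$.

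The next step is to show that all of these generators are $R_m$-multiples of a single fixed one. Condition (\ref{it:exc.mod.indecom.condition...power.of.dII}), combined with Lemma \ref{le:phiII}, pins down the $p$-adic valuation of $\phi_d(P(a_i,0))$ to be exactly $a_i$ (with the standard adjustment in the $p=2$ case), so that the contributions of distinct $x_i$ to the relation forced by the $y$-equation are comparable in a controlled way. The strict inequality condition (\ref{it:exc.mod.indecom.condition...ai.vs.aj.inequalityII}) $a_i+j < a_{i+j}$ then enforces a strict hierarchy on the annihilators of the $p^{m-1}P(a_i,0)\,x_i$: each is a proper $p$-multiple of the one before it, preventing any two of them from being $R_m$-independent in $X^\star$. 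Condition (\ref{it:exc.mod.indecom.condition...ai.vs.v.inequalityII}) carries out the analogous rigidification in the exceptional $p=2$, $d \in -U_v$ case, while conditions (\ref{it:exc.mod.indecom.condition...d.in.U1II}) and (\ref{it:exc.mod.indecom.condition...a0.boundsII}) handle structural and degenerate boundary cases.

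The main obstacle will be the careful bookkeeping required to show there are no ``hidden'' elements of $X^\star$ beyond the listed ones, and to verify that the chain of $R_m$-multiple relations actually collapses all the generators down to a single one rather than leaving two independent pieces. This should proceed by a Nakayama-style reduction mod $p$ combined with close tracking of how $\sigma$-weight matches with $p$-adic valuation in each of the $P(a_i,0)\,x_i$ terms. The hardest cases will be the $p=2$ special situations governed by parts (\ref{it:phidm1II}) and (\ref{it:phi3II}) of Lemma \ref{le:phiII}, where the evaluation $\phi_d(P(i,0))$ has an anomalous $2$-adic valuation; condition (\ref{it:exc.mod.indecom.condition...ai.vs.v.inequalityII}) is tailored precisely to block the otherwise-permitted splitting that this anomaly would allow, and checking this exhaustively is what I expect to be the most technical component of the argument.
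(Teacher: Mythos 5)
The paper does not prove this proposition: it is quoted from \cite[Th.~1]{MSS2a}, and the introduction states explicitly that ``the proof of the indecomposability is given in \cite{MSS2a}.'' So there is no internal argument here to compare against, and I will assess your plan on its own terms.

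The load-bearing claim of your plan --- that $X^\star=\ann_X(\sigma-1)\cap\ann_X(p)$ is cyclic over $R_m$, hence at most one-dimensional over $\F_p$, so that a decomposition $X=M_1\oplus M_2$ with both $M_i\ne 0$ is impossible --- is false under conditions \ref{it:exc.mod.indecom.condition...d.in.U1II}--\ref{it:exc.mod.indecom.condition...ai.vs.v.inequalityII}, and this is a genuine gap, not a matter of bookkeeping. Take $p$ odd, $n=3$, $m=2$, $\mathbf{a}=(0,2)$, and $d\in U_2\setminus U_3$: all five conditions hold (condition \ref{it:exc.mod.indecom.condition...ai.vs.aj.inequalityII} reads $0+1<2$, and \ref{it:exc.mod.indecom.condition...power.of.dII} follows from Lemma~\ref{le:upowerII}). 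Since $d\equiv 1\pmod{p^2}$, the defining relation reads $(\sigma-1)y=x_0+px_1$ in $R_2G$, so $px_0=p(\sigma-1)y$; projecting to $X/\langle x_1\rangle\simeq R_2G/\langle(\sigma-1)^2\rangle$, evaluation at $\sigma=1$ shows $p(\sigma-1)\notin\langle(\sigma-1)^2\rangle$, so $px_0$ survives in the quotient, whereas $pP(2,0)x_1$ is a nonzero element of $\langle x_1\rangle\simeq R_2G_2$ that dies there. Both $px_0$ and $pP(2,0)x_1$ are annihilated by $p$ and by $\sigma-1$, so they are two $\F_p$-independent elements of $X^\star$ and $\dim_{\F_p}X^\star\ge 2$. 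This is typical: reducing the presentation mod $p$ and eliminating $x_0=(\sigma-1)y$ gives
\begin{equation*}
X_{\mathbf{a},d,1}\ \simeq\ \F_pG/\langle(\sigma-1)^{p^{a_0}+1}\rangle\ \oplus\ \bigoplus_{i\in S}\F_pG/\langle(\sigma-1)^{p^{a_i}}\rangle, \qquad S=\{\,i\ge 1:a_i\ne -\infty\,\},
\end{equation*}
whose fixed submodule has dimension $1+|S|$, and $X^\star$ has that same dimension. So the step ``a cyclic $R_m$-module cannot split'' is unavailable: $X^\star$ genuinely splits whenever more than one $a_i$ is finite, and the proof must instead rule out that any splitting of the socle lifts to a splitting of $X$ compatible with $(\sigma-d)y=\sum p^i x_i$ across all the $p$-layers. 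Conditions \ref{it:exc.mod.indecom.condition...power.of.dII}, \ref{it:exc.mod.indecom.condition...ai.vs.aj.inequalityII}, \ref{it:exc.mod.indecom.condition...ai.vs.v.inequalityII} are used for exactly that finer obstruction (e.g.\ via the endomorphism ring or an explicit lifting analysis); a socle-dimension count alone cannot finish the argument.
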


\section{Proofs of Theorems~{{\ref{th:p2n1m1notnorm}}} and {{\ref{th:noxip}}}}\label{sec:proofs.in.non.generic.cases}

The proofs of Theorems~\ref{th:p2n1m1notnorm} and \ref{th:noxip} follow relatively quickly from the module structure of $J_1$, because in these cases it's straightforward to account for relations which appear in $J_m$.  We will begin with Theorem \ref{th:noxip} since the proof is simplest in this case.

%Before we proceed with the proofs of these results, we first make a few comments on notation.  We maintain the assumptions of section~\ref{se:np1}, but we begin to write the operation of $K^\times$ additively: we write $0$ for the identity; we use multiplication by $p^l$ to denote exponentiation by $p^l$; and we use multiplication by $1/p^l$ to denote $p^l$-th roots in $K^{\text{sep}\ \times}$. When we write $(p^l)$ to the right of equations we mean the equality modulo $p^l$-th powers in $K^\times$. 

%We write classes in $J_i=K^{\times}/p^iK^\times$ as $\gamma\ (p^i)$ for $\gamma\in K^\times$.  Moreover, to simplify notation we write $\langle \gamma\rangle\ (p^i)$ in place of $\langle \gamma (p^i)\rangle$ to denote the $R_i G$-submodule of $J_i$ generated by the class of $\gamma$, and for $A\subset K^\times$, the notation $A (p^i)$ to denote the image of $A$ in $J_i$.

\begin{proof}[Proof of Theorem~\ref{th:noxip}]
We assume that $F$ contains no primitive $p$th root of unity.  By \cite[Th.~1]{MSS}, we can select elements $\mathcal{K} \subset K^\times$ so that $$J_1 = \bigoplus_{\gamma \in \mathcal{K}} \langle [\gamma]_1\rangle,$$ where for any $\gamma \in \mathcal{K}\cap (K_i^\times\setminus K_{i-1}^\times)$ we have $\langle [\gamma]_1\rangle \simeq \F_pG_i$.  If we let $e_i(\K) = |\K\cap (K_i^\times \setminus K_{i-1}^\times)|$ for $0 \leq i \leq n$, then \cite[Cor.~1]{MSS} gives $e_i(\K) = e_i(K/F)$.

We first show that for each $m\in \N$, the set $\left\{\langle [\gamma]_m\rangle: \gamma\in \K\right\}$ is an independent set, and that for each $\gamma\in \K$ we have $\langle [\gamma]_m\rangle\simeq R_mG_{e(\gamma)}$
where
\begin{equation*}
    e(\gamma)=\min \{i\in\{-\infty, 0,\dots,n\} :
    \gamma\in K_i^\times\}.
\end{equation*}

We proceed by induction.  The case $m=1$  has been handled above. Suppose that for $m \geq 2$ the set of cyclic modules $\left\{\langle [\gamma]_{m-1}\rangle:\gamma\in \K\right\}$ is an independent set, and that for each $\gamma\in \K$ we have $\langle [\gamma]_{m-1}\rangle \simeq R_{m-1}G_{e(\gamma)}$.  We show that the statements hold for $m$ as well.

First consider $\langle [\gamma]_m\rangle$ for $\gamma\in \K$, and --- for simplicity --- write $e=e(\gamma)$. Since $\gamma\in K_{e}^\times$, it follows that $\langle [\gamma]_m \rangle$ is an $R_mG_e$-module.  Since $\langle [\gamma]_{m-1}\rangle$ is a free $R_{m-1}G_e$-module, Lemma~\ref{le:idealrmgiII} gives,
\begin{equation*}
    p^{m-2}P(e,0)[\gamma]_{m-1}\neq [0]_{m-1}.
\end{equation*}
Now suppose $\langle [\gamma]_m \rangle$ is not free as an $R_mG_e$-module. Then by Lemma~\ref{le:idealrmgiII},
\begin{equation*}
    p^{m-1}P(e,0)[\gamma]_m = [0]_m.
\end{equation*}
Hence for some $\beta \in K^\times$ we have
\begin{equation*}
    (\gamma)^{p^{m-1}P(e,0)} = \beta^{p^m}.
\end{equation*}
Observe that because $\xi_p\not\in F^\times$ and $[K:F]=p^n$, we have $\xi_p\notin K^\times$.  Therefore $p$th roots of $p$th powers are uniquely defined in $K^\times$ and we deduce
\begin{equation*}
    (\gamma)^{p^{m-2}P(e,0)} = \beta^{p^{m-1}}.
\end{equation*}
Therefore
\begin{equation*}
    p^{m-2}P(e,0)[\gamma]_{m-1} =  [0]_{m-1},
\end{equation*}
a contradiction.

Now we claim that $\left\{\langle [\gamma]_m \rangle: \gamma \in \K\right\}$ is independent.  By induction the set $\left\{\langle [\gamma]_{m-1}|\rangle:\gamma\in \K\right\}$ is independent. We proceed by induction on the number of modules $\langle [\gamma]_m \rangle$ that we already know to be independent. The base case is clear. Now consider $\langle [\gamma]_m \rangle$ and $\oplus \langle [\gamma_i]_m \rangle$. By Lemma~\ref{le:exclII} it suffices to show that $\langle [\gamma]_m \rangle^\star \cap \oplus \langle [\gamma_i]_m \rangle^\star = \{0\}$.

Suppose the contrary.  We have that each $\langle [\gamma_i]_m\rangle$ is a free
$R_mG_{e(\gamma_i)}$-module.  By Lemma~\ref{le:starrmgiII}
\begin{equation*}
    \langle[\gamma_i]_m\rangle^\star = \langle p^{m-1}P(e(\gamma_i),0)\rangle,
\end{equation*}
and this module is isomorphic to $\Fp$.  Hence we must have a relation
\begin{equation*}
    c_\gamma p^{m-1}P(e,0)[\gamma]_m + \sum c_{\gamma_i}
    p^{m-1}P(e(\gamma_i),0)[\gamma_i]_m = [0]_m,
\end{equation*}
with $c_\gamma$ and $c_{\gamma_i}\in \Z$ and not all divisible by $p$.  Taking $p$th roots, we obtain
\begin{equation*}
    c_\gamma p^{m-2}P(e,0)[\gamma]_{m-1} + \sum c_{\gamma_i}
    p^{m-2}P(e(\gamma_i),0)[\gamma_i]_{m-1} = [0]_{m-1}\quad,
\end{equation*}
contradicting the fact that the modules $\left\{\langle [\gamma]_{m-1}\rangle: \gamma \in \K\right\}$ are independent.

Now for each $i$, let $Y_i$ be the span of $\langle [\gamma]_m\rangle$ for $\gamma\in \K \cap (K_i^\times \setminus K_{i-1}^\times)$. By definition the rank of $Y_i$ is $e_i(\K)$, and we have already argued that $e_i(\K) = e_i(K/F)$.  The inverse limit calculation follows naturally.
\end{proof}

The proof of Theorem~\ref{th:p2n1m1notnorm} relies on variations of the same theme, though the fact that $p=2$, $n=1$ and $-1\not\in N_{K/F}(K^\times)$ will present some technical issues.  

\begin{proof}[Proof of Theorem~\ref{th:p2n1m1notnorm}]
Assume  $p=2$, $\xi_2\in F$, $n=1$, and $-1\not\in N_{K/F}(K^\times)$.  Note that if $\xi_4 \in F$ then $N_{K/F}(\xi_4)=-1$, contrary to hypothesis.  Hence $\xi_4 \not\in F$.

First suppose that $\nu=\infty$; i.e., that $K$ contains all $2$-power roots of unity.  Then we modify the proof
of Theorem~\ref{th:noxip} as follows.  There we use the
fact that $p$th roots of $p$th powers are uniquely defined
(since $\xi_p\not\in K^\times$) to deduce that, for $a, b\in K^\times$ and $m\ge 2$,
\begin{equation*}
    a^{p^{m-1}}=b^{p^m} \implies a^{p^{m-2}}=b^{p^{m-1}}
\end{equation*}
and therefore that
\begin{equation*}
    p^{m-1} [a]_m = [0]_m  \implies p^{m-2} [a]_{m-1} = [0]_{m-1}.
\end{equation*}
We show that this last result still holds in our case.  Suppose that $a^{2^{m-1}}=b^{2^m}$, for some $a, b\in K^\times$, and $m\ge 2$.  Taking square roots, we have $ a^{2^{m-2}} (-1)^c = b^{2^{m-1}}$ for some $c\in \Z$. But since $\nu=\infty$, $(-1)^c=d^{2^{m-1}}$ for some $d\in K^\times$.  Hence $2^{m-1}[a]_m=[0]_m$ implies $2^{m-2}[a]_{m-1}=[0]_{m-1}$, as desired.  The rest of the proof carries over without modification.

Now we suppose that $\nu<\infty$.  Since $\xi_2\in F$ we have $\nu\ge 1$.  If $\nu=1$ set $\lambda=\sqrt{a}$, where $K = F(\root\of{a})$.  If instead $\nu> 1$ then $K=F(\xi_4)$ and we set $f_0=(\xi_{2^{\nu}}+\xi_{2^{\nu}}^\sigma)/2\in F$, $f_1=(\xi_{2^{\nu}}-\xi_{2^{\nu}}^\sigma)/(2\xi_4)\in F$, and $\lambda = f_1+(1-f_0)\xi_4$.  Note that $N_{K/F}(\xi_{2^\nu})$ is a $2^{\nu-1}$ root of unity in $F$, and so $N_{K/F}(\xi_{2^\nu}) = \pm 1$.  But since $-1 \not\in N_{K/F}(K^\times)$ by assumption, we must have $N_{K/F}(\xi_{2^\nu}) = 1$.  In particular, this means that $\xi_{2^\nu}^\sigma = \xi_{2^\nu}^{-1}$.  Observe that $\xi_{2^{\nu}}=f_0+f_1\xi_4$, that $\xi_{2^{\nu}}^{-1}=f_0-f_1\xi_4$, and that $f_0^2+f_1^2=1$.  In either case, we calculate that $\lambda^{\sigma-1} = \xi_{2^{\nu}}^{-1}$.  Observe that $[-1]_m\in \langle \lambda\rangle$ for all $m\in \N$.

Now if $[\xi_{2^{\nu}}]_1^{-1}= [0]_1$ then there exists a primitive $2^{\nu+1}$th root of unity in $K$, contradicting the definition of $\nu$.  Hence $(\sigma-1)[\lambda]_1\neq[0]_1$, and so the $\Ft G$-module $\langle [\lambda]_1 \rangle$ is isomorphic to $\Ft G$.  The proof of \cite[Th.~1]{MSS} (specifically, the first paragraph of \cite[Prop.~3]{MSS}) tells us that we can select $\K \subset K^\times$ with $\lambda \in \K$ and so that $$J_1 = \bigoplus_{\gamma \in \K} \langle[\gamma]_1\rangle,$$ where for any $\gamma \not\in K_0^\times$ we have $\langle[\gamma]_1\rangle$ is a free $\Ft G$-module, and for any $\gamma \in K_0^\times$ we have $\langle [\gamma]_1\rangle$ is a trivial $\Ft G$-module.  

We prove the theorem by showing that for each $m\in \N$ we have:  the set $\left\{\langle [\gamma]_m\rangle:\gamma\in \K\right\}$ is  independent; that $\langle [\lambda]_m\rangle$ has the desired isomorphism type; and that for each $\gamma\in \K\setminus \{\lambda\}$ we have $\langle [\gamma]_m \rangle\simeq R_mG_{e(\gamma)}$ where
\begin{equation*}
    e(\gamma)=\min \{i\in\{-\infty, 0,1\} :
    \gamma\in K_i^\times\}.
\end{equation*}

We proceed by induction; the case $m=1$ follows from the construction of $\K$. Suppose that for some $m\geq 2$ the set $\left\{\langle [\gamma]_{m-1}\rangle:\gamma\in \K\right\}$ is independent, and that for each $\gamma\in \K\setminus\{\lambda\}$ we have $\langle [\gamma]_{m-1}\rangle \simeq R_{m-1}G_{e(\gamma)}$.  We show that the statements hold for $m$ as well.

Consider $\langle [\gamma]_m\rangle$ for $\gamma\in \K\setminus \{\lambda\}$, and --- for simplicity --- write $e=e(\gamma)$. Since $\gamma\in K_{e}^\times$, we know $\langle [\gamma]_m\rangle$ is an $R_mG_e$-module. By induction $\langle [\gamma]_{m-1}\rangle$ is a free $R_{m-1}G_e$-module, and by Lemma~\ref{le:idealrmgiII},
\begin{equation*}
    2^{m-2}P(e,0)[\gamma]_{m-1} \neq [0]_{m-1}.
\end{equation*}
Moreover, $\langle [\gamma]_{m-1}\rangle$ is independent from $\langle [\lambda]_{m-1}\rangle$. Now suppose $\langle [\gamma]_m\rangle$ is not free as an $R_mG_e$-module. Then by Lemma~\ref{le:idealrmgiII},
\begin{equation*}
    2^{m-1}P(e,0)[\gamma]_m = [0]_m.
\end{equation*}
Taking square roots yields
%\begin{equation*}
%    (N_{K_e/F}\gamma)^{2^{m-1}} = \beta^{2^m}, \quad \beta\in K^\times.
%\end{equation*}
%We take square roots, yielding
%\begin{equation*}
%    (N_{K_e/F}\gamma)^{2^{m-2}} = (-1)^c + \beta^{2^{m-1}}
%\end{equation*}
%for some $c\in \Z$. Therefore
\begin{equation*}
    2^{m-2}P(e,0)[\gamma]_{m-1} = c[-1]_{m-1}.
\end{equation*}
for some $c \in \Z$. Now if $c=0$ or $m\le \nu$ then $c[-1]_{m-1}=[0]_{m-1}$ and so $2^{m-2}P(e,0)[\gamma]_{m-1} = [0]_{m-1}$, a contradiction.
Otherwise, $c[-1]_{m-1}\neq [0]_{m-1}$ and yet $c[-1]_{m-1}\in \langle [\lambda]_{m-1}\rangle$.  Then
\begin{equation*}
    [0]_{m-1}\neq c[-1]_{m-1} \in \langle [\gamma]_{m-1} \rangle \cap \langle
    [\lambda]_{m-1}\rangle,
\end{equation*}
a contradiction.  Hence $\langle [\gamma]_m\rangle$ is a free $R_m G_e$-module for all $\gamma\in \K\setminus \{\lambda\}$.

Now we claim that $\left\{\langle [\gamma]_m \rangle: \gamma\in \K\setminus \{\lambda\}\right\}$ is independent. By induction the set $\left\{\langle [\gamma]_{m-1}\rangle:\gamma\in \K\right\}$ is independent. We proceed by induction on the number of modules $\langle [\gamma]_m \rangle$ that we already know to be independent. The base case is clear. Now consider $\langle [\gamma]_m \rangle$ and $\oplus \langle [\gamma_i]_m \rangle$. By Lemma~\ref{le:exclII} it suffices to show that $\left(\langle[\gamma]_m\rangle\right)^\star \cap \oplus \left(\langle[\gamma]_m\rangle\right)^\star = \{0\}$.

Suppose the contrary.  Since each $\langle [\gamma_i]_m\rangle$ is a free $R_mG_{e(\gamma_i)}$-module,  we see that Lemma~\ref{le:starrmgiII} gives
\begin{equation*}
    \langle[\gamma_i]_m\rangle^\star = \langle 2^{m-1}P(e(\gamma_i),0)\rangle,
\end{equation*}
and this module is isomorphic to $\Ft$.  Hence we must have a
relation
\begin{equation*}
    c_\gamma 2^{m-1}P(e,0)[\gamma]_m + \sum c_{\gamma_i}
    2^{m-1}P(e(\gamma_i),0)[\gamma_i]_m = [0]_m,
\end{equation*}
with $c_\gamma$ and $c_{\gamma_i}\in \Z$ and not all divisible by
$2$.  Taking square roots, we obtain
\begin{equation*}
    c_\gamma 2^{m-2}P(e,0)[\gamma]_{m-1} + \sum c_{\gamma_i} 2^{m-2}P(e(\gamma_i),0)[\gamma_i]_{m-1} = c[-1]_{m-1}.
\end{equation*}
If $c[-1]_{m-1}= [0]_{m-1}$ then we have a contradiction of the
independence of  $\{\langle [\gamma]_{m-1}\rangle:\gamma \in \K\setminus\{\lambda\}\}$.  If
$c[-1]_{m-1}\neq [0]_{m-1}$, we have a contradiction of the
independence of $\langle [\lambda]_{m-1}\rangle$ from
$\sum_{\gamma\in \K\setminus \{\lambda\}} \langle [\gamma]_{m-1}\rangle$.

Similarly we show that $\langle [\lambda]_m \rangle$ is
independent from $\bigoplus_{\gamma\in \K\setminus \{\lambda\}} \langle [\gamma]_m\rangle$.
Suppose not:
\begin{equation*}
    r_\lambda[\lambda]_m = \sum r_i[\gamma_i]_m \neq [0]_m, \quad r_\lambda,r_i\in R_mG
\end{equation*}
for some finite set $\{\gamma_i\}\subset \K\setminus \{\lambda\}$.  Observe that, considered in $J_1$, this relation must be trivial.  Since $[\lambda]_1$ is a free $\Ft G$-module, this means that $r_\lambda \in \langle 2 \rangle \subset R_mG$; in the same way, $r_i \in \langle 2 \rangle \subset R_mG$ for any $\gamma_i \in \K$ with $\gamma \not\in K_0^\times$.  For those $\gamma_i \in \K \cap K_0^{\times}$, on the other hand, $\sigma$ acts trivially, and so $r_i$ can be assumed to be in $R_m$.  From this we again conclude $r_i \in \langle 2 \rangle \subset R_mG$ in this case as well.  

Write $r_i=2s_i$ for all $i$ and $r_\lambda=2s_\lambda$
for $s_i, s_\lambda\in R_mG$.  Then we take square roots:
\begin{equation*}
    s_\lambda[\lambda]_{m-1} + c[-1]_{m-1} = \sum s_i[\gamma_i]_{m-1}, \quad
    c\in \Z.
\end{equation*} modules $\langle [\lambda]_{m-1}\rangle$ and $\sum \langle [\gamma_i]_{m-1}\rangle$ are independent by induction, we must have that both sides of the equation are trivial.  Squaring the right-hand side, we reach $\sum r_i[\gamma_i]_m = [0]_m$, contrary to hypothesis.  Hence all $\langle [\gamma]_m\rangle$ for $\gamma\in \K$ are independent.

Now we consider the isomorphism class of $\langle [\lambda]_m\rangle$, and we will show that $\ann_{R_mG}\langle [\lambda]_m \rangle = \langle 2^\nu(\sigma-1)\rangle$.    Since
\begin{equation*}
    (\lambda^{(\sigma-1)})^{2^{\nu}} = (\xi_{2^\nu}^{-1})^{2^\nu}= 1,
\end{equation*}
 $2^{\nu}(\sigma-1)\in \ann \langle [\lambda]_m\rangle$ for all $m$. Now we show that
\begin{equation*}
\ann_{R_mG}
\langle [\lambda]_m\rangle \subset \langle 2^{\nu}(\sigma-1)\rangle.
\end{equation*}
Suppose $r[\lambda]_m = 0$. Then $\lambda^r=\beta^{2^m}$ for
$\beta\in K^\times$. Since $\langle[\lambda]_1\rangle$ is a free
$\Ft G$-module from above, we obtain $r\in \langle 2\rangle\in R_m
G$. Write $r=2s$ for $s\in R_mG$.  Then we may take square roots:
$\lambda^s = (-1)^c \beta^{2^{m-1}}$ for $c\in \Z$.  Since
$-1=(\lambda^{\sigma-1})^{2^{\nu-1}}$, we obtain
\begin{equation*}
    (s-c2^{\nu-1}(\sigma-1))[\lambda]_{m-1} = 0.
\end{equation*}
By induction $s-c2^{\nu-1}(\sigma-1) = k2^\nu(\sigma-1)$ for some $k \in R_{m-1}G$. Then
\begin{equation*}
r = 2s=2(c2^{\nu-1}(\sigma-1)+k2^\nu(\sigma-1)) \in \langle 2^\nu(\sigma-1)\rangle \subset R_mG.\end{equation*}
Hence $\ann_{R_mG} \langle \lambda \rangle = \langle
2^{\nu}(\sigma-1)\rangle$, as desired. 

Finally, $\langle [\lambda]_m \rangle$ is indecomposable as an $R_mG$-module because $R_mG$ is a local ring and all cyclic modules over a local ring are indecomposable.

Now let $Y_0$ be the span of $\langle [\gamma]_m\rangle$ for $\gamma\in \K \cap K_0^\times$ and $Y_1$ be the span of $\langle [\gamma]_m\rangle$ for $\gamma\in \K \setminus (K_0^\times \cup \{\lambda\})$. We only have left to determine the ranks of these modules.  If we write $e_1(\K) = |\K\setminus K_0^\times|$ and $e_0(\K) = |\K \cap K_0^\times|$, then \cite[Cor.~1]{MSS} gives $e_1(\K) = \dim_{\F_2}\left([N_{K/F}(K^\times)]_1\right)$ and $|\K| = \dim_{\F_2}\left([F^\times]_1\right)$.  Because $\xi_2 \in F$ by assumption, the natural map $F/F^{\times 2} \to J_1$ has 2 elements in its kernel: namely the classes represented by $1$ and $a$, where $K = F(\root\of{a})$.  Because $N_{K/F}(\root\of{a}) = -a$ and $-1\not\in N_{K/F}(K^\times)$, we cannot have $a = N_{K/F}(\beta)f^2$ for $\beta \in K^\times$ and $f \in F^\times$.  Hence $\dim_{\F_2}\left([N_{K/F}(K^\times)]_1\right) = \dim_{\F_2}\left(N_{K/F}(K^\times)F^{\times 2}/F^{\times 2}\right)$, and so it follows that $$e_1(\K) = \dim_{\F_2}\left([N_{K/F}(K^\times)]_1\right) = e_1(K/F).$$  On the other hand $aF^{\times 2} \in F^\times/F^{\times 2}$, and so $$e_0(\K) = \codim_{\F_2}\left([N_{K/F}(K^\times)]_1 \subset [F^\times]_1\right) = e_0(K/F)-1.$$  By construction we have that $Y_0$ is rank $e_0(\K) = e_0(K/F)-1$, and $Y_1$ is rank $e_1(\K)-1 = e_1(K/F)-1$.

The inverse limit calculation follows naturally.
\end{proof}

\section{Exceptionality}\label{se:np1}

For the rest of the paper we focus on the case where $F$ contains a primitive $p$th root of unity $\xi_p$, and when $p=2$ and $n=1$ we also assume $-1 \in N_{K/F}(K^\times)$.  To make equations easier to read, we will also begin to write $K^\times$ as an additive group; in particular we will use $0$ to indicate $1 \in K^\times$, and we'll write $\frac{1}{p}\gamma$ to indicate $\root{p}\of{\gamma}$ for $\gamma \in K^{\times p}$.  For the remainder of the paper, we also adopt additive notation for $K^\times$, and in particular write the $\F_pG$-action multiplicatively.  This helps make equations easier to read.

To motivate the results we will develop in this section (and, indeed, subsequent sections as well), recall that a decomposition for $J_m$ requires us to find generators of $J_1$ whose relations in $J_m$ can be computed.  As we saw (both in the introduction and the proofs of Theorems \ref{th:p2n1m1notnorm} and \ref{th:noxip}), controlling the appearance of a primitive $p$th root of unity is critical to teasing out ``new" relations which appear when we move from $J_{m-1}$ to $J_m$.

Under the hypotheses of this section ($\xi_p \in F$, and $-1\in N_{K/F}(K^\times)$ if $p=2$ and $n=1$), it was shown in \cite{MSS} that $J_1$ is a direct sum of cyclic modules with at most one summand non-free (over some quotient $\F_pG_i$).  The generator for this ``exceptional summand" is chosen in the following way: one defines an invariant $i(K/F)$ as
\begin{align*}
    i(K/F) &= \min \{\ i\in \{-\infty, 0, 1, \dots, n\} :  \exists
    \gamma\in K^\times \text{ such that } \\ &\qquad\qquad
    N_{K/F}(\gamma) \not\in pF^{\times} \text{\ and\ } \\
    &\qquad\qquad (\tau-1)[\gamma]_1 \in [K_i^\times]_1\ \ \ \forall
    \tau\in\Gal(K/F)\},
\end{align*}
and then a generator for the exceptional summand can be chosen as any $\gamma\in K^\times$ representing the minimum in the definition above.  We call such an element $\gamma$ MSS-exceptional.

Hence in computing a decomposition for $J_m$, one of our key considerations is to determine which MSS-exceptional element will allow us to keep track of relations in $J_m$.  A full answer to this question requires a great deal of machinery (this is the overall goal of sections \ref{se:np1}--\ref{sec:exceptional.modules}), but begins with the introduction of norm pairs in this section.

%Recall that $K/F$ is a cyclic field extension of degree $p^n$ with group $G$ and fixed generator $\sigma$, and that $G_i=\Gal(K_i/F)$, where $K_i$ is the subextension of degree $p^i$ over $F$. 

%In this section we introduce the notion of norm pair, show that norm pairs exist for all extensions $K/F$ as described in the previous paragraph, and define a partial order on the set of norm pairs.  In doing so we will be most interested in minimal norm pairs, for they capture information about $K/F$ which will be essential in the decomposition of $J_m$.  
%Ultimately, after we use a minimal norm pair to produce a decomposition of $J_m$ into a direct sum of indecomposables, we will be able to express the data in the norm pair in simpler terms.

%The norm pair describes the existence of an element $\alpha$ and a set of elements $\{\delta_i\}$ such that, first, $\alpha^{\sigma-d}$ may be expressed as a product of $p$-th powers of the elements of $\{\delta_i\}$, for some $d\in \Z$, and, second, a primitive $p$th root of unity bears a special relationship to $\alpha$ and $\{\delta_i\}$.  The first condition may be considered a basic equation in $J_m$, but the second condition describes a crucial connection with the structure of the multiplicative group $K^\times$ of $K$.  We keep track not only of $d$ but also of the subfields $K_{a_i}$ of $K/F$ which contain, respectively, the elements $\delta_i$, for each $\delta_i$, $0\le i<m$. In this way a norm pair consists of an integer $d$ and a set of $m$ values $a_{i}$, $0\le i<m$, such that $\delta_i\in K_{a_i}^\times$.

\subsection{Norm pairs and their basic properties}\label{sec:norm.pair.subsection}

\begin{definition*}[Norm Pair]
    Let $m\in \N$, and suppose that $\mathbf{a}$ is a vector $\mathbf{a} = (a_0,\dots,a_{m-1}) \in \{-\infty,0,1,\dots,n\}^m$ with $a_0<n$, and that $d \in U_1$.  We say that $(\mathbf{a},d)$ is a \emph{norm pair of length $m$} if there
    exist $\alpha,\delta_m \in K^\times$ and $\delta_i\in K_{a_i}^\times$ for each $0 \leq i \leq m-1$ satisfying
    \begin{equation}\label{eq:defining.properties.for.exceptionality}
    \begin{split}
        {(\sigma-d)}\alpha &= \sum_{i=0}^m p^i \delta_i\\
        \xi_p &= \left(\frac{d-1}{p}\right)N_{K/F}(\alpha)+
        N_{K_{n-1}/F}(\delta_0) + \left(\sum_{i=1}^{m}p^{i-1}
        N_{K/F}(\delta_i) \right).
    \end{split}
    \end{equation}
    We say that $(\alpha,\{\delta_i\}_{i=0}^m)$ \emph{represents}
    $(\mathbf{a},d)$. We call $\mathbf{a}$ a \emph{norm vector of length
    $m$} and $d$ a \emph{twist}.
\end{definition*}

%We will see in Proposition~\ref{pr:norm.pair.independent.of.xi.p} that the definition of norm pair is independent of the choice of primitive $p$th root of unity $\xi_p\in F$.  Moreover, we show that if $(\mathbf{a},d)$ is a norm pair of length $m$, then so is $(\mathbf{a},\check d)$ for any $\check d = d\ (p^m)$; the set of twists $d$ associated to $\mathbf{a}$ is a union of cosets of $U_m$ in $U_1$.

We impose a partial ordering $\preceq_m$ on norm pairs of length $m\in \N$, as follows.  We order the vectors $\mathbf{a}$ lexicographically.  For twists $d, d'\in U_1$, we say that $d\le_m d'$ if $d' \in U_i$ implies $d \in U_i$ for any $1 \leq i \leq m$ --- except in the case that $p=2$ and $d, d' \notin U_2$.  In this case $d,d' \in -U_2 := \{z \in \Z: z \equiv -1\ (4)\}$, and we say that $d \leq_m d'$ if $d' \in -U_i$ implies $d \in -U_i$ for all $2 \leq i \leq m$.  With these partial orders on vectors and twists, we order norm pairs $(\mathbf{a},d)$ lexicographically in the pair. Hence $(\mathbf{a},d)\preceq_m (\mathbf{a}',d')$ if either $\mathbf{a} < \mathbf{a}'$, or if $\mathbf{a} = \mathbf{a}'$ and $d\le_m d'$.

\begin{definition*}[Minimal Norm Pair, $m$-exceptional element]
  Let $m\in \N$.  If $(\mathbf{a},d)$ is a norm pair of length $m$ so that for any other norm pair $(\mathbf{a}',d')$ of length $m$ we have $(\mathbf{a},d) \preceq_m (\mathbf{a}',d')$, then we say that $(\mathbf{a},d)$ is a \emph{minimal norm pair of length $m$}.  If $(\alpha,\{\delta_i\}_{i=0}^m)$ represents a minimal norm pair $(\mathbf{a},d)$, then we say that $\alpha$ is an \emph{$m$-exceptional element}.
\end{definition*}

\begin{remark*}
The curious reader might wonder about the relationship between this new notion of exceptionality and the notion of MSS-exceptionality discussed above.  This will be considered in Proposition \ref{pr:MSS} below.
\end{remark*}

\begin{example*}
Consider the extension $K/F$ given by $\mathbb{Q}(\xi_{p^{n+1}})/\mathbb{Q}(\xi_p)$, where here $p$ is an odd prime and $\xi_{p^{n+1}}$ is some primitive $p^{n+1}$st root of unity in $\mathbb{C}$; to make things simple, we'll assume that $\xi_{p^{n+1}}^{p^n} = \xi_p$ (or, in the additive notation we use throughout the rest of the paper, that $p^n\xi_{p^{n+1}} = \xi_p$).  This extension has $\Gal(K/F) = \langle \sigma \rangle \simeq \mathbb{Z}/p^n\mathbb{Z}$.  We claim that if we let $\mathbf{a} = \{-\infty,\cdots,-\infty\}$, and if we choose $d$ so that $\sigma(\xi_{p^{n+1}}) = d \xi_{p^{n+1}}$, then $(\mathbf{a},d)$ is a minimal norm pair for this extension.  

First, let us check that $d \in U_1$, which means we simply need to argue that $d-1 = pk$ for some $k \in \mathbb{Z}$.  If this were not the case, then we'd have $d-1=j+pk$ for some $j \in \{1,2,\cdots,p-1\}$.  On the one hand, we'd have $(\sigma-1)\left(p^n \xi_{p^{n+1}}\right) = (\sigma-1)\xi_p = 0$.  But on the other hand we'd have $$(\sigma-1)(p^n \xi_{p^{n+1}}) = p^n \left( (\sigma-1) \xi_{p^{n+1}}\right) = p^n \left((d-1)\xi_{p^{n+1}}\right) = p^n \left((j+pk)\xi_{p^{n+1}}\right) = j \xi_p.$$  This is clearly a contradiction, and so $d-1=pk$ for some $k$.  In fact, a similar argument will tell us that $d \not\in U_2$: if instead we had $d-1=p^2k$ for some $k \in \mathbb{Z}$, then on the one hand we'd get $(\sigma-1)\left(p^{n-1} \xi_{p^{n+1}}\right) = (\sigma-1)\xi_{p^2} \neq 0,$ but on the other hand we'd have $$(\sigma-1)\left(p^{n-1} \xi_{p^{n+1}}\right) = p^{n-1}\left((\sigma-1) \xi_{p^{n+1}}\right) = p^{n-1}\left((d-1)\xi_{p^{n+1}}\right) = p^{n-1}\left(p^2k \xi_{p^{n+1}}\right) =0.$$

Before we can verify that $(\mathbf{a},d)$ is a norm pair, we need an additional preliminary result: that $N_{K/F}(\xi_{p^{n+1}}) = \xi_p^c$ for some $c \in \{1,2,\cdots,p-1\}$.  One can show this inductively, but we'll focus on the base case (when $n=1$) here.  In this case we have $$(N_{K/F}(\xi_{p^2}))^p = N_{K/F}(\xi_{p^2}^p) = N_{K/F}(\xi_p) = 1,$$ so that $N_{K/F}(\xi_{p^2})$ is some $p$th root of unity.  To argue that this is a primitive $p$th root of unity, note that otherwise we'd have $\xi_{p^2} \in \ker N_{K/F}$, and so by Hilbert's Theorem 90 we'd have $\xi_{p^2} = (1-\sigma)\beta$ for some $\beta \in \mathbb{Q}(\xi_{p^2})$.  But this would force $[\mathbb{Q}(\xi_p,\beta):\mathbb{Q}(\xi_p)]=p^2$, despite the fact that $\mathbb{Q}(\xi_p,\beta)$ is an intermediate extension within the degree $p$ extension $\mathbb{Q}(\xi_{p^2})/\mathbb{Q}(\xi_p)$.  

We are now ready to proceed.  Since $d \in U_1 \setminus U_2$, we know that $d-1 = jp + kp^2$ for some $j \in \{1,2,\cdots,p-1\}$.  Recalling that $N_{K/F}(\xi_{p^{n+1}}) = \xi_p^c$ for some $c$, choose $t \in \mathbb{Z}$ so that $jtc \equiv 1 \pmod{p}$. Define $\alpha = \xi_{p^{n+1}}^t$, and for each $0 \leq i \leq m$ let $\delta_i=0$.  Then by our choice of $d$ we know that $(\sigma-d)\alpha = 0 = \sum_{i=0}^m p^i\delta_i$.  Furthermore we have $$\left(\frac{d-1}{p}\right)N_{K/F}(\alpha)+N_{K_{n-1}/F}(\delta_0) + \left(\sum_{i=1}^m p^{i-1} N_{K/F}(\delta_i)\right) = (j+pk)t c \xi_p = \xi_p.$$  Hence condition (\ref{eq:defining.properties.for.exceptionality}) is satisfied, and $(\mathbf{a},d)$ is a norm pair.  

To see that it is a minimal norm pair, note that any other norm pair $(\mathbf{a}',d')$ with $\mathbf{a}' \neq (-\infty,\cdots,-\infty)$ has $(\mathbf{a},d) \preceq_m (\mathbf{a}',d')$ since $\mathbf{a}<\mathbf{a}'$.  If we know that $\mathbf{a}'=\mathbf{a} = (-\infty,\cdots,-\infty)$, it is a bit harder to argue that $d \leq_m d'$.  The result is confirmed by \cite[Theorem 1]{MSS2c}.
\end{example*}

We now establish some basic properties of norm pairs of length $m$: existence, well-definedness, and their behavior when varying $m$.  

\begin{proposition}\label{pr:norm.pairs.exist}
    For every $m\in \N$, norm pairs of length $m$ exist, and hence minimal norm pairs of length $m$ exist.
\end{proposition}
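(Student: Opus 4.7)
The plan is to proceed by induction on $m$, with the inductive step handled by a simple padding argument and the base case $m=1$ established via the MSS-exceptional element from \cite{MSS}.

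For the inductive step, suppose $(\mathbf{a}', d)$ is a norm pair of length $m-1$ represented by $(\alpha, \{\delta_0, \dots, \delta_{m-1}\})$. I would set $\mathbf{a} = (a_0', a_1', \dots, a_{m-2}', n) \in \{-\infty, 0, \dots, n\}^m$ and propose $(\alpha, \{\delta_0, \dots, \delta_{m-1}, 0\})$ as a length-$m$ representation. The constraint $\delta_{m-1}\in K_{a_{m-1}}^\times = K_n^\times = K^\times$ is automatic, and both defining equations of a length-$m$ norm pair collapse back to those for the length-$(m-1)$ pair, since the extra terms $p^m \cdot 0$ in the first equation and $p^{m-1} N_{K/F}(0)$ in the second both vanish. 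So existence reduces to the base case $m=1$.

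For the base case, I would invoke an MSS-exceptional element $\alpha \in K^\times$, whose existence in our setting is guaranteed by \cite{MSS}. Such an $\alpha$ satisfies $N_{K/F}(\alpha) \notin F^{\times p}$ and $(\sigma-1)\alpha = \delta_0 + p\delta_1$ for some $\delta_0 \in K_{i(K/F)}^\times$ (with $i(K/F)<n$, so in particular $\delta_0\in K_{n-1}^\times$) and $\delta_1 \in K^\times$. Applying $N_{K/F}$ to this relation and using $N_{K/F}((\sigma-1)\alpha)=0$ yields $p\bigl(N_{K_{n-1}/F}(\delta_0) + N_{K/F}(\delta_1)\bigr)=0$, so the combination $N_{K_{n-1}/F}(\delta_0) + N_{K/F}(\delta_1)$ is a $p$th root of unity in $F$, hence equals $c\xi_p$ for some $c \in \{0,1,\dots,p-1\}$. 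If $c\not\equiv 0\pmod p$, I would rescale $\alpha$ by the inverse of $c$ modulo $p$ (which scales $\delta_0$ and $\delta_1$ correspondingly) so that the sum becomes exactly $\xi_p$; this produces a length-$1$ norm pair $((i(K/F)), 1)$ represented by the scaled data.

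The main obstacle is the possibility that $c \equiv 0 \pmod p$, i.e.\ the combination $N_{K_{n-1}/F}(\delta_0)+N_{K/F}(\delta_1)$ vanishes for the initial choice of $\alpha$. If that happens, I would instead allow a nontrivial twist $d \in U_1 \setminus \{1\}$: the defining norm equation acquires a contribution $\tfrac{d-1}{p}N_{K/F}(\alpha)$, and because $N_{K/F}(\alpha)$ has nontrivial image in $F^\times/F^{\times p}$, one can tune $d$ and rescale $\alpha$ so that this extra contribution supplies the missing $\xi_p$. The special case $p=2$, $n=1$ needs extra attention, but here the standing hypothesis $-1 \in N_{K/F}(K^\times)$ provides precisely the arithmetic flexibility needed to complete the construction.
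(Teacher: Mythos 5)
Your padding step in the inductive argument works, but the induction is unnecessary: the paper pads its length-one construction to any length $m$ in one shot by setting $\delta_i = 0$ for all $i \ge 1$. The base case is where the real issue lies.

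Your reduction to $N_{K_{n-1}/F}(\delta_0) + N_{K/F}(\delta_1) = c\xi_p$ is correct, and rescaling handles $c \not\equiv 0 \pmod p$. But the workaround you propose for $c \equiv 0$ does not close the gap. Write $d = 1 + pk$; to preserve $(\sigma-d)\alpha = \delta_0 + p\delta_1$ you must shift $\delta_1 \mapsto \delta_1 - k\alpha$, and then
\begin{equation*}
\tfrac{d-1}{p}N_{K/F}(\alpha) + N_{K_{n-1}/F}(\delta_0) + N_{K/F}(\delta_1 - k\alpha) = N_{K_{n-1}/F}(\delta_0) + N_{K/F}(\delta_1),
\end{equation*}
because the new term $kN_{K/F}(\alpha)$ is exactly cancelled by the $-kN_{K/F}(\alpha)$ coming from the shifted $\delta_1$. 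Rescaling $\alpha$ by a unit $t$ multiplies the entire right-hand side by $t$, which still leaves $0 = 0$. So $c$ is invariant under both levers you reach for, and your argument has not escaped $c = 0$.

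The paper's proof avoids this scenario entirely and is substantially simpler. Albert's theorem gives $\xi_p = N_{K_{n-1}/F}(\delta_0)$ directly for some $\delta_0 \in K_{n-1}^\times$; then $N_{K/F}(\delta_0) = p\xi_p = 0$, so Hilbert 90 produces $\alpha$ with $(\sigma-1)\alpha = \delta_0$. Taking $d = 1$ and $\delta_i = 0$ for all $i \ge 1$ yields the norm pair $((n-1,-\infty,\dots,-\infty),1)$ of any length $m$, with $c = 1$ by construction. You could repair your base case by starting from this Albert/Hilbert 90 data rather than from a generic MSS-exceptional element, whose associated $c$ you have not shown to be nonzero.
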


\begin{proof}%[Proof of Proposition~\ref{pr:norm.pairs.exist}]
Albert's result \cite[Theorem~3]{Al} gives $\xi_p=N_{K_{n-1}/F}(\delta_0)$ for $\delta_0\in K_{n-1}^\times$. Then $N_{K/F}(\delta_0)=0$ and by Hilbert 90 there exists $\alpha\in K^\times$ such that ${(\sigma-1)}\alpha = \delta_0$. Let $m\in \N$ be arbitrary, and set $d=1$ and $\delta_i=0$ for $1\le i\le m$.  Then we have
    \begin{equation*}
        \sigma\alpha = d\alpha+\delta_0 +p\delta_1+\cdots
        +p^m\delta_m
    \end{equation*}
    and
    \begin{equation*}
        \xi_p = N_{K_{n-1}/F}(\delta_0).
    \end{equation*}
    Hence
    \begin{equation*}
        (\mathbf{a},d):=((n-1,\underbrace{-\infty,\dots, -\infty}_{m-1}),1)
    \end{equation*}
    is a norm pair of length $m$ with twist $1$.  Therefore norm pairs exist.
\end{proof}

%\noindent (Recall that for this section we assume that we are not in the case $p=2$, $n=1$, and $-1\not\in N_{K/F}(K^\times)$.)

\begin{proposition}\label{pr:norm.pair.independent.of.xi.p}
 Norm pairs are independent of the choice of primitive $p$th root of unity $\xi_p\in F$. For $m\in \N$, if $(\mathbf{a},d)$ is a norm pair of length $m$ and $\check d=d\ (p^m)$, then $(\mathbf{a},\check d)$ is also a norm pair of length $m$.
\end{proposition}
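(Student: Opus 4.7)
The plan is to address the two claims separately, each by an explicit scaling or adjustment of a given representing tuple.

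For the first claim, I would start with two primitive $p$th roots of unity $\xi_p, \xi_p' \in F$. Since both are primitive, there exists $c \in \{1,\dots,p-1\}$ with $\xi_p' = c\xi_p$ (in the additive notation adopted for $K^\times$). Given a representation $(\alpha,\{\delta_i\}_{i=0}^m)$ of $(\mathbf{a},d)$ with respect to $\xi_p$, I would propose the scaled tuple $(c\alpha,\{c\delta_i\}_{i=0}^m)$ and verify that it represents $(\mathbf{a},d)$ with respect to $\xi_p'$. The first defining equation is immediate by linearity of $(\sigma-d)$, and the second follows from $\Z$-linearity of the norms, giving $c\xi_p = \xi_p'$ on the right-hand side. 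I would also note that $c\delta_i \in K_{a_i}^\times$ since $c \in F^\times \subset K_{a_i}^\times$ (being a nonzero integer), so the field conditions on the components are preserved.

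For the second claim, write $\check d = d + p^m k$ for some $k \in \Z$, and first check $\check d \in U_1$ (immediate from $d \in U_1$ and $p \mid p^m k$). Starting from a representation $(\alpha,\{\delta_i\}_{i=0}^m)$ of $(\mathbf{a},d)$, I would keep $\alpha$ and $\delta_0,\dots,\delta_{m-1}$ unchanged and define $\delta_m' := \delta_m - k\alpha \in K^\times$. Then
\begin{equation*}
    (\sigma-\check d)\alpha = (\sigma-d)\alpha - p^m k \alpha = \sum_{i=0}^{m-1} p^i \delta_i + p^m \delta_m'\,,
\end{equation*}
which gives the first defining equation. For the norm equation, the contribution from the altered coefficient $(\check d - 1)/p = (d-1)/p + p^{m-1}k$ yields an extra term $p^{m-1} k N_{K/F}(\alpha)$, while the contribution from $\delta_m'$ in the sum yields $p^{m-1}N_{K/F}(\delta_m) - p^{m-1}k N_{K/F}(\alpha)$; these cancel precisely, so the right-hand side still equals $\xi_p$.

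I do not expect any serious obstacle here: the first claim is a straightforward $c$-rescaling, and the second is a telescoping cancellation designed to absorb the change $d \mapsto d + p^m k$ into a compensating shift in $\delta_m$. The only point requiring mild care is the bookkeeping between additive notation for $K^\times$ and multiplicative notation for the $\Z G$-action when verifying that norms behave linearly in the scaled/adjusted tuples, and ensuring the field-of-definition constraints $\delta_i' \in K_{a_i}^\times$ are respected (which they trivially are, since the only altered component is $\delta_m$, for which no restriction beyond membership in $K^\times$ is imposed).
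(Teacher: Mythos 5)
Your proposal is correct and follows essentially the same strategy as the paper's proof: scaling the representing tuple by an integer prime to $p$ to change $\xi_p$, and absorbing the change $d\mapsto \check d = d+p^m k$ by replacing $\delta_m$ with $\delta_m - k\alpha$ so the two $p^{m-1}kN_{K/F}(\alpha)$ contributions cancel. The only cosmetic quibble is the justification that $c\delta_i \in K_{a_i}^\times$: it holds because $K_{a_i}^\times$ is closed under integer powers (the $\Z$-action), not because $c$ lies in $F^\times$ — but the conclusion is right and the argument is sound.
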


\begin{proof}%[Proof of Proposition~\ref{pr:norm.pair.independent.of.xi.p}]
If $(\alpha,\{\delta_i\})$ represents a norm pair $(\mathbf{a},d)$, then for any $t$ relatively prime to $p$, we have that  $(t\alpha,\{t\delta_i\})$ satisfies condition (\ref{eq:defining.properties.for.exceptionality}) with $\xi_p$ replaced by $t\xi_p$.  Hence norm pairs are independent of the choice of primitive $p$th root of unity $\xi_p$ in $F$.

    Now suppose that $(\mathbf{a},d)$ is a norm pair of length $m$, $(\alpha,\{\delta_i\})$ represents $(\mathbf{a},d)$, and let $\check d=d \bmod p^m$.  Then $\check d=d+p^mx$ for some $x\in \Z$. We set $\check\delta_i = \delta_i$ for each $0\le i<m$ and define $\check\delta_m =\delta_m-x \alpha$.  Then we calculate
    \begin{align*}
    	(\sigma-\check d)\alpha &= (\sigma-d)\alpha + (d-\check d)\alpha
		= \sum_{i=0}^{m}p^i \delta_i -p^m x \alpha
        = \sum_{i=0}^m p^i \check\delta_i.
    \end{align*}
    and similarly
    \begin{align*}
        \xi_p %&= \xi_p \cdot N_{K/F}(\alpha)^{p^{m-1}x}\cdot N_{K/F}(\alpha)^{-p^{m-1}x} \\
        %&=\xi_p \cdot N_{K/F}(\alpha)^{\frac{\check d-d}{p}} \cdot N_{K/F}(\alpha)^{-p^{m-1}x}\\
        &=\left(\frac{d-1}{p}\right)N_{K/F}(\alpha)+ N_{K_{n-1}/F}(\delta_0) + \left(\sum_{i=1}^{m} p^{i-1}N_{K/F}(\delta_i)\right) \\&\quad \quad \quad + \left(\frac{\check d-d}{p}-xp^{m-1}\right) N_{K/F}(\alpha) \\
%        &=N_{K/F}(\alpha)^{\frac{d-1}{p}+\frac{\check d-d}{p}}\cdot N_{K_{n-1}/F}(\check \delta_0) \cdot \left(\prod_{i=1}^{m} N_{K/F}(\check \delta_i)^{p^{i-1}} \right)  \\
        &=\left(\frac{\check d-1}{p}\right)N_{K/F}(\alpha)+ \left(\sum_{i=1}^{m}p^{i-1} N_{K/F}(\check\delta_i)\right)
    \end{align*}
    Hence $(\alpha,\{\check\delta\})$ represents $(\mathbf{a},\check d)$,
    and so $(\mathbf{a},\check d)$ is a norm pair as well.
\end{proof}

If we are given two norm pairs $(\mathbf{a},d) = ((a_0,\dots,a_{m-1}),d)$ and $(\mathbf{a}',d') = ((a_0',\dots, a_{s-1}'),d')$ of respective lengths $m<s$, we say that $(\mathbf{a}',d')$ \emph{extends} $(\mathbf{a},d)$ if $(a_0,\dots,a_{m-1}) = (a_0',\dots,a_{m-1}')$ and $d \le_m d' \le_m d$.

\begin{proposition}\label{pr:norm.pair.extension.and.contraction}
 Suppose $1\le m<s$.  Then every minimal norm pair of length $m$ extends to a minimal norm pair of length $s$, and every minimal norm pair of length $s$ is the extension of a minimal norm pair of length $m$.
\end{proposition}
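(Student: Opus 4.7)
The plan is to establish a contraction operation from length-$s$ norm pairs to length-$m$ norm pairs and show that it interacts well with the $\preceq$-orderings on each side. Concretely, given a length-$s$ pair $(\mathbf{a}',d')$ represented by $(\alpha,\{\delta_i'\}_{i=0}^s)$, I define its length-$m$ contraction as the pair with vector $\mathbf{a}'|_m=(a_0',\dots,a_{m-1}')$, twist $d'$ (equivalently $d'\bmod p^m$), and representation $(\alpha,\delta_0',\dots,\delta_{m-1}',\delta_m)$, where $\delta_m=\sum_{j=m}^s p^{j-m}\delta_j'\in K^\times$. A direct calculation patterned on the proof of Proposition~\ref{pr:norm.pair.independent.of.xi.p} verifies the defining relations of a length-$m$ norm pair, and by construction $(\mathbf{a}',d')$ is a length-$s$ extension of its contraction. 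Conversely, every length-$m$ norm pair admits a natural length-$s$ extension by padding with zero $\delta_i'$'s for $m<i\le s$ and filling out the vector with $n$'s.

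I would observe next that both minimal norm pairs of length $m$ and minimal norm pairs of length $s$ are unique up to $\preceq$-equivalence, since the lex-minimum of the vector is unique and the deepest twist for a given vector is unique as a depth profile. Together with the existence of minimal length-$s$ pairs (the vectors live in the finite set $\{-\infty,0,\dots,n\}^s$ and the twists modulo $p^s$ are finite), the second statement of the proposition implies the first: once we know the unique minimal length-$s$ pair extends some minimal length-$m$ pair, then by uniqueness of minimal length-$m$ pairs it extends the unique minimal length-$m$ pair. This reduces the problem to proving the second claim.

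For the second claim, let $(\mathbf{a}',d')$ be a minimal length-$s$ pair; I would show its contraction is a minimal length-$m$ pair, from which the claim follows. First, the prefix $\mathbf{a}'|_m$ must coincide with the length-$m$ lex-minimal vector $\mathbf{a}$, since the natural extension shows every length-$m$ norm vector is the prefix of some length-$s$ norm vector, so the length-$s$ lex-min has the length-$m$ lex-min as its prefix. Second, I would show that $d'$ achieves maximal depth at each level $i\le m$: if not, a length-$m$ norm pair $(\mathbf{a},d)$ with strictly deeper twist at some level $\le m$ would produce a length-$s$ norm pair $(\mathbf{a}',d'')$ with the same vector $\mathbf{a}'$ and a strictly $\le_s$-deeper twist, contradicting $\preceq_s$-minimality of $(\mathbf{a}',d')$.

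The main obstacle, and the technical heart of this argument, is precisely producing the competitor $(\mathbf{a}',d'')$---equivalently, showing that every length-$m$ norm pair admits a length-$s$ extension with a prescribed tail vector, not just the ``free'' tail $(n,\dots,n)$ of the natural extension. Such a refinement requires splitting the free element $\delta_m\in K^\times$ of a length-$m$ representation into a sum of elements in prescribed subfields $K_{a_i'}^\times$ plus $p$-multiples, an arithmetic condition that is not generically available. I expect the resolution to come from combining the representation of the ambient length-$s$ pair $(\mathbf{a}',d')$ with that of the length-$m$ pair $(\mathbf{a},d)$, forming an algebraic combination such as $\tilde\alpha=\alpha+\beta$ whose associated representation inherits the subfield constraints from $(\mathbf{a}',d')$ while inheriting the twist depth from $(\mathbf{a},d)$, all while preserving the defining norm pair relations.
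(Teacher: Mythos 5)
Your reduction of the first claim to the second (via uniqueness of minimal norm pairs up to $\preceq$-equivalence) is correct, as is your argument that the vector prefix $\mathbf{a}'|_m$ of a minimal length-$s$ pair must equal the minimal length-$m$ vector $\mathbf{a}$. The gap is exactly where you flag it: you never actually produce the length-$s$ competitor $(\mathbf{a}',d'')$ with the prescribed tail that your twist-matching step requires, and you only speculate (``I expect the resolution to come from\ldots'') about how such a refinement of the extension operation might be built. As stated, this is not a proof.

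The paper avoids the refined-extension problem entirely by running a symmetric sandwich argument that uses both minimalities simultaneously rather than trying to prove minimality of the contraction head-on. Concretely, it extends the minimal length-$m$ pair $(\mathbf{a},d)$ to a length-$s$ norm pair with the ``lazy'' tail $(n,-\infty,\dots,-\infty)$ (just set the new $\delta_i$ to zero), and it contracts the minimal length-$s$ pair $(\mathbf{a}',d')$ to a length-$m$ pair with twist $d'$ by setting $\delta_m=\sum_{i\ge m}p^{i-m}\delta'_i$. Minimality of $(\mathbf{a},d)$ applied to the contraction gives $(\mathbf{a},d)\preceq_m((a'_0,\dots,a'_{m-1}),d')$; minimality of $(\mathbf{a}',d')$ applied to the lazy extension gives $(\mathbf{a}',d')\preceq_s((a_0,\dots,a_{m-1},n,-\infty,\dots,-\infty),d)$; truncating the second to length $m$ and combining with the first yields the two-sided inequality $(\mathbf{a},d)\preceq_m((a'_0,\dots,a'_{m-1}),d')\preceq_m(\mathbf{a},d)$, from which $a_i=a'_i$ and $d\le_m d'\le_m d$ both follow. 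The structural lesson you should take away is that you do not need to control the tail of the extension at all: the competitor you were trying to build is unnecessary because the two one-sided minimalities, one at each length, together squeeze out the conclusion.
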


\begin{proof}%[Proof of Propositions~\ref{pr:norm.pair.extension.and.contraction} and \ref{pr:contacting.exceptional.elements}]
%    We consider Proposition~\ref{pr:norm.pair.extension.and.contraction} first. 
Let
    \begin{align*}
      (\mathbf{a},d) &= ((a_0,\dots,a_{m-1}),d)\\
      (\mathbf{a}', d') &= ((a_0',\dots,a_{s-1}'),d')
    \end{align*}
    be minimal norm pairs of length $m<s$, respectively.
    Suppose that $(\alpha,\{\delta_i\})$ represents $(\mathbf{a},d)$
    and $(\check\alpha,\{\check\delta_i\})$ represents $(\mathbf{a}',d')$.

    Because
    \begin{align*}
        (\sigma-d)\alpha &= \sum_{i=0}^m p^i \delta_i\\
%    \end{equation*}
%    and
%    \begin{equation*}
        \xi_p &= \left(\frac{d-1}{p}\right)N_{K/F}(\alpha)+
        N_{K_{n-1}/F}(\delta_0) + \sum_{i=1}^{m}
        p^{i-1}N_{K/F}(\delta_i),
    \end{align*}
    we see that by setting $\delta_i=0$ for each $m<i\le s$, the norm
    pair $(\mathbf{a},d)=((a_0,\dots,a_{m-1}),d)$ may be extended to a
    norm pair of length $s>m$
    \begin{equation*}
        ((a_0,\dots,a_{m-1},n,\underbrace{-\infty, \dots,-\infty}_{s-m-1}),d).
    \end{equation*}
    (Certainly $\delta_m\in K_n^\times=K^\times$, though it may also
    be in some proper subfield as well.)

    Similarly, since
    \begin{align*}
        (\sigma-\check d)\check \alpha &= 
        \left(\sum_{i=0}^{m-1} p^i{\check\delta}_i\right) +
        p^m\left(\sum_{i=m}^{\check m} p^{i-m}{\check\delta}_i\right)\\
%    \end{equation*}
%    and
%    \begin{align*}
        \xi_p &= \left(\frac{\check d-1}{p}\right)N_{K/F}(\check\alpha)+
        N_{K_{n-1}/F}(\check \delta_0) + \sum_{i=1}^{m-1}
        p^{i-1}N_{K/F}(\check \delta_i)\\&\quad \quad \quad \quad +
        p^{m-1}N_{K/F}\left(\sum_{i=m}^{\check m} p^{i-m}{\check\delta}_i\right)
    \end{align*}
    we may obtain from $(\mathbf{a}',d')$ a finite norm pair
    $((a_0',\dots,a_{m-1}'),d')$ of length
    $m<s$.

    Now by minimality for each length $m, s$ we have
    \begin{align*}
        (\mathbf{a},d) &= ((a_0,\dots,a_{m-1}),d) \preceq_m ((
        a_0',\dots,
        a_{m-1}'),d') \\
        (\mathbf{a}',d') &= ((a_0',\dots,a_{s-1}'), d') \preceq_{s} ((a_0,\dots,a_{m-1}, n,
        -\infty,\dots,-\infty),d).
    \end{align*}
    We may truncate the vectors in the second equation to have length $m$ while still preserving their order (though now under the $\preceq_m$ order instead of $\preceq_{s}$).  This leaves
    \begin{equation*}
        ((a_0,\dots,a_{m-1}),d) \preceq_m ((a_0',\dots,
        a_{m-1}'), d') \preceq_m
        ((a_0,\dots,a_{m-1}),d).
    \end{equation*}
    Hence $a_i=a_i'$ for all $0\le i< m$ and
    \begin{equation*}
      d\le_m \check d\le_m d.
    \end{equation*}
%    Since by Proposition~\ref{pr:norm.pairs.exist}, minimal norm pairs exist of any length, we see that minimal norm pairs of length $m$ may be extended to minimal norm pairs of arbitrary length $s>m$, and minimal norm pairs of length $s$ are extensions of minimal norm pairs of lengths $1\le m<s$.  We have therefore established Proposition~\ref{pr:norm.pair.extension.and.contraction}.
%    For Proposition~\ref{pr:contacting.exceptional.elements}, observe that we have already shown that an $s$-exceptional element $\alpha$ is $m$-exceptional for all $1\le m<s$, and that the associated $\delta_i$ may be chosen as stated in the Proposition.
\end{proof}

\begin{corollary}\label{cor:contacting.exceptional.elements}
If $\alpha$ is an $s$-exceptional element, then $\alpha$ is $m$-exceptional for all $1\le m<s$.  More precisely, if $(\alpha,\{\check \delta_i\}_{i=0}^{s})$ represents a minimal norm pair $(\mathbf{a},d)$ of length $s$, then $(\alpha,\{ \delta_i\}_{i=0}^{m})$ represents the minimal norm pair $((a_0,\dots,a_{m-1}),d)$ of length $m$, where $\delta_i = \check\delta_i$ for all $0\le i<m$, and $\delta_{m} = \sum_{i=m}^{\check m}  p^{i-m}\check \delta_i$.
\end{corollary}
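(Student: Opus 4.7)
The plan is to first verify that the explicit truncation formula produces a valid norm pair of length $m$, and then deduce its minimality from the minimality of the original length-$s$ pair together with Proposition \ref{pr:norm.pair.extension.and.contraction}. Set $\delta_i = \check\delta_i$ for $0 \le i < m$ and $\delta_m = \sum_{i=m}^{s} p^{i-m}\check\delta_i \in K^\times$. Regrouping the defining relation at length $s$ gives
\begin{equation*}
(\sigma - d)\alpha = \sum_{i=0}^{s} p^i \check\delta_i = \sum_{i=0}^{m-1} p^i \delta_i + p^m \delta_m = \sum_{i=0}^{m} p^i \delta_i,
\end{equation*}
and an analogous regrouping of the norm identity, using that $\sum_{i=m}^{s} p^{i-1} N_{K/F}(\check\delta_i) = p^{m-1} N_{K/F}(\delta_m)$ by multiplicativity of the norm (applied within each summand and combined via $N_{K/F}(\delta_m)$), shows that condition \eqref{eq:defining.properties.for.exceptionality} holds with vector $(a_0,\dots,a_{m-1})$ and twist $d$. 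Since each $\delta_i \in K_{a_i}^\times$ by hypothesis and $\delta_m \in K^\times = K_n^\times$ trivially, $(\alpha,\{\delta_i\}_{i=0}^m)$ represents the length-$m$ norm pair $((a_0,\dots,a_{m-1}), d)$.

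To see that this truncation is minimal, let $(\mathbf{a}', d')$ be any minimal norm pair of length $m$. By Proposition \ref{pr:norm.pair.extension.and.contraction}, it extends to some minimal norm pair $(\tilde{\mathbf{a}}, \tilde d)$ of length $s$. Minimality of $(\mathbf{a}, d)$ at length $s$ gives $(\mathbf{a}, d) \preceq_s (\tilde{\mathbf{a}}, \tilde d)$, and truncating the first $m$ entries of both vectors preserves this inequality: lexicographic comparison at length $s$ refines the comparison at length $m$ on the initial $m$-segment, and the twist comparison $d \le_s \tilde d$ implies $d \le_m \tilde d = d'$ because the $U_i$ conditions defining $\le_m$ form a subset of those defining $\le_s$. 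Thus $((a_0,\dots,a_{m-1}), d) \preceq_m (\mathbf{a}', d')$. Conversely, the minimality of $(\mathbf{a}', d')$ at length $m$ and the fact that our truncation is a norm pair at length $m$ yield $(\mathbf{a}', d') \preceq_m ((a_0,\dots,a_{m-1}), d)$. Combining gives equality, proving minimality.

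The only genuinely delicate point, and the main thing to watch out for, is the compatibility of the partial orders $\preceq_s$ and $\preceq_m$ under truncation; once one checks that both the lexicographic comparison on vectors and the twist relation $\le$ behave correctly when cutting the length from $s$ down to $m$, the corollary is essentially a repackaging of the second half of the argument in Proposition \ref{pr:norm.pair.extension.and.contraction}, with the explicit formula for $\delta_m$ read off from the regrouping used there.
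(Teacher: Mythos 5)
Your approach is essentially the same as the one implicit in the paper (the corollary is absorbed into the proof of Proposition~\ref{pr:norm.pair.extension.and.contraction}): regroup the higher-indexed $\check\delta_i$ into a single $\delta_m$ to get a length-$m$ norm pair, then deduce minimality from the relationship established between minimal pairs of different lengths. The regrouping argument is correct, including the norm identity computation.

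There is, however, a genuine imprecision in the minimality step that a careful reader will object to. You assert that ``truncating the first $m$ entries of both vectors preserves this inequality,'' but this is false in general: if $\mathbf a \neq \tilde{\mathbf a}$ as length-$s$ vectors, with the first difference occurring at an index $\geq m$, then the truncated vectors are equal and $\preceq_m$ then requires a twist comparison --- and $\preceq_s$ with unequal vectors gives no information about the twists at all. Your subsequent phrase ``the twist comparison $d\leq_s\tilde d$'' presupposes $\mathbf a=\tilde{\mathbf a}$, but you never justify that equality. The missing observation is that both $(\mathbf a,d)$ and $(\tilde{\mathbf a},\tilde d)$ are minimal norm pairs of length $s$ (you constructed $(\tilde{\mathbf a},\tilde d)$ by extending a minimal pair, and Proposition~\ref{pr:norm.pair.extension.and.contraction} says the extension is minimal), so $(\mathbf a,d)\preceq_s(\tilde{\mathbf a},\tilde d)$ and $(\tilde{\mathbf a},\tilde d)\preceq_s(\mathbf a,d)$; since the lexicographic order on vectors over the totally ordered set $\{-\infty,0,\dots,n\}$ is a total order, this forces $\mathbf a=\tilde{\mathbf a}$, and only then does $\preceq_s$ reduce to a twist comparison $d\leq_s\tilde d$ from which $d\leq_m\tilde d$ follows. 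Two smaller points: ``$\tilde d = d'$'' overstates what the extension relation gives (it gives only $d'\leq_m\tilde d\leq_m d'$, so you should write $d\leq_m\tilde d\leq_m d'$ and conclude by transitivity), and ``combining gives equality, proving minimality'' is not the right inference --- minimality of $((a_0,\dots,a_{m-1}),d)$ follows from $((a_0,\dots,a_{m-1}),d)\preceq_m(\mathbf a',d')$ together with minimality of $(\mathbf a',d')$ and transitivity of $\preceq_m$; the reverse inequality is true but superfluous.
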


\subsection{Connection to an earlier notion of exceptionality}

Recall that we defined MSS-exceptional elements at the beginning of this section.  We now connect this notion of exceptionality and the notion of $1$-exceptionality.

We start with some preparatory results.

\begin{lemma}\label{le:equivalent.ways.to.express.power.of.xi.as.product.of.norms}
Assume that $\xi_p\in F^\times$ and if $p=2$ then $n>1$.  The following are equivalent for $\gamma\in K_{n-1}^\times$,
    $\delta\in K^\times$, and $c\in \Z$:
    \begin{enumerate}
        \item\label{it:l14a} there exists $\alpha\in K^\times$ such
        that
        \begin{enumerate}
            \item\label{it:l14sa} $(\sigma-1)\alpha=\gamma +
            p\delta$
            \item\label{it:l14sb} $\frp N_{K/F}(\alpha)\in K^\times$
            and
            \item\label{it:l14sc} $c \xi_p=(\sigma-1) \frp
            {N_{K/F}(\alpha)}$
        \end{enumerate}
        \item\label{it:l14b} $c \xi_p=N_{K_{n-1}/F}(\gamma) +
        N_{K/F}(\delta)$
    \end{enumerate}
\end{lemma}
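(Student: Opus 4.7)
The plan is to prove both directions by explicitly constructing an element $\beta\in K$ with $p\beta = N_{K/F}(\alpha)$ and $(\sigma-1)\beta = N_{K_{n-1}/F}(\gamma) + N_{K/F}(\delta)$. Iterating condition (a) gives $\sigma^j\alpha = \alpha + Q_j(\gamma+p\delta)$, where $Q_j = \sum_{k=0}^{j-1}\sigma^k$, and summing over $0\le j<p^n$ yields
\begin{equation*}
    N_{K/F}(\alpha) = p^n\alpha + R\gamma + p R\delta, \qquad R := \sum_{j=0}^{p^n-1}Q_j.
\end{equation*}

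Since $\gamma$ is fixed by $\sigma^{p^{n-1}}$, I would next group the terms of $R\gamma = \sum_{k=0}^{p^n-1}(p^n-1-k)\sigma^k\gamma$ by residue class modulo $p^{n-1}$, giving each $\sigma^{k_0}\gamma$ the integer coefficient $p(p^n-1-k_0) - \tfrac{p^n(p-1)}{2}$. A direct check confirms that this coefficient is divisible by $p$ exactly when $p$ is odd or $n>1$, which is precisely the hypothesis of the lemma. Writing $R\gamma = p S$ with $S \in K$ explicit, define
\begin{equation*}
    \beta := p^{n-1}\alpha + S + R\delta,
\end{equation*}
so that $p\beta = N_{K/F}(\alpha)$ by construction. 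Using the telescoping identity $(\sigma-1)R = P(n,0) - p^n$, I compute $(\sigma-1)R\delta = N_{K/F}(\delta) - p^n\delta$, and a parallel telescoping on the sum defining $S$ (where $\sigma^{p^{n-1}}\gamma = \gamma$ collapses the endpoints) yields $(\sigma-1)S = N_{K_{n-1}/F}(\gamma) - p^{n-1}\gamma$. Combining these with $(\sigma-1)\alpha = \gamma + p\delta$, the $p^{n-1}\gamma$ and $p^n\delta$ contributions cancel, leaving the identity $(\sigma-1)\beta = N_{K_{n-1}/F}(\gamma) + N_{K/F}(\delta)$.

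With this identity in hand, both directions follow quickly. For $(1)\Rightarrow(2)$, the element $\frp N_{K/F}(\alpha)$ from (c) is another $p$th root of $N_{K/F}(\alpha)$ in $K$, so it differs from the constructed $\beta$ by an element of $F$ (a power of $\xi_p$), which is annihilated by $\sigma-1$; hence $c\xi_p = (\sigma-1)\beta = N_{K_{n-1}/F}(\gamma) + N_{K/F}(\delta)$. For $(2)\Rightarrow(1)$, applying $N_{K/F}$ to $\gamma+p\delta$ gives $p\bigl(N_{K_{n-1}/F}(\gamma) + N_{K/F}(\delta)\bigr) = p c \xi_p = 0$, so Hilbert~90 produces $\alpha \in K^\times$ with $(\sigma-1)\alpha = \gamma + p\delta$; the same explicit $\beta$ then furnishes (b) and (c). The principal obstacle is the combinatorial divisibility underlying $R\gamma = pS$: it fails exactly when $p=2$ and $n=1$, and it is this failure that motivates the hypothesis of the lemma.
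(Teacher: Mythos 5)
Your proof is correct, and it takes a genuinely different route from the paper's. The paper handles $(\ref{it:l14a})\Rightarrow(\ref{it:l14b})$ by citing an external result (Lemma~14 of the earlier $m=1$ paper) and handles $(\ref{it:l14b})\Rightarrow(\ref{it:l14a})$ by first applying Hilbert~90, then working modulo $pK^\times$: it expands $[N_{K/F}(\alpha)]_1 = (\sigma-1)^{p^n-1}[\alpha]_1$ in $\Fp G$ and uses the factorization $P(n,0) = (\sigma-1)^{p^n-p^{n-1}-1}P(n-1,0)$ together with $(\sigma-1)N_{K_{n-1}/F}(\gamma) = 0$ to conclude $N_{K/F}(\alpha) \in pK^\times$; the exponent bound $p^n - p^{n-1} - 1 \geq 1$ is where the hypothesis ``$p>2$ or $n>1$'' enters. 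You instead prove both directions from scratch by explicitly writing down a specific $p$th root $\beta := p^{n-1}\alpha + S + R\delta$ of $N_{K/F}(\alpha)$, where $S$ is obtained by dividing $R\gamma$ by $p$ inside $K^\times$ (possible because $\gamma$ is $\sigma^{p^{n-1}}$-periodic, making the integer coefficients divisible by $p$). The two telescoping identities $(\sigma-1)R = P(n,0)-p^n$ and $(\sigma-1)S = N_{K_{n-1}/F}(\gamma) - p^{n-1}\gamma$ then give $(\sigma-1)\beta = N_{K_{n-1}/F}(\gamma) + N_{K/F}(\delta)$, from which both implications fall out immediately, since any other $p$th root of $N_{K/F}(\alpha)$ differs from $\beta$ by a power of $\xi_p \in F$, hence has the same image under $\sigma - 1$. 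Your approach is more self-contained (it avoids the citation the paper relies on), and it makes the role of the hypothesis conceptually transparent: the coefficient $p(p^n-1-k_0) - \tfrac{p^n(p-1)}{2}$ is divisible by $p$ precisely when $p>2$ or $n>1$, so the construction of $S$ fails exactly in the excluded case $p=2$, $n=1$. The cost is some combinatorial bookkeeping that the paper's mod-$p$ shortcut avoids.
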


\begin{proof}
    (\ref{it:l14a})$\Longrightarrow$(\ref{it:l14b}) is \cite[Lemma 14]{MSS}.

    For (\ref{it:l14b})$\Longrightarrow$(\ref{it:l14a}), take $p$th
    powers of the equation to obtain
    \begin{equation*}
        0 = pN_{K_{n-1}/F}(\gamma) + pN_{K/F}(\delta)
        =   N_{K/F}(\gamma+ p\delta).
    \end{equation*}
    Applying Hilbert 90, we obtain $\alpha\in K^\times$ such that
    $(\sigma-1)\alpha = \gamma+p\delta$.  Hence we have
    part~(\ref{it:l14sa}).

    Since $P(i,0)=(\sigma-1)^{p^i-1}$ in $\Fp G$, then modulo $pK^\times$ we have
    \begin{align*}
        [N_{K/F}(\alpha)]_1 &= P(n,0)[\alpha]_1 = (\sigma-1)^{p^n-1}[\alpha]_1 \\
        &= (\sigma-1)^{p^{n}-2}([\gamma]_1+p[\delta]_1) \\ 
        &= (\sigma-1)^{p^{n}-2}([\gamma]_1) \\ 
%        &= (\sigma-1)^{p^{n}-p^{n-1}-1}(\sigma-1)^{p^{n-1}-1}(\gamma)\\
        &= (\sigma-1)^{p^n-p^{n-1}-1}P(n-1,0)[\gamma]_1 \\
        &= (\sigma-1)^{p^n-p^{n-1}-1}[N_{K_{n-1}/F}(\gamma)]_1 \\
        &= [0]_1.
    \end{align*}
    For the last equation, we use the fact that if $p=2$, $n>1$ and so $p^n-p^{n-1}-1\ge 1$.   Hence we have part~(\ref{it:l14sb}).

    Since $N_{K/F}(\alpha) \in pK^\times \cap F^\times$, for some $\tilde c\in \Z$ we have
    \begin{equation*}
        (\sigma-1) \frp N_{K/F}(\alpha)=\tilde c \xi_p.
    \end{equation*}
    From (\ref{it:l14a})$\Longrightarrow$(\ref{it:l14b}) we obtain
    \begin{equation*}
        \tilde c \xi_p=N_{K_{n-1}/F}(\gamma) + N_{K/F}(\delta) = c \xi_p.
    \end{equation*}
    Therefore $\tilde c \xi_p=c \xi_p$ and, in particular, $(\sigma-1) \frp  N_{K/F}(\alpha) =c \xi_p$.  Hence we have part~(\ref{it:l14sc}).
\end{proof}

We will use Lemma \ref{le:equivalent.ways.to.express.power.of.xi.as.product.of.norms} in our proof of Proposition \ref{pr:excprop} below, but first we need the following result to handle an issue when $p=2$, $n=1$.

\begin{lemma}\label{lem:p.2.and.n.1.gives.a0.minus.infinity} 
If $p=2$ and $n=1$ then $a_0=-\infty$. 
\end{lemma}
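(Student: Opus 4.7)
The plan is to show that a norm pair of length $m$ with $a_0=-\infty$ always exists under the standing hypotheses $p=2$, $n=1$, and $-1\in N_{K/F}(K^\times)$. Once such a pair is produced, minimality of $(\mathbf{a},d)$ with respect to the lexicographic order on $\{-\infty,0,\dots,n\}^m$ forces $a_0=-\infty$: the definition of norm pair already restricts $a_0<n=1$, so $a_0\in\{-\infty,0\}$, and $-\infty$ is strictly smaller than $0$. Thus the entire content of the lemma is an existence statement.

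To produce the desired norm pair I would invoke Hilbert's Theorem 90. Using the hypothesis that $-1=\xi_p\in N_{K/F}(K^\times)$, choose $\beta\in K^\times$ with $N_{K/F}(\beta)=\xi_p$. Since $\xi_p$ has multiplicative order $p$, in additive notation $p\xi_p=0$, so $N_{K/F}(p\beta)=p\,N_{K/F}(\beta)=p\xi_p=0$. Hilbert 90 then supplies $\alpha\in K^\times$ with $(\sigma-1)\alpha=p\beta$.

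Now I would take $d=1\in U_1$, $\delta_0=0$, $\delta_1=\beta$, and $\delta_i=0$ for $2\le i\le m$. Then $\delta_0\in K_{-\infty}^\times$ (so $a_0=-\infty$), $\delta_1\in K_n^\times$ (so $a_1=n$ is a permissible choice when $m\ge 2$), $\delta_i\in K_{-\infty}^\times$ for $2\le i<m$, and $\delta_m\in K^\times$. Both defining relations \eqref{eq:defining.properties.for.exceptionality} hold by direct substitution: the first reads $(\sigma-1)\alpha=p\beta=\sum_{i=0}^{m}p^i\delta_i$, and the second reads
\begin{equation*}
\frac{d-1}{p}N_{K/F}(\alpha)+N_{K_0/F}(\delta_0)+\sum_{i=1}^{m}p^{i-1}N_{K/F}(\delta_i)=0+0+N_{K/F}(\beta)=\xi_p,
\end{equation*}
using $(d-1)/p=0$ and $N_{K_{n-1}/F}=N_{K_0/F}$ with $n=1$. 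Therefore $(\mathbf{a},d)$ with $\mathbf{a}=(-\infty,n,-\infty,\dots,-\infty)$ (interpreted as $(-\infty)$ when $m=1$) and $d=1$ is a norm pair of length $m$, concluding the proof.

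There is no serious obstacle here: the assumption $-1\in N_{K/F}(K^\times)$ is essentially tailor-made to exclude the alternative $a_0=0$ in the $(p,n)=(2,1)$ regime, and the construction is a one-line application of Hilbert 90. The only point requiring any care is organizing the $\delta_i$ so that $a_1$ is assigned a legal value when $m\ge 2$, which is handled by putting $\delta_1=\beta\in K^\times=K_n^\times$.
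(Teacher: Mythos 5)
Your proof is correct and rests on the same key idea as the paper's: use the hypothesis $-1\in N_{K/F}(K^\times)$ to manufacture a norm pair whose first entry is $-\infty$, then appeal to minimality of the lexicographic order. The only cosmetic difference is that the paper writes down $\alpha=\gamma+\tfrac{1}{2}a$ explicitly (where $K=F(\tfrac{1}{2}a)$ and $N_{K/F}(\gamma)=-1$) and verifies the two conditions by direct computation at length $1$, whereas you obtain $\alpha$ abstractly via Hilbert~90 from $N_{K/F}(2\beta)=0$ and work directly at length $m$; both are sound.
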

\begin{proof}
Suppose that $K = F(\frac{1}{2}a)$, and observe that $N_{K/F}(\frac{1}{2}a) = -1+a$.  By our overriding hypothesis, when $p=2$ and $n=1$ we must also have an element $\gamma \in K^\times$ satisfying $N_{K/F}(\gamma) = -1$.  Define $\alpha = \gamma + \frac{1}{2}a$, $\delta_0 = 0$ and $\delta_1 =-1\cdot \gamma$; we will show that $(\alpha,\{\delta_0,\delta_1\})$ represents the norm pair $((-\infty),1)$ of length $1$; by minimality of the norm pair $((a_0),d)$, it will follow that $a_0 = -\infty$.

To see that $(\alpha,\{\delta_0,\delta_1\})$ represents $((-\infty),1)$, we simply need to verify conditions (\ref{eq:defining.properties.for.exceptionality}).  To wit:
\begin{align*}
(\sigma-1)\alpha &= (\sigma-1)\gamma + (\sigma-1)\frac{1}{2}a\\
&= (\sigma+1)\gamma + (\sigma-1)\frac{1}{2}a - 2\cdot\gamma\\
&=-1 + -1 - 2\cdot\gamma\\
&= 2\delta_1\\[10pt]
-1 &= -1\cdot -1 = N_{K/F}(-1\cdot \gamma)\\
&= \frac{d-1}{2}N_{K/F}(\alpha) + N_{K_{n-1}/F}(\delta_0) + N_{K/F}(\delta_1).
\end{align*}
\end{proof}

\begin{proposition}\label{pr:excprop}
Let $\alpha\in K^\times$ be a $1$-exceptional element.  Then
    \begin{equation*}
        \frp N_{K/F}(\alpha)\in K^\times\quad \text{ and }
        \quad (\sigma-1)\frp N_{K/F}(\alpha) = \xi_p.
    \end{equation*}
    Moreover, $\alpha\not\in K_{n-1}^\times$.
\end{proposition}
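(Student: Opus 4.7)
The plan is to reduce everything to Lemma~\ref{le:equivalent.ways.to.express.power.of.xi.as.product.of.norms}, which translates the desired conclusions $\frp N_{K/F}(\alpha)\in K^\times$ and $(\sigma-1)\frp N_{K/F}(\alpha)=c\xi_p$ into a norm identity having exactly the shape that appears in the definition of a norm pair --- provided we work with $\sigma-1$ rather than $\sigma-d$. So the first step is to absorb the twist. Writing $d = 1+pk$ with $k\in\Z$ (which uses $d\in U_1$) and setting $\delta = \delta_1 + k\alpha$, the two relations representing the length-$1$ norm pair rewrite as
\begin{align*}
(\sigma-1)\alpha &= \delta_0 + p\delta, \\
\xi_p &= N_{K_{n-1}/F}(\delta_0) + N_{K/F}(\delta),
\end{align*}
since the term $\frac{d-1}{p}N_{K/F}(\alpha) = k\,N_{K/F}(\alpha)$ is exactly what is needed to reconcile $N_{K/F}(\delta)$ with $N_{K/F}(\delta_1)$.

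Under the assumption that $p>2$ or $n>1$, I would then invoke Lemma~\ref{le:equivalent.ways.to.express.power.of.xi.as.product.of.norms} with $\gamma = \delta_0$ (which lies in $K_{a_0}^\times\subseteq K_{n-1}^\times$ because $a_0<n$), with the $\delta$ above, and with $c=1$. The direction (\ref{it:l14b})$\Longrightarrow$(\ref{it:l14a}) delivers $\frp N_{K/F}(\alpha)\in K^\times$ together with $(\sigma-1)\frp N_{K/F}(\alpha) = \xi_p$. These conclusions are invariant under translating $\alpha$ by an element of $F^\times$, so they transfer from whatever witness the lemma produces back to our specific $\alpha$.

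The case $p=2$, $n=1$ falls outside the lemma's hypotheses and must be handled directly. Here Lemma~\ref{lem:p.2.and.n.1.gives.a0.minus.infinity} forces $a_0 = -\infty$, hence $\delta_0 = 0$, and the set-up collapses to $(\sigma-1)\alpha = 2\delta$ with $N_{K/F}(\delta) = -1$. Applying $1+\sigma$ to $(\sigma-1)\alpha = 2\delta$ gives $N_{K/F}(\alpha) = 2(\alpha+\delta)$ in additive notation, so $\frt N_{K/F}(\alpha)\in K^\times$ with $\alpha+\delta$ as a representative; a short computation using $(\sigma-1)\delta = N_{K/F}(\delta) - 2\delta = -1 - 2\delta$ then yields $(\sigma-1)(\alpha+\delta) = -1 = \xi_2$.

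For the moreover claim, I would argue by contradiction. If $\alpha\in K_{n-1}^\times$, then $N_{K/F}(\alpha) = p\,N_{K_{n-1}/F}(\alpha)$, so $N_{K_{n-1}/F}(\alpha)\in F^\times$ is already a $p$th root of $N_{K/F}(\alpha)$; any other $p$th root differs from it by a $p$th root of unity in $K$, which, since $\xi_p\in F$, necessarily lies in $F$ and is therefore fixed by $\sigma$. Consequently $(\sigma-1)\frp N_{K/F}(\alpha) = 0$, contradicting the equality $(\sigma-1)\frp N_{K/F}(\alpha) = \xi_p \neq 0$ just established. The main delicate point in the whole plan is the $p=2,\,n=1$ case, where Lemma~\ref{le:equivalent.ways.to.express.power.of.xi.as.product.of.norms} is unavailable and the square-root extraction has to be arranged so that the chosen representative of $\frt N_{K/F}(\alpha)$ really does satisfy the target identity.
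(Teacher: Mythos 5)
Your proof is correct and follows essentially the same route as the paper's: absorb the twist $d\in U_1$ into $\delta$, invoke Lemma~\ref{le:equivalent.ways.to.express.power.of.xi.as.product.of.norms} in the generic case, handle $p=2,\ n=1$ by the direct computation with $\alpha+\delta$ after appealing to Lemma~\ref{lem:p.2.and.n.1.gives.a0.minus.infinity}, and get the ``moreover'' part from $N_{K/F}(K_{n-1}^\times)\subset pF^\times$. The only minor difference is your explicit remark about transferring the conclusions back from the Hilbert-90 witness to the given $\alpha$ via an $F^\times$-shift; the paper's proof of the lemma already works verbatim for any $\alpha$ satisfying the single relation $(\sigma-1)\alpha=\gamma+p\delta$, so this extra step, while valid, is not strictly needed.
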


\begin{proof}%[Proof of Proposition~\ref{pr:excprop}]
    Suppose that $\alpha\in K^\times$ is an $1$-exceptional element. Then $(\alpha,\{\delta_0,\delta_1\})$ represents a minimal norm pair $((a_0),d)$ of length $1$.  Hence $a_0<n$, $\delta_0\in K_{a_0}^\times$, and
    \begin{equation*}
        (\sigma-d)\alpha = \delta_0+p\delta_1, \quad\quad
        \xi_p=N_{K_{n-1}/F} (\delta_0) +
        N_{K/F}\left(\delta_1+\frac{d-1}{p}\alpha\right).
    \end{equation*}

    Now if we are not in the case $p=2$, $n=1$ then, letting $\gamma=\delta_0$ and $\delta = \delta_1 + \frac{d-1}{p}\alpha$, Lemma~\ref{le:equivalent.ways.to.express.power.of.xi.as.product.of.norms} gives the first two desired properties for $\alpha$.

    If $p=2$ and $n=1$ then by Lemma \ref{lem:p.2.and.n.1.gives.a0.minus.infinity},
    $a_0=-\infty$ and so $\delta_0=0$.
    Hence $(\sigma-d)\alpha = 2\delta_1$ and $-1=N_{K/F}(\delta)$, where again we write $\delta = \delta_1+\frac{d-1}{2}\alpha$.  We see that $(\sigma-1)\alpha=2
    \delta_1+(d-1)\alpha$ with $d-1\in 2\Z$, and then
    \begin{equation*}
        N_{K/F}(\alpha) = (\sigma+1)\alpha = (\sigma-1)\alpha +
        2\alpha \in 2K^\times.
    \end{equation*}
    The actions of $\sigma-1$ on the two square roots of
    $N_{K/F}(\alpha)$ are identical, and we calculate the action for
    one:
    \begin{align*}
        (\sigma-1)\frt N_{K/F}(\alpha) &= (\sigma-1)(\delta+\alpha)
        \\ &= (\sigma-1)(\delta) +2\delta\\ &= N_{K/F}(\delta) = -1.
    \end{align*}
    Hence $\alpha$ has the first two desired properties.

    For the last statement, observe that $N_{K/F}(K_{n-1}^\times)\subset pF^\times$. Since $\xi_p\in F^\times$, $\alpha\in K_{n-1}^\times$ would then imply $(\sigma-1)\frp N_{K/F}(\alpha)=0$, a contradiction.
\end{proof}

\begin{proposition}\label{pr:MSS}
Let $\alpha\in K^\times$ be a $1$-exceptional element.  Then
    \begin{enumerate}
        \item $\alpha$ is MSS-exceptional
        \item\label{it:MSS2} for $((a_0,\dots,a_{m-1}),d)$ a minimal norm pair, we have
        $a_0=i(K/F)$
        \item\label{it:MSS3} if $i(K/F)\neq -\infty$, $\langle
        (\sigma-1)[\alpha]_1\rangle$ is a free $\Fp
        G_{i(K/F)}$-module.
    \end{enumerate}
\end{proposition}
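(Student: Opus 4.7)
The plan is to prove parts~(1) and~(2) together, since one requires knowing $a_0=i(K/F)$ to conclude that $\alpha$ meets the MSS-exceptional conditions at the correct level. Let $((a_0,\dots,a_{m-1}),d)$ be the minimal norm pair that $\alpha$ represents; by Corollary~\ref{cor:contacting.exceptional.elements} we may contract and assume $m=1$, with representing tuple $(\alpha,\{\delta_0,\delta_1\})$. To show $i(K/F)\le a_0$, I would verify that $\alpha$ itself lies in the set whose minimum defines $i(K/F)$, at level $i=a_0$. The condition $N_{K/F}(\alpha)\notin pF^\times$ is immediate from Proposition~\ref{pr:excprop}: since $(\sigma-1)\tfrac{1}{p}N_{K/F}(\alpha) = \xi_p$ is nontrivial, $\tfrac{1}{p}N_{K/F}(\alpha)\notin F^\times$. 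For $(\tau-1)[\alpha]_1\in [K_{a_0}^\times]_1$, I would reduce $(\sigma-d)\alpha = \delta_0 + p\delta_1$ modulo $K^{\times p}$; since $d\in U_1$ satisfies $d\equiv 1\pmod p$, the integer $d$ acts as the identity on $J_1$, and we obtain $(\sigma-1)[\alpha]_1=[\delta_0]_1 \in [K_{a_0}^\times]_1$. The factorization $\tau-1 = (1+\sigma+\cdots+\sigma^{j-1})(\sigma-1)$ for $\tau=\sigma^j$, together with $\sigma$-stability of $K_{a_0}^\times$, handles arbitrary $\tau\in G$.

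For the reverse inequality $a_0\le i(K/F)$, I would split into two cases. In the excluded case $p=2$, $n=1$, Lemma~\ref{lem:p.2.and.n.1.gives.a0.minus.infinity} yields $a_0=-\infty$ and the inequality is trivial. Otherwise, I would choose an MSS-exceptional $\gamma$ and write $(\sigma-1)\gamma = \delta_0' + p\delta_1'$ with $\delta_0'\in K_{i(K/F)}^\times$; the goal is to produce a norm pair of length $1$ with first component $i(K/F)$. The key intermediate step is that $\tfrac{1}{p}N_{K/F}(\gamma)\in K^\times$: the MSS condition forces $(\sigma-1)^{p^{i(K/F)}+1}[\gamma]_1 = 0$ in $J_1$, and outside the excluded case one verifies $p^{i(K/F)}+1\le p^n-1$, giving $[N_{K/F}(\gamma)]_1 = 0$. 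Lemma~\ref{le:equivalent.ways.to.express.power.of.xi.as.product.of.norms} then yields $c\xi_p = N_{K_{n-1}/F}(\delta_0') + N_{K/F}(\delta_1')$ for some $c$ coprime to $p$, where the nonvanishing of $c$ comes from $N_{K/F}(\gamma)\notin pF^\times$. Proposition~\ref{pr:norm.pair.independent.of.xi.p} converts this into a genuine norm pair with leading entry $i(K/F)$, and minimality of $a_0$ gives $a_0\le i(K/F)$. Combined with the first step, we obtain $a_0=i(K/F)$ and simultaneously that $\alpha$ satisfies the MSS conditions at level $i(K/F)$, proving~(1) and~(2).

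For part~(3), under $i(K/F)\neq-\infty$, Lemma~\ref{le:idealrmgiII} reduces freeness of $\langle(\sigma-1)[\alpha]_1\rangle = \langle[\delta_0]_1\rangle$ over $\Fp G_{i(K/F)}$ to the condition $N_{K_{i(K/F)}/F}(\delta_0)\notin K^{\times p}$. I would argue by contradiction: if this norm is a $p$th power, then using $P(i(K/F),0)\equiv (\sigma-1)^{p^{i(K/F)}-1}\pmod p$ together with $[\delta_0]_1=(\sigma-1)[\alpha]_1$, we deduce $(\sigma^{p^{i(K/F)}}-1)[\alpha]_1=0$ in $J_1$. The main obstacle is the descent from $[\alpha]_1\in J_1^{\langle\sigma^{p^{i(K/F)}}\rangle}$ to $[\alpha]_1\in [K_{i(K/F)}^\times]_1$, which in cohomological terms requires controlling an element of $H^1(\Gal(K/K_{i(K/F)}), K^{\times p})$ that need not vanish a priori. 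I would resolve this by invoking the explicit description of the exceptional summand of $J_1$ from~\cite{MSS}, which tracks precisely how a representative of $(\sigma-1)[\alpha]_1$ sits inside $[K_{i(K/F)}^\times]_1$. Once $[\alpha]_1 \in [K_{i(K/F)}^\times]_1$, a representative $\gamma\in K_{i(K/F)}^\times$ gives $N_{K/F}(\alpha) \equiv N_{K_{i(K/F)}/F}(\gamma)^{p^{n-i(K/F)}}\pmod{F^{\times p}}$, which lies in $F^{\times p}$ since $i(K/F)<n$. This contradicts $N_{K/F}(\alpha)\notin pF^\times$, completing the proof.
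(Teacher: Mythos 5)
Your argument for parts~(1) and~(2) is sound and broadly parallels the paper, though it is more roundabout than necessary. The paper introduces the auxiliary quantity $r = \min\{\,i : \xi_p \in N_{K_{n-1}/F}(K_i^\times) + N_{K/F}(K^\times)\,\}$, proves $a_0 = r$ directly from the defining equations of the norm pair together with a Hilbert~90 construction, and then invokes \cite[Theorem~3]{MSS} to identify $r$ with $i(K/F)$. You instead re-derive part of the content of that cited theorem by working directly with an MSS-exceptional element $\gamma$, showing $\frp N_{K/F}(\gamma) \in K^\times$ via the bound $p^{i(K/F)}+1 \le p^n - 1$ and then applying Lemma~\ref{le:equivalent.ways.to.express.power.of.xi.as.product.of.norms} to produce a norm pair of length~$1$ with leading entry $i(K/F)$. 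This works (outside the excluded $p=2$, $n=1$ case, which you handle correctly via Lemma~\ref{lem:p.2.and.n.1.gives.a0.minus.infinity}), but the paper's reduction to the norm-group quantity $r$ is cleaner.

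Part~(3) contains a genuine gap, which you yourself flag. After deducing $(\sigma-1)^{p^{i(K/F)}}[\alpha]_1 = 0$, you want to descend to $[\alpha]_1 \in [K_{i(K/F)}^\times]_1$ in order to contradict $N_{K/F}(\alpha) \notin pF^\times$. But $\sigma^{p^{i(K/F)}}$-invariance of a class in $J_1$ does not imply the class comes from $K_{i(K/F)}^\times$; the obstruction lives in a cohomology group that need not vanish here, and your proposed fix --- ``invoking the explicit description of the exceptional summand from~\cite{MSS}'' --- is too vague and in effect pulls in the very structure theorem that makes the descent unnecessary in the first place. The paper sidesteps the descent entirely: from $(\sigma-1)^{p^{a_0}}[\alpha]_1 = 0$ the cyclic module $\langle[\alpha]_1\rangle$ is spanned over $\Fp$ by $\{(\sigma-1)^j[\alpha]_1 : 0 \le j < p^{a_0}\}$, hence has $\Fp$-dimension at most $p^{a_0}$; but \cite[Theorem~2]{MSS} gives $\dim_{\Fp}\langle[\alpha]_1\rangle = p^{a_0}+1$, an immediate contradiction. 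That dimension count is the right way to close the argument; the cohomological descent route you sketch is not.
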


\begin{proof}%[Proof of Proposition~\ref{pr:MSS}]
    Since $\alpha$ is a $1$-exceptional element, there exist
    $\delta_0, \delta_1\in K^\times$ such that $(\alpha,
    \{\delta_0,\delta_1\})$ represents a minimal norm pair $(\mathbf{a},d)$.  Then $\delta_0\in K_{a_0}^\times$ and
    \begin{equation*}
        (\sigma-d)\alpha = \delta_0+p\delta_1, \quad\quad
        \xi_p=N_{K_{n-1}/F} (\delta_0) +
        N_{K/F}\left(\delta_1+\frac{d-1}{p}\alpha\right).
    \end{equation*}
    By Proposition~\ref{pr:excprop},
    \begin{equation*}
        (\sigma-1) \frp N_{K/F}(\alpha) \neq 0,
    \end{equation*}
    and so $N_{K/F}(\alpha)\not\in pF^{\times}$.

    Let
    \begin{equation*}
        r = \min \{i\in \{-\infty, 0,\dots,n-1\} : \xi_p\in
        N_{K_{n-1}/F}(K_{i}^\times) + N_{K/F}(K^\times)\}.
    \end{equation*}
    To see that $r\le a_0$, note that
    \begin{align*}
        \xi_p &= N_{K_{n-1}/F} (\delta_0) +
        N_{K/F}\left(\delta_1+\frac{d-1}{p}\alpha\right) \\ &\in
        N_{K_{n-1}/F}(K_{a_0}^\times)+N_{K/F}(K^\times).
    \end{align*}

    Now we show that $a_0\le r$.  Suppose that $\xi_p =
    N_{K_{n-1}/F}(\delta) + N_{K/F}(\gamma)$ for $\delta\in
    K_r^\times$ and $\gamma\in K^\times$. Then taking $p$-th powers
    and applying Hilbert 90, there exists $\alpha\in K^\times$ such
    that $(\sigma-1)\alpha = \delta+p\gamma$.  Hence for
    $\delta_0=\delta$ and $\delta_1=\gamma$ we see that $(\alpha,\{\delta_0,
    \delta_1\})$ represents the norm pair $((r),1)$.  Since
    $(\mathbf{a},d)\preceq_1 ((r),1)$, we deduce that $a_0\le r$.

    Therefore $a_0=r$.  From \cite[Theorem 3]{MSS} we obtain
    $i(K/F)=r$. Hence $\alpha$ is MSS-exceptional.  We have
    shown the first two items.

    For the third item, assume that $a_0\neq -\infty$.  Because $d\in U_1$ by
    definition of norm pair, $(\sigma-d)[\alpha]_1 = (\sigma-1)[\alpha]_1 =
    [\delta_0]_1$.  Since $\delta_0\in K_{a_0}^\times$, $\langle [\delta_0]_1\rangle$ is an
    $\Fp G_{a_0}$-module. Suppose that $\langle[\delta_0]_1\rangle$ is not a free $\Fp
    G_{a_0}$-module. Then by Lemma~\ref{le:idealrmgiII},
    $(\sigma-1)^{p^{a_0}-1}\in \ann_{\Fp G_{a_0}} \langle[\delta_0]_1\rangle$. Hence
    $(\sigma-1)^{p^{a_0}} [\alpha]_1 = 0$. Since the images of the
    powers of $\sigma-1$ on a generator of a cyclic $\Fp G_{a_0}$-module
    span the module over $\Fp$, $\langle[\alpha]_1\rangle$ has
    $\Fp$-dimension at most $p^{a_0}$.  On the other
    hand, by \cite[Theorem 2]{MSS}, $\langle [\alpha]_1\rangle$ has
    dimension $p^{a_0}+1$, a contradiction.
\end{proof}

\section{Norm Pairs Inequalities}\label{se:np2}
When we are in the case $\xi_p \in F$, and assuming $-1\in N_{K/F}(K^\times)$ if $p=2$ and $n=1$, it is more difficult to account for relations in $J_m$ between elements which form an $\F_pG$-basis of $J_1$.  To resolve this issue we will need to study minimal norm pairs and exceptional elements much more closely.  This section begins the investigation by establishing certain inequalities between elements of the norm vector in a minimal norm pair $(\mathbf{a},d)$.  Recall that for the remainder of the paper, we use additive notation for the operation in $K^\times$.

The following two results are the key inequalities we will use in later sections to uncover ``new" relations which appear in $J_m$.

\begin{proposition}\label{pr:basic.norm.pair.inequality}%\label{pr:twists}
Let $m\in \N$ and let $(\mathbf{a},d)$ be a minimal norm pair of
    length $m$.

    \begin{enumerate}
        \item\label{it:norm.pairs.are.increasing} If $a_i, a_j\neq -\infty$ for $0\le
        i<j<m$, then $a_i<a_j$.
        \item\label{it:p.2.d.3.mod.4.means.no.zero.ai} If $p=2$, $m\ge 2$, and $d\not\in U_2$, then $a_i\neq 0$ for any $1\le i<m$.
    \end{enumerate}
\end{proposition}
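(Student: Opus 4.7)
For part (1), my plan is a direct absorption argument. Supposing for contradiction that $a_i \ge a_j$ for some $0 \le i < j < m$ with both entries nonzero, I let $(\alpha, \{\delta_k\})$ represent $(\mathbf{a}, d)$ and define $\tilde\delta_i = \delta_i + p^{j-i}\delta_j$, $\tilde\delta_j = 0$, and $\tilde\delta_k = \delta_k$ otherwise. Since $K_{a_j}^\times \subseteq K_{a_i}^\times$, we still have $\tilde\delta_i \in K_{a_i}^\times$, and the first equation in \eqref{eq:defining.properties.for.exceptionality} holds automatically. The norm equation is preserved: the contribution $p^{j-1} N_{K/F}(\delta_j)$ of the old $\delta_j$ reappears via $p^{i-1} \cdot p^{j-i} N_{K/F}(\delta_j)$ when $i \ge 1$, and via $p^j N_{K_{n-1}/F}(\delta_j) = p^{j-1} N_{K/F}(\delta_j)$ when $i = 0$ (using $\delta_j \in K_{n-1}^\times$, valid because $a_j \le a_0 < n$, together with $N_{K/F} = p \cdot N_{K_{n-1}/F}$ on $K_{n-1}^\times$). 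The resulting norm pair $(\tilde{\mathbf{a}}, d)$ has $\tilde a_j = -\infty$ and agrees with $\mathbf{a}$ at all positions before $j$ (up to possibly making $\tilde a_i$ even smaller), so $\tilde{\mathbf{a}} <_m \mathbf{a}$, contradicting minimality.

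For part (2), I again argue by contradiction. Suppose $a_i = 0$ for some $1 \le i < m$. Part (1) forces $a_k = -\infty$ (hence $\delta_k = 0$) for $0 \le k < i$, and whenever $a_{i+1} \ne -\infty$ one has $a_{i+1} \ge 1$. Write $d - 1 = 2e$ with $e$ odd, which is exactly the content of $d \not\in U_2$. My key substitution is $\alpha' = \alpha + 2^{i-1}\delta_i$. A direct computation yields
\begin{equation*}
(\sigma - d)\alpha' = (\sigma - d)\alpha + 2^{i-1}(1 - d)\delta_i = 2^{i+1} \cdot \tfrac{1-e}{2}\,\delta_i + \sum_{k > i} 2^k \delta_k,
\end{equation*}
using that $1 - e$ is even. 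This lets me absorb the level-$i$ term into level $i+1$: set $\delta_k' = 0$ for $k \le i$, $\delta_{i+1}' = \delta_{i+1} + \tfrac{1-e}{2}\delta_i$, and $\delta_k' = \delta_k$ for $k > i+1$. Since $\tfrac{1-e}{2}\delta_i \in F^\times$ and $F^\times \subseteq K_{a_{i+1}}^\times$ whenever $a_{i+1} \ge 0$ (while $\delta_m' \in K^\times$ is unconstrained if $i+1 = m$), the new $\delta_{i+1}'$ lies in an admissible subfield.

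To confirm $(\mathbf{a}', d)$ is a norm pair, I verify the norm equation: the extra contribution to $e\,N_{K/F}(\alpha')$ is $e \cdot N_{K/F}(2^{i-1}\delta_i) = 2^{n+i-1}e\,\delta_i$, and the extra contribution to $2^i N_{K/F}(\delta_{i+1}')$ is $2^i \cdot \tfrac{1-e}{2}\,N_{K/F}(\delta_i) = 2^{n+i-1}(1-e)\,\delta_i$; these sum to $2^{n+i-1}\delta_i = 2^{i-1} N_{K/F}(\delta_i)$, exactly the contribution of the original $\delta_i$ that we erased, so the equation is preserved. Hence $\mathbf{a}' <_m \mathbf{a}$, contradicting minimality. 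The main obstacle is isolating the coefficient $2^{i-1}$ in the substitution: the naive choice $\alpha' = \alpha + \delta_i$ gives a term $2(1-e)\delta_i$ that for $i \ge 2$ stays at level $1$ and fails to relocate $\delta_i$; the factor $2^{i-1}$ supplies the extra power of $2$ needed to reach level $i+1$, and this step depends crucially on $d \not\in U_2$ ensuring $1-e$ is even.
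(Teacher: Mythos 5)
Your proof is correct and follows essentially the same path as the paper's: in part (1) the paper makes the identical absorption $\check\delta_i = \delta_i + p^{j-i}\delta_j$, $\check\delta_j = 0$ (treating $i=0$ separately via $N_{K/F} = p\,N_{K_{n-1}/F}$ on $K_{n-1}^\times$ exactly as you do), and in part (2) it makes the same substitution $\check\alpha = \alpha + 2^{i-1}\delta_i$ with the same relocation of $\delta_i$ into the $(i+1)$st slot, your $e$ and the paper's $x$ related by $e = -1+2x$. The only cosmetic difference is that the paper first invokes Proposition~\ref{pr:norm.pair.extension.and.contraction} to reduce to $m=j+1$ (resp.\ $m=i+1$), which you bypass by handling the general index directly; both routes are valid.
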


\begin{proposition}\label{pr:nv3}\
    Let $m\in \N$ and $(\mathbf{a},d)$ be a minimal norm pair of length $m$.

    \begin{enumerate}
%        \item\label{it:twn} If $d\in U_t\setminus U_{t+1}$ for some $1\le t<m$ and $a_t\neq -\infty$, then for each $i$ with $0\le i\le t$, we have $a_i< n-t+i$.
        \item\label{it:nv3w} We have
        \begin{equation*}
            a_{i}+j<a_{i+j}, \quad
            \begin{cases}
                0\le i< m, \\
                1\le j< m-i, \\
                a_i, a_{i+j}\neq -\infty.
            \end{cases}
        \end{equation*}
        except if $p=2$, $m\ge 2$, $d\not\in U_2$, and $i=a_i=0$.
        \item\label{it:nv3x} If $p>2$ or $d\in U_2$ and
        $d\in U_t\setminus U_{t+1}$ for $t<m$, or if $p = 2$ and $d \in -U_t\setminus -U_{t+1}$ for $t \geq 2$, then
        \begin{equation*}
            a_{t+k} > k, \quad \begin{cases}
                0\le k<m-t, \\
                a_{t+k}\neq -\infty.
            \end{cases}
        \end{equation*}
        \item\label{it:nv3y} If $p=2$, $a_0 = 0$ and $d \in -U_t \setminus -U_{t+1}$ for $2\le t$, then
        \begin{equation*}
            a_{t+k-1} > k, \quad \begin{cases}
                0\le k\leq m-t, \\
                a_{t+k-1}\neq -\infty.
            \end{cases}
        \end{equation*}
    \end{enumerate}
\end{proposition}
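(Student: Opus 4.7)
My plan is to prove all three parts by contradiction, exploiting the minimality of $(\mathbf{a},d)$ under $\preceq_m$: if the stated inequality fails, one constructs a new representing tuple $(\alpha',\{\delta_k'\})$ whose associated norm pair is strictly smaller, violating minimality.

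For Part~(\ref{it:nv3w}), let $(\alpha,\{\delta_k\})$ represent $(\mathbf{a},d)$ and suppose $a_i+j\ge a_{i+j}$ with both values finite and outside the tabulated exception. The goal is to replace $\delta_{i+j}$ by a new $\delta_{i+j}'\in K_{a_{i+j}'}^\times$ with $a_{i+j}'<a_{i+j}$. The key point is that since $\delta_i\in K_{a_i}^\times$ and $a_i+j\ge a_{i+j}$, the element $p^j\delta_i$ sits in a subgroup compatible with $p^{i+j}\delta_{i+j}$; concretely one solves an equation $(\sigma-d)\beta\equiv p^{i+j}(\delta_{i+j}-\delta_{i+j}')$ modulo higher powers of $p$ for some $\beta\in K^\times$, and existence of $\beta$ reduces, via Hilbert~90 applied to $\sigma-d$, to the vanishing of a norm obstruction which follows from the $\xi_p$-equation in~(\ref{eq:defining.properties.for.exceptionality}). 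Replacing $\alpha$ by $\alpha+\beta$ and updating the $\delta_k$'s accordingly produces the desired smaller norm pair.

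For Part~(\ref{it:nv3x}), the base case $a_t>0$ is established separately: if $a_t=0$ then $\delta_t\in F^\times$, and writing $d=1+p^tu$ (or $d=-1+2^tu$ in the case $p=2$, $d\in -U_t$) with $u\in\Z_p^\times$, one modifies $\alpha$ by a scalar $c\in F^\times$ chosen so that $(d-1)c$ cancels $p^t\delta_t$. Recomputing both defining equations of the norm pair via Proposition~\ref{pr:norm.pair.independent.of.xi.p} produces a new norm pair with the same $\mathbf{a}$ but with twist strictly deeper in the $U$-filtration, contradicting minimality of $d$. Once $a_t\ge 1$ is in hand, the inequalities $a_{t+k}>k$ for $k\ge 1$ follow from Part~(\ref{it:nv3w}) with $i=t$, $j=k$: $a_{t+k}>a_t+k\ge k+1>k$. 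The hypotheses of Part~(\ref{it:nv3x}) force $t\ge 1$ (and $t\ge 2$ when $p=2$), which avoids the exception in Part~(\ref{it:nv3w}).

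Part~(\ref{it:nv3y}) follows the same template, but with extra care because $a_0=0$ puts us in the exception of Part~(\ref{it:nv3w}) with respect to $i=0$. For $k=0$, the inequality $a_{t-1}>0$ is immediate from that very exception clause, which already asserts $a_j\ne 0$ for $j\ge 1$. For $k\ge 1$, when $a_{t-1}$ is finite the inequality $a_{t+k-1}>k$ drops out of Part~(\ref{it:nv3w}) applied with $i=t-1$; when $a_{t-1}=-\infty$ (so Part~(\ref{it:nv3w}) cannot be chained through any positive index), one runs a direct modification argument as in Part~(\ref{it:nv3x})'s base case, using $\delta_0$ and $\delta_{t+k-1}$ together to push $d$ from $-U_t$ into $-U_{t+1}$ and again contradict minimality of $d$. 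The main obstacle throughout is the $p=2$ bookkeeping required by Lemma~\ref{le:upowerII}, which forces separate treatment of the $U_v$ and $-U_v$ filtrations; a secondary obstacle is ensuring that each modification's adjustment $\beta$ admits a preimage under $\sigma-d$ in $K^\times$, which uses Lemma~\ref{le:equivalent.ways.to.express.power.of.xi.as.product.of.norms} to convert the $\xi_p$-equation into the needed Hilbert-90-style existence statement.
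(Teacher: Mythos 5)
Your high-level strategy — derive a contradiction from the minimality of $(\mathbf{a},d)$ by constructing a smaller norm pair — is the right one, and it matches the paper. But the execution has several genuine gaps.

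First, for Parts~(\ref{it:nv3x}) and~(\ref{it:nv3y}), the ``base case plus chaining through Part~(\ref{it:nv3w})'' plan does not cover all cases. Your chain needs some index $i < t+k$ with $a_i \neq -\infty$ (and not in the Part~(\ref{it:nv3w}) exception) to propagate a bound forward. But there is nothing preventing $a_0 = a_1 = \cdots = a_{t+k-1} = -\infty$ while $a_{t+k} \neq -\infty$ (indeed, the paper's own example in Section~\ref{se:np1} has every $a_i = -\infty$, and more generally long gaps of $-\infty$ are allowed). In that situation your chain never starts, yet the proposition still asserts $a_{t+k} > k$. This is why the paper's proof of Parts~(\ref{it:nv3x}) and~(\ref{it:nv3y}) does \emph{not} reduce to Part~(\ref{it:nv3w}) but instead runs a self-contained modification for every $k$: one constructs $Q \in \Z G$ with $(\sigma-d)Q \equiv yp^{k-a_{t+k}}(\sigma^{p^{a_{t+k}}}-1)-p^{t+k} \pmod{p^m\Z G}$ (the existence of $Q$ coming from Lemma~\ref{le:upowerII} or Lemma~\ref{le:phiII}(\ref{it:phi3II})), and then passes to $\check\alpha = \alpha + Q\delta_{t+k}$ with $\check\delta_{t+k}=0$.

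Second, even where your base case applies, your ``scalar modification'' only handles $a_{t+k}=0$: if $\delta_{t+k}\in F^\times$ you can set $\check\alpha = \alpha + c$ with $c\in F^\times$ so that $(1-d)c$ cancels $p^{t+k}\delta_{t+k}$. But the conclusion of Part~(\ref{it:nv3x}) is $a_{t+k}>k$, which excludes \emph{all} of $a_{t+k}\in\{0,1,\dots,k\}$, not just $a_{t+k}=0$. For $1 \le a_{t+k} \le k$, $\delta_{t+k}$ has nontrivial $\sigma$-action, so $(\sigma-d)c = (1-d)c$ no longer applies to the needed $c$, and your modification doesn't construct itself. The full range requires the operator $Q$, which uses $\sigma^{p^{a_{t+k}}}-1$ (or $P(a_{t+k},\cdot)$) rather than a scalar.

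Third, a conceptual confusion: you claim the base-case modification ``produces a new norm pair with the same $\mathbf{a}$ but with twist strictly deeper in the $U$-filtration.'' It does not. Adding $c$ to $\alpha$ and absorbing $p^t\delta_t$ sets $\check\delta_t = 0$, i.e., $\check a_t = -\infty < a_t$; the twist $d$ is unchanged. The contradiction comes from $\mathbf{a}$ decreasing lexicographically, not from $d$ changing. Proposition~\ref{pr:norm.pair.independent.of.xi.p}, which you invoke here, is about replacing $d$ by $d \pmod{p^m}$ or replacing the choice of $\xi_p$, and is not what verifies the second defining equation after a modification of $\alpha$.

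Fourth, and related: after \emph{any} modification of $\alpha$, you must verify that the $\xi_p$-equation in~(\ref{eq:defining.properties.for.exceptionality}) still holds. The paper's mechanism for this is Lemma~\ref{le:excmod}, which shows that if $\alpha$ is $1$-exceptional and $\check\alpha - \alpha$ lies in the right subgroup, then the $\xi_p$-equation follows automatically from the $(\sigma-\check d)$-equation. You gesture at Hilbert~90 and Lemma~\ref{le:equivalent.ways.to.express.power.of.xi.as.product.of.norms}, but Lemma~\ref{le:excmod} is the result that packages the argument; without it (or a rederivation), each modification in your proof needs its own verification of the second equation, which your proposal does not supply. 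Likewise, for Part~(\ref{it:nv3w}) the paper's construction (Lemma~\ref{le:delicate}) relies on $\phi_d$ and the operators $P(i,j)$ to absorb $\delta_{i+j}$ into $\delta_i$ — a mechanism quite different from ``replacing $\delta_{i+j}$ by a smaller element'' and one that is essential to make the modification land in $K_{a_i}^\times$.
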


%\begin{proposition}\label{pr:droplower}
%    Suppose $m\in \N$ and let $(\mathbf{a},d)$ be a minimal norm pair
%    of length $m$.  Suppose that $d\not\in U_m$.  Then $d\in
%    U_{\omega}\setminus U_{\omega+1}$ and
%    \begin{enumerate}
%        \item if $p=2$ and $\omega=1$, then for $0\le k<n-\omega$, if
%        $a_{\omega+k}< k$, then $a_{\omega+k}=-\infty$
%        \item otherwise, then for $0\le k<n-\omega$, if
%        $a_{\omega+k}\le k$, then $a_{\omega+k}=-\infty$.
%    \end{enumerate}
%\end{proposition}

%We begin our proofs with the following Lemma.

\begin{proof}[Proof of Proposition~\ref{pr:basic.norm.pair.inequality}]
    (\ref{it:norm.pairs.are.increasing}). By Proposition~\ref{pr:norm.pair.extension.and.contraction} we may assume that $m=j+1$.  Suppose that $(\alpha,\{\delta_l\})$ represents $(\mathbf{a},d)$.  Then
    \begin{align*}
        (\sigma-d)\alpha &= \sum_{l=0}^m {p^l}\delta_l\\
        \xi_p &= \frac{d-1}{p}N_{K/F}(\alpha) +
        N_{K_{n-1}/F}(\delta_0) + \sum_{l=1}^{m}
        p^{l-1}N_{K/F}(\delta_l).
    \end{align*}

    Now suppose that for some $i$ and $j$ with $0\le i<j<m$, we have $a_i\ge a_j$ with $a_i, a_j\neq -\infty$. Set
    \begin{align*}
        \check \delta_l = \delta_l \quad (\text{for } 0\le l\le m, \
        l\neq i, j), \qquad 
        \check \delta_i = \delta_i+p^{j-i}\delta_j, \quad \text{ and }\quad
        \check \delta_j = 0.
    \end{align*}
    Then
    \begin{equation*}
        (\sigma-d)\alpha=\sum_{l=0}^m
        p^l \check \delta_l.
    \end{equation*}
    If $i=0$ then we have $a_0<n$, which in turn implies $a_j<n$ as well. Hence we have
    \begin{align*}
        N_{K_{n-1}/F}(\check \delta_0) &=
        N_{K_{n-1}/F}(\delta_0) + p^j N_{K_{n-1}/F}(\delta_j) \\
        &= N_{K_{n-1}/F}(\delta_0) + p^{j-1} N_{K/F}(\delta_j).
    \end{align*}
    Otherwise, since $a_i\ge a_j$ we have
    \begin{align*}
        {p^{i-1}}N_{K/F}(\check \delta_i) &=
        {p^{i-1}}N_{K/F}(\delta_i)+
        {p^{i-1+j-i}}N_{K/F}(\delta_j) \\
        &= {p^{i-1}} N_{K/F}(\delta_i)+{p^{j-1}}N_{K/F}(\delta_j).
    \end{align*}
    Hence
    \begin{equation*}
        \xi_p = {\frac{d-1}{p}}N_{K/F}(\alpha)+
        N_{K_{n-1}/F}(\check\delta_0) + \sum_{l=1}^{m}
        {p^{l-1}}N_{K/F}(\check\delta_l).
    \end{equation*}
    Now since $a_i\ge a_j$, $\check\delta_i\in K_{a_i}^\times$. Then since $\check\delta_j=1$, we see that $(\alpha,\{\check\delta_l\})$ represents
    \begin{equation*}
        ((a_0,\dots,a_{j-1},-\infty),d).
    \end{equation*}
    Since $(\mathbf{a},d)\preceq_m ((a_0,\dots,a_{j-1},-\infty),d)$, we have reached a contradiction with $a_j\neq -\infty$.  Hence $a_i<a_j$ for $0\le i<j<m$ and $a_i$, $a_j\neq -\infty$.

    (\ref{it:p.2.d.3.mod.4.means.no.zero.ai}).  Suppose that $p=2$, $m\ge 2$, $d\not\in U_2$, and $a_i=0$ for some $1\le i< m$.  By Proposition~\ref{pr:norm.pair.extension.and.contraction} we may assume that $m=i+1$, and by item~(\ref{it:norm.pairs.are.increasing}) we may assume that $a_j=-\infty$ for all $0\le j<i$.  Let $(\alpha,\{\delta_l\})$ represent $(\mathbf{a},d)$, and write $d=-1+4x$ for some $x\in \Z$.

    Then
    \begin{align*}
        (\sigma-d)\alpha &=  \sum_{l=0}^m {2^l}\delta_l
        = 2^i \delta_i+2^m \delta_{m}% \\
%        &= \alpha^{-1}\alpha^{4x}
%        \delta_i^{2^i} \delta_m^{2^m}.
    \end{align*}
    and
    \begin{align*}
        -1 &= {\frac{d-1}{2}}N_{K/F}(\alpha)+
        N_{K_{n-1}/F}(\delta_0) + \sum_{l=1}^{m}
        {2^{l-1}}N_{K/F}(\delta_l) \\
        &= (-1+2x)N_{K/F}(\alpha)+ {2^{i-1}}N_{K/F}(\delta_i)+
        {2^{m-1}}N_{K/F}(\delta_m).
    \end{align*}

    Set
    \begin{align*}
        \check\alpha = \alpha + 2^{i-1}\delta_i \quad \quad 
        \check\delta_0 = \cdots = \check \delta_{m-1} = 0\quad \quad 
        \check\delta_{m} = (1-x)\delta_i+\delta_m.
    \end{align*}
    Then, recalling that $\delta_i\in K_{a_i}^\times=F^\times$ and
    $m=i+1$,
    \begin{align*}
        (\sigma-d)\check\alpha &= (\sigma-d)(\alpha+2^{i-1}\delta_i) \\
        &= 2^i\delta_i + 2^m \delta_m + 2^{i-1}\delta_i - (-1+4x)2^{i-1}\delta_i \\
%        &= 2^i\delta_i + 2^m \delta_m + 2^i\delta_i-x2^{i+1}\delta_i\\
        &= 2^m \check \delta_m.
    \end{align*}
    Moreover, using the fact that $\frac{d-1}{2} = -1+2x$, we recover
    \begin{align*}
        \frac{d-1}{2}&N_{K/F}(\check\alpha)
        +2^{m-1}N_{K/F}(\check\delta_m)\\&={\frac{d-1}{2}}N_{K/F}(\alpha+2^{i-1}\delta_i)+2^{m-1}N_{K/F}((1-x)\delta_i+\delta_m)\\ 
%        &= {\frac{d-1}{2}}N_{K/F}(\alpha)+
%        ({2^i-2^{i-1}})N_{K/F}(\delta_i) +{2^{m-1}}N_{K/F}(\delta_m)\\ 
        &= {\frac{d-1}{2}}N_{K/F}(\alpha)+
        {2^{i-1}}N_{K/F}(\delta_i)+{2^{m-1}} N_{K/F}(\delta_m)=-1.%\\ &= -1.
    \end{align*}
    Hence $(\check\alpha,\{\check\delta_l\})$ represents
    \begin{equation*}
        ((-\infty,\dots,-\infty),d).
    \end{equation*}
    Since $(\mathbf{a},d)\preceq_m ((-\infty,\dots,-\infty),d)$ and
    $a_i\neq -\infty$, we have reached a contradiction.
\end{proof}
Before proving Proposition \ref{pr:nv3} we will need some preliminary results.  
The following lemma will be of crucial importance in establishing properties of minimal norm pairs: it shows that, under certain conditions, we need only show one of the two halves of the norm pair condition.

\begin{lemma}\label{le:excmod}
Let $m\in \N$.  Suppose that $(\mathbf{a},d)$ is a norm pair
    of length $m$ represented by $(\alpha,\{\delta_i\})$ where
    $\alpha$ is $1$-exceptional.

    Suppose that $\check\alpha\in K^\times$ satisfies
    $[\check\alpha]_1-[\alpha]_1\in [K_{n-1}^\times]_1 + (\sigma-1)J_1$ and
    \begin{equation*}
        (\sigma-\check d)\check\alpha = \sum_{i=0}^m
        p^i\check\delta_i
    \end{equation*}
    with $\check\delta_i\in K_{a_i}^\times$, $0\le i<m$, for $\check d\in U_1$.
    Then $(\check\alpha,\{\check\delta_i\})$ represents the norm pair
    $(\mathbf{a},\check d)$.
\end{lemma}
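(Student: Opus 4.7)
The plan is to verify the norm equation
$$\xi_p = \frac{\check d - 1}{p} N_{K/F}(\check\alpha) + N_{K_{n-1}/F}(\check\delta_0) + \sum_{i=1}^m p^{i-1} N_{K/F}(\check\delta_i),$$
which is the only defining condition of a representing tuple not already given by hypothesis. I would attack this in two steps: first pin down $(\sigma-1)\frp N_{K/F}(\check\alpha) = \xi_p$ using the prescribed relationship between $\check\alpha$ and the $1$-exceptional element $\alpha$, then convert the hypothesis $(\sigma-\check d)\check\alpha = \sum p^i\check\delta_i$ into the norm equation via Lemma~\ref{le:equivalent.ways.to.express.power.of.xi.as.product.of.norms}.

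For the first step, I would translate $[\check\alpha]_1 - [\alpha]_1 \in [K_{n-1}^\times]_1 + (\sigma-1)J_1$ into an honest equation $\check\alpha = \alpha + k + (\sigma-1)\beta + p\gamma$ with $k \in K_{n-1}^\times$ and $\beta,\gamma \in K^\times$. Using $N_{K/F}(k) = pN_{K_{n-1}/F}(k)$ for $k \in K_{n-1}^\times$, $N_{K/F}((\sigma-1)\beta) = 0$, and the defining property $(\sigma-1)\frp N_{K/F}(\alpha) = \xi_p$ from Proposition~\ref{pr:excprop}, I expect $(\sigma-1)\frp N_{K/F}(\check\alpha) = \xi_p$ to drop out for a suitable choice of $p$-th root, since the extra summands $N_{K_{n-1}/F}(k)$ and $N_{K/F}(\gamma)$ lie in $F^\times$ and are annihilated by $\sigma-1$.

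For the second step, I would rewrite the hypothesis as $(\sigma-1)\check\alpha = \check\delta_0 + p\delta$ with $\delta := \check\delta_1 + p\check\delta_2 + \cdots + p^{m-1}\check\delta_m + \frac{\check d-1}{p}\check\alpha$ (legitimate since $\check d\in U_1$ forces $\frac{\check d-1}{p}\in\Z$), and then apply Lemma~\ref{le:equivalent.ways.to.express.power.of.xi.as.product.of.norms} with $\check\delta_0$ playing the role of $\gamma$ (interpreted as $0$ when $a_0 = -\infty$). Step one ensures that the three hypotheses of that lemma hold with $c = 1$, and its conclusion $\xi_p = N_{K_{n-1}/F}(\check\delta_0) + N_{K/F}(\delta)$ expands to precisely the desired norm equation.

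The main obstacle will be the excluded case $p=2$, $n=1$, where Lemma~\ref{le:equivalent.ways.to.express.power.of.xi.as.product.of.norms} does not apply. Here Lemma~\ref{lem:p.2.and.n.1.gives.a0.minus.infinity} forces $a_0 = -\infty$, so $\check\delta_0 = 0$, and I would instead mimic the $p=2, n=1$ portion of the proof of Proposition~\ref{pr:excprop}: setting $\check\delta' := \sum_{i=1}^m 2^{i-1}\check\delta_i + \frac{\check d - 1}{2}\check\alpha$, one finds $(\sigma-1)\check\alpha = 2\check\delta'$, $\frt N_{K/F}(\check\alpha) = \check\alpha + \check\delta'$, and $(\sigma-1)\frt N_{K/F}(\check\alpha) = N_{K/F}(\check\delta')$. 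Combined with step one this yields $N_{K/F}(\check\delta') = -1 = \xi_2$, which on expansion is the required equation.
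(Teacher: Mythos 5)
Your proposal is correct and follows essentially the same route as the paper's proof: both first pin down $(\sigma-1)\frp N_{K/F}(\check\alpha) = \xi_p$ from Proposition~\ref{pr:excprop} together with the hypothesis on $[\check\alpha]_1-[\alpha]_1$, then obtain the norm equation via Lemma~\ref{le:equivalent.ways.to.express.power.of.xi.as.product.of.norms}, handling $p=2$, $n=1$ separately with Lemma~\ref{lem:p.2.and.n.1.gives.a0.minus.infinity} and the same direct square-root computation.
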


\begin{proof}
    Since $\alpha$ is $1$-exceptional, Proposition~\ref{pr:excprop}
    tells us that
    \begin{equation*}
        \frp N_{K/F}(\alpha) \in K^\times\quad \text{and}\quad
        (\sigma-1) \frp N_{K/F}(\alpha) = \xi_p.
    \end{equation*}
    Since $[\check\alpha]_1 - [\alpha]_1\in [K_{n-1}^\times]_1+
    (\sigma-1)J_1$, we see that
    $N_{K/F}(\check\alpha-\alpha)\in pF^\times$ and so for some $f \in F^\times$ we have
    \begin{equation*}
        \frp N_{K/F}(\check \alpha)= \frp (N_{K/F}(\alpha) +pf) \in
        K^\times.
    \end{equation*}
    Furthermore
    \begin{equation}\label{eq:excmodx}
        (\sigma-1) \frp N_{K/F}(\check \alpha) = (\sigma-1) \frp
        N_{K/F}(\alpha) = \xi_p.
    \end{equation}

    Set $\check\gamma=\check\delta_0$ and $\check\delta=\sum_{i=1}^{m} p^{i-1}\check\delta_i+\frac{\check d-1}{p}\check\alpha$.  Then $(\sigma-1)\check\alpha = \check\gamma+p\check\delta$.

    Now if we are not in the case $p=2$ and $n=1$, by
    Lemma~\ref{le:equivalent.ways.to.express.power.of.xi.as.product.of.norms},
    \begin{equation*}
        \xi_p = \frac{\check d-1}{p}N_{K/F}(\check\alpha) +
        N_{K_{n-1}/F}(\check\delta_0) + \sum_{i=1}^{m}
        p^{i-1}N_{K/F}(\check\delta_i).
    \end{equation*}
    Hence $(\check\alpha,\{\check\delta_i\})$ represents $(\bar
    a,\check d)$.

    If $p=2$ and $n=1$ then by
    Lemma \ref{lem:p.2.and.n.1.gives.a0.minus.infinity}, $a_0=-\infty$.  Hence $\check\delta_0=0$ and so $(\sigma-1)\check \alpha = 2 \check \delta$ for $\check \delta := \sum_{i=1}^m 2^{i-1} \check \delta_i + \frac{\check d-1}{2}\check\alpha$.  Also from \eqref{eq:excmodx} we have
    \begin{equation*}
        (\sigma-1)\frt N_{K/F}(\check\alpha) =
        (\sigma-1)\frt N_{K/F}(\alpha) = -1.
    \end{equation*}
    The action of
    $\sigma-1$ on the two square roots of $N_{K/F}(\check\alpha) =
    (\sigma+1)\check \alpha = 2\check\delta + 2\check\alpha$ are
    identical, and we calculate the action for one:
    \begin{align*}
        -1=(\sigma-1)\frt N_{K/F}(\check\alpha) &=
        (\sigma-1)(\check\delta + \check\alpha) \\ &=
        (\sigma-1)\check\delta  + 2\check\delta \\ &=
        N_{K/F}(\check\delta).
    \end{align*}
    Computing $N_{K/F}(\check\delta)$ out explicitly, this last equality gives
    \begin{equation*}
        -1 = \frac{\check d-1}{2}N_{K/F}(\check\alpha) +
        \sum_{i=1}^{m} p^{i-1}N_{K/F}(\check\delta_i).
    \end{equation*}
    Hence $(\check\alpha,\{\check\delta_i\})$ represents $(\bar
    a,\check d)$.
\end{proof}

\begin{lemma}\label{le:delicate}
    Let $m\ge 2$.
    Suppose that $(\mathbf{a},d)$ is a norm pair
    of length $m$ represented by $(\alpha,\{\delta_i\})$.  Suppose moreover that for
    some $i$ with $0\le i<m-1$, $a_i\neq -\infty$, and $0<a_{m-1}-a_i\le m-1-i$.  Finally, suppose that either $p>2$, $a_i>0$, or $d\in U_2$. Then there exist $1$-exceptional $\check\alpha$ and $\{\check\delta_i\}$ such that $(\alpha,\{\check\delta_i\})$ represents $((a_0,\dots,a_{m-2},-\infty),d)$.
\end{lemma}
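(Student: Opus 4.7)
The plan is to modify the representative so that the contribution from $\delta_{m-1}$ is eliminated, producing a representation of the norm pair $((a_0,\dots,a_{m-2},-\infty),d)$. Setting $r = a_{m-1}-a_i$, with $1 \le r \le m-1-i$ by hypothesis, I would take $\check\alpha = \alpha - \beta$ with $\beta \in K^\times$ constructed so that $(\sigma-d)\beta$ equals $p^{m-1}\delta_{m-1}$ plus a $p^m$-divisible correction (which is then absorbed into $\check\delta_m$). Setting $\check\delta_j = \delta_j$ for $0 \le j \le m-2$ and $\check\delta_{m-1} = 0$ would give the defining equation of the desired norm pair, and provided $[\beta]_1 \in [K_{n-1}^\times]_1 + (\sigma-1)J_1$, the norm identity for the new pair would follow immediately from Lemma~\ref{le:excmod}.

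For the construction I would seek $\beta \in K_{a_{m-1}}^\times$ of the form $\beta = p^{m-1-r}\epsilon$, reducing the problem to finding $\epsilon \in K_{a_{m-1}}^\times$ with $(\sigma-d)\epsilon$ equal to $p^r\delta_{m-1}$ up to a $p^{r+1}$-divisible error. Here the non-exceptional hypothesis on $(p,a_i,d)$ does its key work via Lemma~\ref{le:upowerII}, which guarantees $d^{p^{a_i}} \in U_{a_i+1}$. This lets me expand the action of $\sigma-d$ on $K_{a_{m-1}}^\times$ as the cyclic coboundary $\sigma^{p^{a_i}}-1$ (corresponding to $\Gal(K_{a_{m-1}}/K_{a_i})$, of order $p^r$) plus a $p$-divisible twist, so that an iterated application of Hilbert~90 to the cyclic extension $K_{a_{m-1}}/K_{a_i}$ should yield the needed $\epsilon$, with the error sitting in $p^{r+1}K^\times$.

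The $[\beta]_1$ hypothesis of Lemma~\ref{le:excmod} is typically automatic: when $m-1-r \ge 1$ one has $[\beta]_1 = 0$ in $J_1$ since $\beta$ is a nontrivial $p$-th power; in the boundary case $m-1-r = 0$ (forcing $i = 0$ and $r = m-1$), $\beta = \epsilon$ lies in $K_{a_{m-1}}^\times \subset K_{n-1}^\times$ so long as $a_{m-1} < n$, and the edge case $a_{m-1} = n$ can be handled by arranging $[\epsilon]_1 \in (\sigma-1)J_1$ via a Hilbert~90 adjustment within the Kummer class. The main obstacle I anticipate will be the bookkeeping needed to confirm that the $p^{r+1}$-divisible error in the construction of $\epsilon$, after multiplication by $p^{m-1-r}$, lies entirely within $p^m K^\times$ rather than contaminating the intermediate $\check\delta_j$ for $j < m-1$; this will lean heavily on the non-exceptional hypothesis on $(p,a_i,d)$ and on the strict monotonicity $a_0 < a_1 < \cdots < a_{m-1}$ coming from Proposition~\ref{pr:basic.norm.pair.inequality}.
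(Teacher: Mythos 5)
Your plan --- find $\beta$ with $(\sigma-d)\beta \equiv p^{m-1}\delta_{m-1} \pmod{p^m K^\times}$, keep $\check\delta_j = \delta_j$ for every $j \le m-2$, and push all error into $\check\delta_m$ --- cannot succeed in general, and this is not just a bookkeeping difficulty. The mechanism for cancelling $p^{m-1}\delta_{m-1}$ is that $\phi_d(P(a_{m-1},a_i)) \equiv p^{a_{m-1}-a_i} \pmod{p^{a_{m-1}-a_i+1}}$ (Lemma~\ref{le:phiII}\eqref{it:phi2II}, which is where the hypothesis $p>2$, $a_i>0$, or $d\in U_2$ enters), so $p^{m-1-(a_{m-1}-a_i)}P(a_{m-1},a_i)-p^{m-1}$ lies in $\langle \sigma-d,\,p^m\rangle \subset \Z G$. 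Applying this to $\delta_{m-1}$ necessarily produces the term $p^{m-1-(a_{m-1}-a_i)}\,N_{K_{a_{m-1}}/K_{a_i}}(\delta_{m-1})$, which is only $p^{m-1-(a_{m-1}-a_i)}$-divisible and hence cannot be absorbed into $\check\delta_m$. This norm residue has to land somewhere, and the only consistent place is $\check\delta_i$: the hypothesis $a_{m-1}-a_i \le m-1-i$ gives $m-1-(a_{m-1}-a_i)\ge i$, and the norm lies in $K_{a_i}^\times$, so one sets $\check\delta_i = \delta_i + p^{(m-1-i)-(a_{m-1}-a_i)}N_{K_{a_{m-1}}/K_{a_i}}(\delta_{m-1})$ --- still in $K_{a_i}^\times$, so the target norm vector is unchanged. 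This is exactly what the paper's proof does (taking $\check\alpha=\alpha+Q\delta_{m-1}$ and modifying $\check\delta_i$ and $\check\delta_m$), and it is the step your outline omits.

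A secondary issue: the heuristic that $\sigma-d$ acting on $K_{a_{m-1}}^\times$ expands as $\sigma^{p^{a_i}}-1$ plus a $p$-divisible twist is not correct --- modulo $p$ one has $\sigma-d\equiv\sigma-1$ while $\sigma^{p^{a_i}}-1\equiv(\sigma-1)^{p^{a_i}}$, and these differ whenever $a_i>0$. The right algebraic input is the congruence on $\phi_d(P(a_{m-1},a_i))$ above, not a factorization of $\sigma-d$. Your identification of Lemma~\ref{le:excmod} as the closing step, and your treatment of the boundary case $a_{m-1}=n$ via a Hilbert~90-type adjustment, are on the right track and correspond to what the paper does.
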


\begin{proof}
    Assume the hypotheses of the lemma, and to simplify notation set $t=m-1$.
    By Lemma~\ref{le:phiII}(\ref{it:phi2II}),
    \begin{equation*}
        \phi_d(P(a_{t},a_i)) = p^{a_{t}-a_i} \pmod{p^{a_{t}-a_i+1}}.
    \end{equation*}
    Since $t-i\ge a_{t}-a_i$, we have
    $p^{t-(a_{t}-a_i)}\in \Z$, and multiplying by it we find
    \begin{equation*}
        \phi_d(p^{t-(a_{t}-a_i)}P(a_{t},a_i)) = p^{t} \pmod{p^{t+1}}.
    \end{equation*}
    Hence there exists $Q\in \Z G$ such that
    \begin{equation*}
        (\sigma-d)Q = p^{t-(a_{t}-a_i)}P(a_{t},a_i)-p^{t}
        \pmod{p^{t+1}\Z G}.
    \end{equation*}

    We claim that
    \begin{equation}\label{eq:whenQisinI}
    \mbox{if }a_{t}=n \mbox{ then } Q\in \langle
    (\sigma-1),p\rangle\subset \Z G,
    \end{equation}
    as follows.  We first reduce
    the previous equation modulo $p \Z G$ so that we obtain an
    equation in $\Fp G$.  Observing that $t\ge 1$,
    $P(a_{t},a_i)=(\sigma-1)^{p^{a_{t}}-p^{a_i}}$ in $\Fp G$, and, by the definition of norm pair, $d\in U_1$,
    we have
    \begin{equation*}
        (\sigma-1)Q =
        p^{t-(a_{t}-a_i)}(\sigma-1)^{p^{a_{t}}-p^{a_i}} \qquad
        \text{in\ } \Fp G.
    \end{equation*}
    Now suppose that $a_{t}=n$ and $Q\not\in \langle (\sigma-1), p\rangle \subset \Z G$.  Then $Q$ reduces to a unit in $\Fp G$. If $t> a_{t}-a_i$, we have a contradiction: $\langle\sigma-1\rangle \neq \{0\}$ in $\Fp G$.  Otherwise, since $t-i\ge a_{t}-a_i$ we obtain $t-i=a_{t}-a_i$ and $i=0$. We then have
    \begin{equation*}
        (\sigma-1)Q = (\sigma-1)^{p^{a_{t}}-p^{a_i}} \qquad
        \text{in\ } \Fp G.
    \end{equation*}
    Considering the ideals generated by $\left \{(\sigma-1)^l:l\in \{1,\cdots,p^n\}\right\}$ are distinct in $\Fp G$ and the fact that $Q$ is a unit in $\Fp G$, we deduce that $p^{a_{t}}-p^{a_i}=1$.  Since $a_i, a_{t}\neq -\infty$, we have $a_i=0$, $a_{t}=1$, and $p=2$.  We already determined that $i=0$, so $a_0=0$.  But then $p=2$, $n=1$ and $a_0=0\neq -\infty$, contradicting Lemma \ref{lem:p.2.and.n.1.gives.a0.minus.infinity}.  Hence if $a_{t}=n$ we have $Q\in \langle (\sigma-1),p\rangle$.

    Now by Corollary~\ref{cor:contacting.exceptional.elements}, $\alpha$ is
    $1$-exceptional.  Let $T\in \Z G$ satisfy
    \begin{equation*}
        (\sigma-d)Q = p^{t-(a_{t}-a_i)}P(a_{t},a_i)-p^{t} +
        p^{t+1}T.
    \end{equation*}
    Set
    \begin{align*}
        \check \alpha = \alpha + Q\delta_{t} &\qquad\qquad 
        \check \delta_l = \delta_l \quad(\text{for } 0\le l< m, \
        l\neq i, t\\
        \check \delta_{t} = 0 & \qquad\qquad
        \check \delta_i = \delta_i +
        p^{t-i-(a_{t}-a_i)}P(a_{t},a_i)\delta_{t} \\ 
        \check \delta_{m} = \delta_m + T\delta_{t}&\qquad\qquad \quad = \delta_i +
        p^{t-i-(a_{t}-a_i)}N_{K_{a_{t}}/K_{a_i}}(\delta_{t}).
    \end{align*}
    Since
    \begin{equation*}
        (\sigma-d)Q\delta_{t} =
        p^{t-(a_{t}-a_i)}P(a_{t},a_i)\delta_{t}-p^{t}\delta_{t}
        +p^{t+1}T\delta_{t}
    \end{equation*}
    and recalling that $m=t+1$, we obtain
    \begin{equation*}
        (\sigma-d)\check \alpha = \sum_{l=0}^{m} p^l \check
        \delta_l.
    \end{equation*}

    Since $Q\delta_{t}\in K_{n-1}^\times$ if $a_{t}<n$ and statement (\ref{eq:whenQisinI}) gives $Q\delta_{t} \in (\sigma-1)K^\times + pK^\times$ otherwise, we may invoke Lemma~\ref{le:excmod} and obtain that $(\check\alpha,\{\check\delta_l\})$ represents $(\mathbf{a}, d)$.  But since $\check\delta_{t}=0$ we have moreover that $(\check\alpha,\{\check\delta_l\})$ represents $((a_0,\dots,a_{t-1},-\infty),d)$, as desired.
\end{proof}
We are now ready to prove Proposition~\ref{pr:nv3}.
\begin{proof}[Proof of Proposition~\ref{pr:nv3}]
    We derive each reduction statement by altering $\alpha$ in the fashion of Lemma~\ref{le:excmod}, using either Lemma~\ref{le:delicate} or a relation obtained from Lemma~\ref{le:upowerII} or Lemma~\ref{le:phiII}.

    (\ref{it:nv3w}).  Suppose that $0\le i<m$, $1\le j<m-i$, $a_i$, $a_{i+j}\neq -\infty$, and $a_{i+j}\le a_i+j$.  We will derive a contradiction.  By Proposition~\ref{pr:norm.pair.extension.and.contraction} we may assume that $m=i+j+1$ and we have $m\ge 2$.   Suppose also that we are not in the case $p=2$, $d\not\in U_2$, $i=a_i=0$.  Then by Proposition~\ref{pr:basic.norm.pair.inequality}(\ref{it:p.2.d.3.mod.4.means.no.zero.ai}), if $p=2$ and $d\not\in U_2$ then $a_i\neq 0$.  Hence we have at least one of $p>2$, $d\in U_2$, or $a_i> 0$.

    Now let $t=i+j$.  Then by Lemma~\ref{le:delicate}, we obtain the existsnce of $(\check\alpha,\{\check\delta_l\})$ representing $((a_0,\dots,a_{i+j-1},-\infty),d)$.  But then since $(\mathbf{a},d) \preceq_m ((a_0,\dots,a_{i+j-1},-\infty),d)$, we have reached a contradiction of $a_{i+j}\neq -\infty$, as desired.

    (\ref{it:nv3x}).  Suppose that $d\in U_t\setminus U_{t+1}$ for some $t$ with $1\le t<m$ if either $p>2$ or $d\in U_2$, or that $p=2$ and $d \in -U_t\setminus -U_{t+1}$ for some $w \geq 2$.  Suppose moreover that for some $0\le k<m-t$, $a_{t+k}\le k$ and $a_{t+k}\neq -\infty$.  We will derive a contradiction.  By Proposition~\ref{pr:norm.pair.extension.and.contraction} we may assume that $m=t+k+1$.  Furthermore if $p=2$ and $d \not\in U_2$, then $a_{t+k}\neq 0$ by Proposition \ref{pr:basic.norm.pair.inequality}; hence by Lemma~\ref{le:upowerII},
    \begin{equation*}
        d^{p^{a_{t+k}}} -1 = xp^{t+a_{t+k}}
    \end{equation*}
    for some $x\in \Z\setminus p\Z$.  Let $y$ satisfy $xy=1 \pmod{p^m}$.  Since $k\ge a_{t+k}$, we have that $yp^{k-a_{t+k}}\in \Z$, and, multiplying by it, we find
    \begin{equation*}
        yp^{k-a_{t+k}}(d^{p^{a_{t+k}}}-1) = p^{t+k} \pmod{p^m}.
    \end{equation*}
    Hence
    \begin{equation*}
        \phi_d(yp^{k-a_{t+k}}(\sigma^{p^{a_{t+k}}}-1) - p^{t+k})
        = 0 \pmod{p^m}.
    \end{equation*}
    Hence there exist $Q, T\in \Z G$ satisfying
    \begin{equation*}
        (\sigma-d)Q = yp^{k-a_{t+k}}(\sigma^{p^{a_{t+k}}}-1) - p^{t+k} + p^mT.
    \end{equation*}

Observe that if $a_{t+k}=n$, then the previous equation considered in $\F_pG$ is $(\sigma-d)Q=0$.  Hence we have $Q \in \langle \sigma-1,p\rangle \subseteq \Z G$ in this case.

%    We claim that $a_{t+k}<n$, as follows.  Observe that if $a_{t+k}=n$ then by (\ref{it:twn}), $d\in U_m$.  Now if $p>2$ or $d \in U_2$ this contradicts the assumption that $d \in U_t \setminus U_{t+1}$ where $t<m$.  Otherwise we are in the case $p=2$ and $d \in -U_t \setminus -U_{t+1}$, and since $0\le k<m-t$ and $t\ge 2$, we have $m\ge 3$, in which case $d \in U_m$ is a clear contradiction. Hence in either case we have $a_{t+k}<n$.

    Now suppose that $(\alpha,\{\delta_l\})$ represents $(\mathbf{a},d)$.  By Corollary~\ref{cor:contacting.exceptional.elements}, we have that $\alpha$ is
    $1$-exceptional.
    Set
    \begin{align*}
        \check \alpha &= \alpha + Q\delta_{t+k} \\
        \check \delta_l &= \delta_l, \qquad\qquad\qquad 0\le l< m, \
        l\neq t+k\\
        \check \delta_{t+k} &= 0 \\
        \check \delta_{m} &= \delta_m + T\delta_{t+k}.
    \end{align*}
    Since $\delta_{t+k}\in K_{a_{t+k}}^\times$,
    $(\sigma^{p^{a_{t+k}}}-1)\delta_{t+k}=0$.  From
    \begin{align*}
        (\sigma-d)Q\delta_{t+k} &=
        yp^{k-a_{t+k}}(\sigma^{p^{a_{t+k}}}-1)\delta_{t+k} -
        p^{t+k}\delta_{t+k} + p^mT\delta_{t+k} \\ &=
        -p^{t+k}\delta_{t+k} + p^mT\delta_{t+k}
    \end{align*}
    we obtain
    \begin{equation*}
        (\sigma-d)\check \alpha = \sum_{l=0}^{m} p^l \check
        \delta_l.
    \end{equation*}

    Since $Q\delta_{t+k}\in K_{n-1}^\times$ if $a_{t+k} <n$, and $Q\delta_{t+k} \in (\sigma-1)K^\times + pK^\times$ otherwise, we may invoke
    Lemma~\ref{le:excmod} and obtain, as before, that
    $(\check\alpha,\{\check\delta_l\})$ represents
    $((a_0,\dots,a_{t+k-1},-\infty),d)$.  Since
    \begin{equation*}
        (\mathbf{a},d) \preceq_m
        ((a_0,\dots,a_{t+k-1},-\infty),d),
    \end{equation*}
    we have reached a contradiction of $a_{t+k}\neq -\infty$, as desired.

    (\ref{it:nv3y}).  We proceed as in Lemma~\ref{le:delicate}.  Assume that $p=2$ and $d\in -U_t\setminus -U_{t+1}$ for some $2\le t$.  Assume moreover that for some $0\le k\le m-t$, $a_{t+k-1}\le k$ and $a_{t+k-1}\neq -\infty$.  We will derive a contradiction.  By Proposition~\ref{pr:norm.pair.extension.and.contraction} we may assume that $m=t+k$. 

    Note that $t+k-1 >0$, and hence by Proposition \ref{pr:basic.norm.pair.inequality}(\ref{it:p.2.d.3.mod.4.means.no.zero.ai}) we have $a_{t+k-1}>0$.  Therefore by Lemma~\ref{le:phiII}(\ref{it:phi3II}),
    \begin{equation*}
        \phi_d(P(a_{t+k-1},0)) = 2^{a_{t+k-1}+t-1} \pmod{
        2^{a_{t+k-1}+t}}.
    \end{equation*}
    Since $a_{t+k-1} \le k$, we may multiply by
    $2^{k-a_{t+k-1}}$ to find
    \begin{equation*}
        \phi_d(2^{k-a_{t+k-1}}P(a_{t+k-1},0)) = 2^{t+k-1} \pmod{
        2^{t+k}}.
    \end{equation*}
    Hence there exists $Q\in \Z G$ such that
    \begin{equation*}
        (\sigma-d)Q = 2^{k-a_{t+k-1}}P(a_{t+k-1},0)-2^{t+k-1}
        \pmod{2^{t+k}\Z G}.
    \end{equation*}

    We claim that if $a_{t+k-1}=n$ then $Q\in \langle (\sigma-1),2\rangle\subset \Z G$, as follows.  We first reduce the previous equation modulo $2 \Z G$ so that we obtain an equation in $\Ft G$.  Observing that     $P(a_{t+k-1},0)=(\sigma-1)^{2^{a_{t+k-1}}-1}$ in $\Ft G$, $t+k-1\ge 1$, and, by the definition of norm pair, $d\in U_1$, we have
    \begin{equation*}
        (\sigma-1)Q =
        2^{k-a_{t+k-1}}(\sigma-1)^{2^{a_{t+k-1}}-1} \qquad
        \text{in\ } \Ft G.
    \end{equation*}
    Now suppose that $a_{t+k-1}=n$ and $Q\not\in \langle (\sigma-1), 2\rangle \subset \Z G$.  Then $Q$ reduces to a unit in $\Ft G$. If $k> a_{t+k-1}$, we have a contradiction: $\langle\sigma-1\rangle \neq \{0\}$ in $\Ft G$.  Hence $k=a_{t+k-1}$.  Considering
    \begin{equation*}
        (\sigma-1)Q = (\sigma-1)^{2^{a_{t+k-1}}-1} \qquad
        \text{in\ } \Ft G,
    \end{equation*}
    alongside the fact that the ideals generated by $\left\{(\sigma-1)^l: l\in\{1, \cdots, 2^n\}\right\}$ are distinct in $\Ft G$ and that $Q$ is a unit in $\Ft G$, we deduce that $2^{a_{t+k-1}}-1=1$ and so $a_{t+k-1}=1$.  But then $p=2$, $n=a_{t+k-1}=1$ and $a_0=0\neq -\infty$, contradicting Lemma \ref{lem:p.2.and.n.1.gives.a0.minus.infinity}.  Hence if $a_{t+k-1}=n$ we have $Q\in \langle (\sigma-1),2\rangle$.

    Now suppose that $(\alpha,\{\delta_l\})$ represents $(\mathbf{a},d)$.  By Corollary~\ref{cor:contacting.exceptional.elements}, we have that $\alpha$ is
    $1$-exceptional.  Let $T\in \Z G$ satisfy
    \begin{equation*}
        (\sigma-d)Q = 2^{k-a_{t+k-1}}P(a_{t+k-1},0)-2^{t+k-1} +
        2^{t+k}T.
    \end{equation*}
    Set
    \begin{align*}
        \check \alpha &= \alpha + Q\delta_{t+k-1} \\
        \check \delta_l &= \delta_l, \qquad\qquad\qquad 0\le l< m, \
        l\neq 0, t+k-1\\
        \check \delta_0 &= \delta_0 +
        2^{k-a_{t+k-1}}P(a_{t+k-1},0)\delta_{t+k-1} \\ &= \delta_0 +
        2^{k-a_{t+k-1}}N_{K_{a_{t+k-1}}/F}(\delta_{t+k-1})\\
        \check \delta_{t+k-1} &= 0 \\
        \check \delta_{m} &= \delta_m + T\delta_{t+k-1}.
    \end{align*}
    Since
    \begin{equation*}
        (\sigma-d)Q\delta_{t+k-1} =
        2^{k-a_{t+k-1}}P(a_{t+k-1},0)
        \delta_{t+k-1}-2^{t+k-1}\delta_{t+k-1}
        +2^{t+k}T\delta_{t+k-1}
    \end{equation*}
    we obtain
    \begin{equation*}
        (\sigma-d)\check \alpha = \sum_{l=0}^{m} 2^l \check
        \delta_l.
    \end{equation*}

    Since $Q\delta_{v+k-1}\in K_{n-1}^\times$ if $a_{t+k-1}<n$ and $Q\delta_{t+k-1} \in (\sigma-1)K^\times + 2K^\times$ otherwise, we may invoke Lemma~\ref{le:excmod} and obtain, as before, that
    $(\check\alpha,\{\check\delta_l\})$ represents
    $((a_0,\dots,a_{t+k-2},-\infty),d)$.  Since
    \begin{equation*}
        (\mathbf{a},d) \preceq_m
        ((a_0,\dots,a_{t+k-2},-\infty),d),
    \end{equation*}
    we have reached a contradiction of $a_{t+k-1}\neq -\infty$, as
    desired.
\end{proof}

\section{Exceptional Elements}\label{se:excelts}

In this section we show that elements $(\alpha,\{\delta_i\})$ which represent a minimal norm pair satisfy some basic module-theoretic conditions in $J_1$: each $\delta_i \in K_{a_i}^\times$ generates a free $\F_pG_{a_i}$-module (Proposition \ref{pr:deltafree}), and $\{\alpha,\delta_1,\cdots,\delta_{m-1}\}$ can be chosen as a part of an $\F_pG$-basis for $J_1$ (Proposition \ref{prop:picking.the.right.basis}).  As we'll see in section \ref{sec:exceptional.modules}, these results  play a critical role in controlling the appearance of ``new" dependencies in $J_m$, and hence its overall module structure.

\begin{proposition}\label{pr:deltafree}
Suppose $m\in \N$ and $(\alpha,\{\delta_i\})$ represents a
    minimal norm pair $(\mathbf{a},d)$.   Then for
    $0\le i<m$ and $a_i\neq -\infty$, the $\Fp G_{a_i}$-module $\langle
    [\delta_i]_1\rangle$ is free.
\end{proposition}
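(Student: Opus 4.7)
The approach will be a proof by contradiction: I assume $a_i \neq -\infty$ yet $\langle [\delta_i]_1\rangle$ is not free as an $\Fp G_{a_i}$-module, and I build from this hypothesis a representative of a strictly smaller norm pair, contradicting the minimality of $(\mathbf{a},d)$. The first order of business is to translate the non-freeness into an arithmetic statement: by Lemma~\ref{le:idealrmgiII} applied with $m=1$, every nonzero ideal of $\Fp G_{a_i}$ contains $(\sigma-1)^{p^{a_i}-1} = P(a_i,0)$, so the failure of freeness forces $(\sigma-1)^{p^{a_i}-1}[\delta_i]_1 = [0]_1$ in $J_1$. Since $P(a_i,0)$ applied to $\delta_i$ returns $N_{K_{a_i}/F}(\delta_i)$, this gives $[N_{K_{a_i}/F}(\delta_i)]_1 = [0]_1$; in additive notation, $N_{K_{a_i}/F}(\delta_i) = p\mu$ for some $\mu \in K^\times$.

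With this vanishing condition in hand, I would next build a modification of $(\alpha,\{\delta_j\})$ in the style of Lemma~\ref{le:delicate} and the proofs of Proposition~\ref{pr:nv3}. The plan is to produce $Q \in \Z G$ so that $(\sigma-d)Q$ reproduces the contribution of $p^i\delta_i$ modulo higher powers of $p$, using the vanishing of $N_{K_{a_i}/F}(\delta_i)$ modulo $p$ together with Lemma~\ref{le:phiII} to supply the needed $p$-adic valuation of $\phi_d(P(a_i,0))$. Setting $\check\alpha = \alpha + Q\delta_i$, $\check\delta_i = 0$, and redefining the remaining $\check\delta_j$ for $j > i$ to absorb the residual terms of $(\sigma-d)(Q\delta_i)$, should yield a tuple satisfying the first norm-pair equation with norm vector $(a_0,\dots,a_{i-1},-\infty,a_{i+1},\dots,a_{m-1})$, which is strictly smaller than $\mathbf{a}$. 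To complete the contradiction, I would invoke Lemma~\ref{le:excmod}, using the fact that $\alpha$ is $1$-exceptional (by Corollary~\ref{cor:contacting.exceptional.elements}), to automatically promote this to a genuine norm-pair representative, provided the containment $Q\delta_i \in [K_{n-1}^\times]_1 + (\sigma-1)J_1$ holds.

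The main obstacle will be verifying exactly this last containment on $Q\delta_i$. When $a_i < n$ we have $\delta_i \in K_{n-1}^\times$ and the containment is automatic; the delicate case is $a_i = n$, where one must argue that $Q$ reduces, modulo $p$, into the augmentation ideal $\langle \sigma-1\rangle \subset \Fp G$. I expect this will follow the same general template as the analogous arguments for the polynomials $Q$ appearing in the proofs of Lemma~\ref{le:delicate} and Proposition~\ref{pr:nv3}\ref{it:nv3x}, where a comparison of the distinct ideals generated by powers of $\sigma-1$ in $\Fp G$, combined with the strict gap inequalities of Proposition~\ref{pr:basic.norm.pair.inequality} and Lemma~\ref{lem:p.2.and.n.1.gives.a0.minus.infinity}, rules out the only obstructing degenerate configuration (namely $p=2$, $n=1$, $a_0=0$). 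A modest separate treatment is likely needed to handle the $p=2$, $d \not\in U_2$ case via Lemma~\ref{le:phiII}\ref{it:phi3II}, mirroring the bifurcations that appear throughout Section~\ref{se:np2}.
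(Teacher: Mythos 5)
Your opening step is sound: non-freeness of $\langle[\delta_i]_1\rangle$ together with Lemma~\ref{le:idealrmgiII} at $m=1$ does give $(\sigma-1)^{p^{a_i}-1}[\delta_i]_1 = [0]_1$, i.e.\ $N_{K_{a_i}/F}(\delta_i)\in pK^\times$. But the plan from there has a genuine gap. The $Q$-constructions of Lemma~\ref{le:delicate} and Proposition~\ref{pr:nv3} work because the ``residual'' term that appears after setting $\check\alpha=\alpha+Q\delta_t$ is a controlled relative norm $N_{K_{a_t}/K_{a_i}}(\delta_t)$, which lands in a prescribed intermediate field $K_{a_i}^\times$ where it can be absorbed into $\check\delta_i$. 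In your setting, the only arithmetic handle is $P(a_i,0)\delta_i = p\mu$, and the element $\mu\in K^\times$ carries no field-level control at all. Concretely: Lemma~\ref{le:phiII} gives $\phi_d(P(a_i,0))\equiv p^{a_i}\pmod{p^{a_i+1}}$, so to cancel the $p^i\delta_i$ term you would need to scale by $p^{i-a_i}$; the residual $p^{i-a_i}P(a_i,0)\delta_i = p^{i-a_i+1}\mu$ then shows up in position $\check\delta_{i-a_i+1}$. When $a_i>0$ (which is forced for $i>0$ whenever any earlier $a_j\neq -\infty$, by Proposition~\ref{pr:basic.norm.pair.inequality}\ref{it:norm.pairs.are.increasing}), that index is $\le i$, and you would need $\mu\in K_{a_{i-a_i+1}}^\times$ to even have a candidate norm pair --- but nothing forces that. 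When $a_i>i$ the scaling $p^{i-a_i}$ is not even an integer, and replacing $P(a_i,0)$ by $P(a_i,j)$ with $j=a_i-i$ pushes the residual into position $0$, landing in $K_{a_i-i}^\times$; Proposition~\ref{pr:nv3}\ref{it:nv3w} gives $a_i-i>a_0$, so the condition $\check\delta_0\in K_{a_0}^\times$ again fails. Either way the resulting tuple does not represent a norm pair, and the contradiction with minimality does not materialize.

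The paper's proof avoids this problem entirely by not trying to absorb $\delta_i$ via a $(\sigma-d)Q$ correction. Instead it takes the explicit decomposition of $J_1$ from \cite[Th.~2]{MSS}, writes $[\delta_i]_1$ in that basis, and separates the piece that cannot be pushed into $(\sigma-d)J_1+pJ_1$: a sum $\sum_{\gamma\in\Ycc_{\le a_i}}c_\gamma[\gamma]_1$ with scalar coefficients. This requires also perturbing $d$ to $\check d = d+p^ig_\alpha$ (something your modification does not account for) and invoking Lemma~\ref{le:excmod} to re-certify the norm pair. The contradiction extracted is not $a_i\mapsto-\infty$ but the milder $a_i\mapsto a_i-1$ (if all $c_\gamma$ with $\gamma\in\Ycc_{a_i}$ vanish, then $\check\delta_i\in K_{a_i-1}^\times$), and the positive conclusion --- that some $c_\gamma\neq 0$ with $\gamma\in\Ycc_{a_i}$, hence $[\delta_i]_1$ projects onto a free direct summand with unit coefficient --- is what forces freeness. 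I'd also note the paper handles $i=0$ separately via Proposition~\ref{pr:MSS}\ref{it:MSS3}, a shortcut your sketch doesn't mention.
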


\begin{proof}
By Corollary~\ref{cor:contacting.exceptional.elements}, $m$-exceptional elements are
    $1$-exceptional.  We consider the case $i=0$ first.  Since $d\in
    U_1$, $$(\sigma-d)[\alpha]_1 = (\sigma-1)[\alpha]_1 = [\delta_0]_1,$$ and then
    the case $i=0$ follows directly from Proposition~\ref{pr:MSS}(\ref{it:MSS3}).

    Assume then that $i>0$ and $a_i\ge 0$, and assume the other
    hypotheses of the proposition.  By Proposition~\ref{pr:norm.pair.extension.and.contraction}, we
    may assume that $m=i+1$.

%    If $a_0=-\infty$ then certainly $a_i>a_0$, and by
%    Proposition~\ref{pr:basic.norm.pair.inequality}(\ref{it:norm.pairs.are.increasing}), if $a_0\neq
%    -\infty$ then $a_i>a_0$.

%    By Lemma~\ref{le:MSSstruct}, if $a_i < n$ then
%    \begin{equation*}
%        [K_{a_i}^\times]_1 = \sum_{l=0}^n Y_l
%    \end{equation*}
%    where
%    \begin{enumerate}
%        \item $Y_l\subset [K_l^\times]_1$ for $l\le a_i$,
%        \item $Y_l$ is a free $\Fp G_l$-module for $l < a_i$
%        \item $Y_l$ is a free $\Fp G_{a_i}$-module for $l\ge a_i$
%        \item $Y_l\subset (\sigma-d)J_1$ for $l>a_i$.
%    \end{enumerate}
    By \cite[Th.~2]{MSS} we have $$J_1= \langle [\alpha]_1 \rangle \oplus \bigoplus_{i=0}^n Y_l,$$ where each $Y_l \subset [K_l^\times]_1$ is a free $\F_pG_l$-module.  Let $\Ycc_l\subset
    K_{l}^\times$ be chosen such the elements form an $\Fp
    G_{l}$-base for $Y_l$ modulo $pK^\times$: $\bigoplus_{\gamma\in \Ycc_l} \langle[\gamma]_1\rangle=Y_l$.  Observe that if $\gamma\in \Ycc_l$, then
    $\langle [\gamma]_1\rangle$ is a free $\Fp
    G_{l}$-module.

	We then have 
    \begin{equation*}
        \Ycc = \{\alpha\} \cup \bigcup_{l=0}^n \Ycc_l
    \end{equation*}
    is an $\Fp G$-base for $J_1$.  For $0\le l\le n$, write
    $\Ycc_{\le l}$ for $\Ycc_0 \cup \cdots \cup \Ycc_l$, and for
    $0\le l<n$, write $\Ycc_{>l}$ for $\Ycc_{l+1}\cup \cdots \cup
    \Ycc_n$.

%    Now we are given that $(\sigma-d)\alpha = \sum_{k=0}^{m} p^k\delta_k$
%    with $\delta_k\in K_{a_k}^\times$ for all $0\le k<m$.

    Express $[\delta_i]_1$ as a combination from $\Ycc$:
    \begin{equation*}
        [\delta_i]_1 = g_\alpha [\alpha]_1 + \sum_{\gamma\in \Ycc} g_\gamma
        [\gamma]_1, \quad \quad g_\alpha,g_\gamma \in \Z G,
    \end{equation*}
    with almost all $g_\gamma=0$. For
    each $g_\gamma$ with $\gamma\in\Ycc_{\le a_i}$ write
    \begin{equation*}
        g_\gamma=c_\gamma+(\sigma-d)e_\gamma  \pmod{p\Z G}, \qquad
        c_\gamma\in \{0, \cdots, p-1\},\ e_\gamma\in \Z G.
    \end{equation*}
    If $a_i<n$, then note that it must be the case that $g_\gamma \in \langle(\sigma-d)\rangle$ for all $\gamma \in \mathcal{Y}_{>a_i}$; if this were not the case, then $g_\gamma$ would be a unit for some $\gamma \in \mathcal{Y}_{>a_i}$, and hence $(\sigma-1)^{p^{a_i}}g_\gamma[\gamma]_1 \neq [0]_1$.  But then we'd have $(\sigma^{p^{a_i}}-1) = (\sigma-1)^{p^{a_i}} \not\in \ann_{\F_pG} [\delta_i]_1$, contrary to the fact that $\langle[\delta_i]_1\rangle$ is an $\F_pG_{a_i}$-module.  So for each $g_\gamma$ with $\gamma\in
    \Ycc_{>a_i}$, let $\gamma'\in K^\times$ satisfy
    $(\sigma-d)[\gamma']_1 = [\gamma]_1$.    
    
    Hence
    \begin{equation*}
        [\delta_i]_1 = g_\alpha [\alpha]_1 + \sum_{\gamma\in \Ycc_{\le a_i}}
        c_\gamma [\gamma]_1 + (\sigma-d)\left(\sum_{\gamma\in \Ycc_{\le
        a_i}} e_\gamma[\gamma]_1+\sum_{\gamma\in \Ycc_{>a_i}}
        g_\gamma[\gamma']_1\right).
    \end{equation*}
 Let
 \begin{equation*}
        \varepsilon = \sum_{\gamma\in \Ycc_{\le a_i}} e_\gamma\gamma+
        \sum_{\gamma\in \Ycc_{>a_i}} g_\gamma\gamma'
    \end{equation*}
    and observe that
    \begin{equation*}
        (\sigma-d)\varepsilon = \delta_i-g_\alpha \alpha - \sum_{\gamma
        \in \Ycc_{\le a_i}} c_\gamma \gamma + pk
    \end{equation*}
    for some $k \in K^\times$.
    Now set
    \begin{equation*}
\begin{array}{rlcrl}
        \check \alpha &= \alpha - p^i\varepsilon &\quad \quad & 
        \check d &= d+p^ig_\alpha,\\
		\check \delta_k &= \delta_k \quad \text{ for }\quad 0 \leq k < i \quad &&
		\check \delta_i &= \sum_{\gamma \in \mathcal{Y}_{\leq a_i}}c_\gamma \gamma\\
	\check \delta_{i+1} &= \delta_{i+1} - k + p^{i-1} g_\alpha\varepsilon.&&&
    \end{array}
    \end{equation*}

    Since $i\ge 1$ we have $[\alpha]_1=[\check\alpha]_1$.  Moreover, we calculate
    \begin{align*}
        (\sigma-\check d)&\check \alpha = (\sigma-d)\check\alpha
        - p^ig_\alpha\check\alpha = (\sigma-d)\alpha - p^i(\sigma-d)
        \varepsilon - p^ig_\alpha\alpha + p^{2i}g_\alpha\varepsilon \\
        &= (\sigma-d)\alpha - p^i\delta_i + p^ig_\alpha\alpha
        + p^i\sum_{\gamma\in \Ycc_{\le a_i}} c_\gamma\gamma -p^{i+1}k -p^ig_\alpha\alpha
        +p^{2i}g_\alpha \varepsilon\\
        &= (\sigma-d)\alpha -p^i\delta_i + p^i\sum_{\gamma\in \Ycc_{\le
            a_i}}c_\gamma\gamma - p^{i+1}k + p^{2i}g_\alpha\varepsilon\\
	&= \sum_{k=0}^{i+1} p^k\check \delta_k
    \end{align*}
    By construction $\check\delta_k\in K_{a_k}^\times$
    for all $0\le k\le i$.  Hence by Lemma~\ref{le:excmod},
    $(\check\alpha, \{\check\delta_k\})$ represents $(\mathbf{a},
    \check d)$.

    Now suppose $c_\gamma=0$ for all $\gamma\in \Ycc_{a_i}$.  This implies that $\check \delta_i \in K_{a_i-1}^\times$, so that $(\check \alpha,\{\check \delta_k\})$ represents the norm pair $((a_0, \cdots, a_{i-1}, a_i-1),\check d)$.  This contradicts the minimality of $(\mathbf{a}, d)$, and hence $c_\gamma \neq 0$ for some $\gamma \in \mathcal{Y}_{a_i}$.

    Now since $\delta_i\in K_{a_i}^\times$ we have $\langle
    [\delta_i]_1\rangle$ is an $\Fp G_{a_i}$-module. Now $\langle
    [\gamma]_1\rangle$ generates a free $\Fp G_{a_i}$-module which is
    a direct summand of $Y_{a_i}$, which is itself a direct summand
    of $J_1$.  Since $c_\gamma \neq 0$, we have  $c_\gamma$ is unit in
    $\Fp G_{a_i}$. Hence $\ann_{\Fp G_{a_i}} c_\gamma[\gamma]_1=\{0\}$,
    and then $\langle[\delta_i]_1\rangle$ is a free $\Fp
    G_{a_i}$-module as well.
\end{proof}

\begin{proposition}\label{prop:picking.the.right.basis}
Suppose $m \in \N$ and $(\alpha,\{\delta_i\}_{i=0}^{m-1})$ represents a minimal norm pair $(\mathbf{a},d)$ of length $m$. Write $\mathcal{X}_m = \{\alpha,\delta_1,\cdots,\delta_{m-1}\}\setminus\{0\}$.  Then there exists $\mathcal{T} \subset K^\times$ so that $$J_1 = \bigoplus_{x \in \mathcal{X}_m} \langle [x]_1\rangle \oplus \bigoplus_{t \in \mathcal{T}} \langle[t]_1\rangle$$ where $\langle[t]_1\rangle$ is a free $\F_pG_i$-module for any $t \in \mathcal{T} \cap (K_i^\times \setminus K_{i-1}^\times)$. 
\end{proposition}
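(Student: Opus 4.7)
The approach is to start from the MSS decomposition of $J_1$ and swap certain basis elements for the $\delta_j$'s. By Corollary~\ref{cor:contacting.exceptional.elements}, $\alpha$ is $1$-exceptional, and by Proposition~\ref{pr:MSS} it is MSS-exceptional; hence \cite[Th.~2]{MSS} provides a decomposition
\[
J_1 = \langle[\alpha]_1\rangle \oplus \bigoplus_{l=0}^n \bigoplus_{\gamma \in \Ycc_l} \langle[\gamma]_1\rangle,
\]
with $\Ycc_l \subseteq K_l^\times \setminus K_{l-1}^\times$ and each $\langle[\gamma]_1\rangle$ for $\gamma \in \Ycc_l$ a free $\Fp G_l$-module. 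Let $\mathcal{D} = \{j : 1 \leq j \leq m-1,\ \delta_j \neq 0\}$, so $\Xcc_m = \{\alpha\} \cup \{\delta_j : j \in \mathcal{D}\}$. By Proposition~\ref{pr:basic.norm.pair.inequality}(\ref{it:norm.pairs.are.increasing}) the values $\{a_j : j \in \mathcal{D}\}$ are strictly increasing in $j$, hence pairwise distinct.

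For each $j \in \mathcal{D}$, I would expand $[\delta_j]_1 = g_\alpha^{(j)}[\alpha]_1 + \sum_\gamma f_\gamma^{(j)}[\gamma]_1$ in the MSS basis. The argument in the proof of Proposition~\ref{pr:deltafree} produces representatives with $f_\gamma^{(j)} = c_\gamma^{(j)} + (\sigma-1)e_\gamma^{(j)}$, $c_\gamma^{(j)} \in \{0,\dots,p-1\}$, for $\gamma \in \Ycc_{\le a_j}$, and $f_\gamma^{(j)} \in (\sigma-1)\Fp G$ for $\gamma \in \Ycc_{>a_j}$. Crucially, that proof also exhibits some $\gamma_j \in \Ycc_{a_j}$ with $c_{\gamma_j}^{(j)} \neq 0$---else one would obtain a norm pair lexicographically smaller than $(\mathbf{a},d)$. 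Because the $a_j$'s are distinct, the $\gamma_j$'s (lying in disjoint $\Ycc_{a_j}$'s) are also distinct, and I define $\Tc = \bigcup_l \Ycc_l \setminus \{\gamma_j : j \in \mathcal{D}\}$. By the MSS decomposition, each $\langle [t]_1\rangle$ for $t \in \Tc \cap \Ycc_l$ is then a free $\Fp G_l$-module, giving the required summand structure. The plan is to verify
\[
J_1 = \langle[\alpha]_1\rangle \oplus \bigoplus_{j \in \mathcal{D}} \langle[\delta_j]_1\rangle \oplus \bigoplus_{t \in \Tc} \langle[t]_1\rangle.
\]
Since $\langle[\delta_j]_1\rangle$ is a free $\Fp G_{a_j}$-module of $\Fp$-dimension $p^{a_j}$ by Proposition~\ref{pr:deltafree}, matching $\dim_{\Fp}\langle[\gamma_j]_1\rangle$, the candidate right-hand side has the same $\Fp$-dimension as $J_1$. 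The claim therefore reduces to showing the sum spans $J_1$, i.e., that each $[\gamma_j]_1$ lies in the $\Fp G$-span of $\{[\alpha]_1\} \cup \{[\delta_j]_1 : j \in \mathcal{D}\} \cup \Tc$.

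The main obstacle is this spanning statement, which I expect to handle via a linear algebra reduction. Enumerate $\mathcal{D} = \{j_1 < \cdots < j_r\}$ so $a_{j_1} < \cdots < a_{j_r}$, and form the matrix $M \in M_r(\Fp G)$ with entries $M_{kl} = f_{\gamma_{j_l}}^{(j_k)}$. The structural facts above yield: each diagonal entry $M_{kk}$ has nonzero constant term (hence is a unit in $\Fp G$), while for $l > k$ one has $M_{kl} \in (\sigma-1)\Fp G$ since $a_{j_l} > a_{j_k}$ places $\gamma_{j_l} \in \Ycc_{>a_{j_k}}$. Reducing $M$ modulo the maximal ideal $(\sigma-1)\Fp G$ of the commutative local ring $\Fp G$ yields a lower-triangular matrix over $\Fp$ with nonzero diagonal, which is invertible; the standard lifting principle for matrices over local rings then gives invertibility of $M$ itself in $M_r(\Fp G)$. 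Multiplying the vector identity $M\cdot ([\gamma_{j_l}]_1)_l = \bigl([\delta_{j_k}]_1 - g_\alpha^{(j_k)}[\alpha]_1 - \sum_{t \in \Tc}f_t^{(j_k)}[t]_1\bigr)_k$ on the left by $M^{-1}$ expresses each $[\gamma_{j_l}]_1$ as an $\Fp G$-combination of elements in the target set, completing the proof.
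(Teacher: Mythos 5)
Your argument performs the same underlying swap as the paper's --- replacing, for each nonzero $\delta_j$ with $1\le j<m$, a free $\F_p G_{a_j}$-generator $\gamma_j\in\mathcal{Y}_{a_j}$ from the MSS decomposition by $\delta_j$ --- but you do all the replacements simultaneously and argue via invertibility of a single matrix over the local ring $\F_p G$, whereas the paper performs the swaps one at a time, re-expanding in the updated basis at each step and verifying spanning and independence by direct substitution. Your structural observations (unit diagonal, $(\sigma-1)$-divisibility above the diagonal by the containment $\gamma_{j_l}\in\mathcal{Y}_{>a_{j_k}}$ when $l>k$, existence and distinctness of the $\gamma_j$, and the lifting of invertibility from the residue field $\F_p G/(\sigma-1)\cong\F_p$) are all correct, and the matrix inversion does express the $[\gamma_{j_l}]_1$ in the desired span.

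There is, however, a gap in the step where you conclude that because the candidate right-hand side ``has the same $\F_p$-dimension as $J_1$,'' the claim ``therefore reduces to'' spanning. This inference is not valid when $J_1$ is infinite-dimensional over $\F_p$ --- and it typically is, since the paper makes no local-field hypothesis. For infinite-dimensional spaces, a spanning family of subspaces whose formal dimension sum matches the ambient dimension need not be independent (one can, for instance, add a redundant finite-dimensional subspace to a spanning family without changing the infinite cardinal). The fix is short but needs to be said: set $W=\bigoplus_{t\in\mathcal{T}}\langle[t]_1\rangle$ and pass to the \emph{finite-dimensional} quotient $J_1/W$, onto which $\langle[\alpha]_1\rangle\oplus\bigoplus_{j\in\mathcal{D}}\langle[\gamma_j]_1\rangle$ maps isomorphically. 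Your matrix argument shows the images of $[\alpha]_1$ and the $[\delta_j]_1$ span $J_1/W$, and since
\[
\dim_{\F_p}\langle[\alpha]_1\rangle+\sum_{j\in\mathcal{D}}\dim_{\F_p}\langle[\delta_j]_1\rangle
= \dim_{\F_p}\langle[\alpha]_1\rangle+\sum_{j\in\mathcal{D}}\dim_{\F_p}\langle[\gamma_j]_1\rangle
= \dim_{\F_p}(J_1/W),
\]
a dimension count now forces both that each $\langle[\delta_j]_1\rangle$ meets $W$ trivially and that the images in $J_1/W$ are independent; pulling back gives the asserted direct decomposition of $J_1$. With this repair, your proof is complete and constitutes a legitimate ``all-at-once'' alternative to the paper's iterative one.
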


\begin{proof}
Following the notation from the previous proof, we have
$$J_1 = \langle[\alpha]_1\rangle \oplus \bigoplus_{y \in \mathcal{Y}_0 \cup \cdots \cup \mathcal{Y}_n} \langle [y]_1\rangle,$$  where each $y \in \mathcal{Y}_i$ has $y \in K_i^\times$ and $\langle[y]_1\rangle$ is a free $\F_pG_i$-module.  As a consequence, it also follows that $y \not\in K_{i-1}^\times$.  Our goal will be to iteratively build sets $\mathcal{T}_j$ for $0 \leq j \leq m-1$ so that for all $j$ we have\begin{enumerate}
\item $J_1 = \langle [\alpha]_1\rangle \oplus \bigoplus_{t \in \mathcal{T}_j} \langle [t]_1\rangle$;
\item $\{\delta_k: \delta_k \neq 0 \text{ and } k \leq j\} \subset \mathcal{T}_j$; and
\item for any $t \in \mathcal{T}_j \cap (K_i^\times \setminus K_{i-1}^\times)$, we have $\langle [t]_1\rangle$ is a free $\F_pG_i$-module.%; and
%\item $|(\{\alpha\} \cup \mathcal{T}_j)\cap (K_i^\times \setminus K_{i-1}^\times)| = e_i(K/F).$
\end{enumerate}  Once we establish this result, clearly $\mathcal{T} = (\mathcal{T}_{m-1} \cup \{\alpha\})\setminus\mathcal{X}_m$ will have the desired properties.

We begin by defining $\mathcal{T}_0 = \mathcal{Y}_0 \cup \cdots \cup \mathcal{Y}_n$.  We have already observed it has the desired properties. %except the final one.  By construction
%\begin{align*}
%|(\{\alpha\} \cup \mathcal{T}_0) \cap (K_i^\times\setminus K_{i-1}^\times)| &= |\mathcal{Y}_i| = \rank_{\F_pG}Y_i \quad \text{ for }i<n\\
%|(\{\alpha\} \cup \mathcal{T}_0) \cap (K_n^\times\setminus K_{n-1}^\times)| &= |\mathcal{Y}_n|+1 = \rank_{\F_pG}Y_n+1.
%\end{align*}

So assume we have constructed $\mathcal{T}_{j-1}$ satisfying the conditions above.  If $\delta_j = 0$, then define $\mathcal{T}_j = \mathcal{T}_{j-1}$, and we are done.  Otherwise, write 
\begin{equation}\label{eq:expressing.delta.j.in.old.basis}
[\delta_j]_1 = g_\alpha [\alpha]_1 + \sum_{t \in \mathcal{T}_{j-1}} g_t [t]_1.
\end{equation}  As in the proof of the last result, there exists some $t_j \in \mathcal{T}_{j-1}$ so that $t_j \in K_{a_j}^\times \setminus K_{a_j-1}^\times$ and $g_{t_j}$ is a unit in $\F_pG$.  In particular, this means we can multiply by some element $\hat g \in \F_pG$ with 
\begin{equation}\label{eq:expressing.old.basis.element.in.new.basis}[t_j]_1 = \hat g\left([\delta_j]_1-g_\alpha[\alpha]_1 - \sum_{t \neq t_j} g_t [t]_1\right).\end{equation}
We claim that $\mathcal{T}_j = \{\delta_j\} \cup (\mathcal{T}_{j-1} \setminus \{t_j\})$ satisfies the desired criteria.

That every $t \in \mathcal{T}_j \cap (K_i^\times\setminus K_{i-1}^\times)$ satisfies $\langle [t]_1\rangle \simeq \F_pG_i$ follows from Proposition \ref{pr:deltafree}.  To show that $\{\delta_k:\delta_k \neq 0 \text{ and }k \leq j\} \subset \mathcal{T}_j$, note that we cannot have $\delta_k = t_j$ for any $k<j$: Proposition \ref{pr:basic.norm.pair.inequality} gives $\delta_k \in K_{a_k}^\times \subset K_{a_j-1}^\times$, and hence $\langle [\delta_k]_1 \rangle \not\simeq \F_pG_{a_j} \simeq \langle [t_j]_1\rangle$.  Finally, we argue that $J_1 = \langle [\alpha]_1\rangle \oplus \bigoplus_{t \in \mathcal{T}_j} \langle[t]_1\rangle$.

First, suppose that $[z]_1 \in J_1$.  By induction we know that there exist $h_\alpha,h_t \in \F_pG$ so that $$[z]_1 = h_\alpha[\alpha]_1 + \sum_{t \in \mathcal{T}_{j-1}} h_t[t]_1.$$  Substituting our expression for $[t_j]_1$ above gives 
\begin{align*}
[z]_1 &= h_\alpha[\alpha]_1 + h_{t_j}[t_j]_1+\sum_{t \in \mathcal{T}_{j-1}\setminus\{t_j\}} h_t[t]_1 \\
&=(h_\alpha-h_{t_j}\hat g g_\alpha)[\alpha]_1 + h_{t_j}\hat g [\delta_j]_1 + \sum_{t \in \mathcal{T}_{j-1}\setminus\{t_j\}} (h_t-h_{t_j}\hat g g_t)[t]_1
\\&\in \langle[\alpha]_1\rangle + \sum_{t \in \mathcal{T}_j} \langle[t]_1\rangle.
\end{align*}

Now we argue that the sum is direct.  Suppose that we have a relation $$[0]_1 = h_\alpha[\alpha]_1 + h_{\delta_j}[\delta_j]_1 + \sum_{t \in \mathcal{T}_j\setminus\{\delta_j\}} h_t [t]_1.$$  If $h_{\delta_j} \in \ann_{\F_pG} \langle[\delta_j]_1\rangle$, then we have a relation amongst a subset of $\{\alpha\} \cup \mathcal{T}_{j-1}$, and hence the relation is trivial by induction. Therefore we may assume $h_{\delta_j} \not\in \ann_{\F_pG}\langle[\delta_j]_1\rangle$.  Substituting equation (\ref{eq:expressing.delta.j.in.old.basis}) into our relation gives
$$[0]_1 = (h_\alpha + h_{\delta_j}g_\alpha)[\alpha]_1 + h_{\delta_j}g_{t_j}[t_j]_1 + \sum_{t \in \mathcal{T}_{j-1}\setminus\{t_j\}} (h_t + h_{\delta_j}g_t)[t]_1.$$  Since $\ann_{\F_pG}\langle[\delta_j]_1\rangle = \ann_{\F_pG}\langle[t_j]_1\rangle$, this gives a nontrivial relation amongst $\{\alpha\} \cup \mathcal{T}_{j-1}$, a contradiction.  
\end{proof}

\begin{corollary}\label{cor:count.on.basis.elements}
For the set $\mathcal{T}$ constructed in Proposition \ref{prop:picking.the.right.basis} we have
    \begin{equation*}
        |\Tc \cap (K_i^\times\setminus K_{i-1}^\times)| = \left\{\begin{array}{ll} e_i(K/F)-\vert\{a_j=i\ :\ 0\le j<m\}\vert&\text{ if }
        i \neq n\\
        e_n(K/F)- \vert\{a_j=n\ :\ 0\le j<m\}\vert-1&\text{ if }i=n.\end{array}\right.
    \end{equation*}
\end{corollary}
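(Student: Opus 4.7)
The plan is to read off $|\Tc \cap (K_i^\times \setminus K_{i-1}^\times)|$ directly from the construction of $\Tc$ given in the proof of Proposition \ref{prop:picking.the.right.basis}, and then convert the rank counts $|\Ycc_l|$ into values of $e_l(K/F)$ using \cite[Cor.~1]{MSS}.

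First I would unpack the construction. The set $\Tc$ is obtained from $\Ycc_0 \cup \cdots \cup \Ycc_n$ (the $\F_pG_l$-bases for the free summands $Y_l$ in the decomposition $J_1 = \langle[\alpha]_1\rangle \oplus \bigoplus_l Y_l$ of \cite[Th.~2]{MSS}) by removing, for each $j \in \{1,\dots,m-1\}$ with $\delta_j \neq 0$, exactly one element $t_j \in \Ycc_{a_j}$; here each $\Ycc_l$ may be chosen inside $K_l^\times \setminus K_{l-1}^\times$.  Hence for $0 \le l \le n$,
\begin{equation*}
    |\Tc \cap (K_l^\times \setminus K_{l-1}^\times)| = |\Ycc_l| - |\{1 \le j < m : \delta_j \neq 0,\ a_j = l\}|.
\end{equation*}
The conditions $\delta_j\neq 0$ and $a_j\neq -\infty$ are equivalent (the forward direction uses the convention $K_{-\infty}^\times=\{0\}$; the backward direction uses Proposition \ref{pr:deltafree}, which forces $\langle[\delta_j]_1\rangle \simeq \F_pG_{a_j}$ to be nontrivial when $a_j\neq -\infty$).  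Hence the swap-count at level $l\ge 0$ is simply $|\{1 \le j < m : a_j = l\}|$.

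Next I would apply \cite[Cor.~1]{MSS} to the decomposition of $J_1$ above to extract
\begin{equation*}
    |\Ycc_l| = \begin{cases} e_l(K/F)-1, & l = a_0 \text{ or } l=n, \\ e_l(K/F), & \text{otherwise,} \end{cases}
\end{equation*}
with the $-1$ at level $n$ reflecting that the exceptional generator $\alpha \in K_n^\times\setminus K_{n-1}^\times$ is excluded from $\Ycc_n$, and the $-1$ at level $a_0$ (when $a_0\neq -\infty$) reflecting that the free $\F_p G_{a_0}$-submodule $\langle [\delta_0]_1 \rangle = \langle(\sigma-1)[\alpha]_1\rangle$ sitting inside $\langle[\alpha]_1\rangle$ consumes one spot at that level. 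Because $a_0<n$, the counts $|\{1\le j<m : a_j=n\}|$ and $|\{0\le j<m : a_j=n\}|$ coincide, while for $0\le l<n$ adding the $j=0$ term contributes $1$ precisely when $a_0=l$.  Substituting then yields the two cases stated in the Corollary.

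The only delicate point is extracting the correct rank formula from \cite[Cor.~1]{MSS}, namely tracking that the exceptional summand $\langle[\alpha]_1\rangle$ accounts for one level-$n$ generator (via $\alpha$) together with a free level-$a_0$ submodule (via $\delta_0$).  Once this rank formula is in hand, the remaining work is a routine index shift that absorbs the $j=0$ contribution into the swap count.
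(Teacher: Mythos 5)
Your bookkeeping is correct and matches the paper's strategy: both proofs track how the swap-construction in Proposition \ref{prop:picking.the.right.basis} alters the multiset $\Ycc_0\cup\cdots\cup\Ycc_n$, reduce to counting $|\Ycc_l|$, and observe that the $j=0$ index shift is absorbed because $a_0<n$. Your observation that $\delta_j\neq 0 \iff a_j\neq -\infty$ (via Proposition \ref{pr:deltafree}) and that the removed $t_j$'s are pairwise distinct (since the nonzero $a_j$'s are strictly increasing by Proposition \ref{pr:basic.norm.pair.inequality}) is exactly what makes the subtraction $|\Ycc_l| - |\{1\le j<m: a_j=l\}|$ valid.

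The one place your sketch is genuinely thin is the derivation of the formula for $|\Ycc_l|$. You assert it follows from \cite[Cor.~1]{MSS}, but the paper's argument shows that what the cited corollary (\cite[Cor.~2]{MSS} in the paper's usage) actually yields are ranks in terms of $\Fp$-dimensions of quotients of $\theta$-images $\Theta(N_{K_i/F}(K_i^\times))$, where $\theta: F^\times/pF^\times\to J_1$ is the natural map. Relating these $\Theta$-dimensions to the quantities $e_i(K/F)$, which are defined via the norm filtration on $F^\times/F^{\times p}$ rather than its image in $J_1$, is a real step: one must show that the kernel $\langle a\rangle$ of $\theta$ (where $K_1=F(\root{p}\of{a})$) is contained in $N_{K_i/F}(K_i^\times) \bmod pF^\times$ for every $i$. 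The paper does this via \cite[Prop.~1]{MSS} together with the standing hypothesis that we are not in the case $p=2$, $n=1$, $-1\not\in N_{K/F}(K^\times)$, and this is precisely where the $+1$ at level $n$ is justified. Your heuristic about $\alpha$ and $\delta_0$ ``consuming spots'' captures the shape of the adjustment but does not substitute for this $\theta$-kernel computation; without it the passage from rank-of-$Y_i$ to $e_i(K/F)$ is unsupported.
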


\begin{remark*}
Recall that we interpret $K_{i-1}^\times = K_{-\infty}^\times$ when $i = 0$.
\end{remark*}

\begin{proof}
Recall that $\mathcal{T} = (\mathcal{T}_{m-1}\cup \{\alpha\})\setminus\{\alpha,\delta_1,\cdots,\delta_{m-1}\}$.  Hence if we can show that 
$$(\mathcal{T}_{m-1}\cup \{\alpha\})\cap (K_i^\times\setminus K_{i-1}^\times) = \left\{\begin{array}{ll}e_i(K/F) &\text{ if }i \neq a_0\\e_{a_0}(K/F)-1 & \text{ if }i = a_0,\end{array}\right.$$ the desired result will follow.

Note that for the sets $\mathcal{T}_0,\cdots,\mathcal{T}_{m-1}$ constructed in the previous proof, for any given $0 \leq i \leq n$ we have $|(\{\alpha\} \cup \mathcal{T}_j) \cap (K_i^\times\setminus K_{i-1}^\times)|$ does not depend on $j$.  This follows because as we iteratively constructed $\mathcal{T}_j$ from $\mathcal{T}_{j-1}$, either we had $\mathcal{T}_{j-1} = \mathcal{T}_j$ or we removed one element from $K_{a_j}^\times \setminus K_{a_j-1}^\times$ and replaced it with another element from $K_{a_j}^\times \setminus K_{a_j-1}^\times$ (specifically, we replaced $t_j$ with $\delta_j$).  Hence we can focus on studying $$|(\{\alpha\} \cup \mathcal{T}_0) \cap (K_i^\times\setminus K_{i-1}^\times)| = |(\{\alpha\} \cup \mathcal{Y}_0 \cup \cdots \cup \mathcal{Y}_n) \cap (K_i^\times\setminus K_{i-1}^\times)|.$$

We begin by recalling some facts that will be relevant in expressing $e_i(K/F)$.  By Kummer theory 
    \begin{equation*}
        (F^\times \cap pK^\times)/pF^\times = \langle a\rangle,
    \end{equation*}
    where $a$ is any element of $F^\times$ such that
    $K_1=F(\root{p}\of{a})$.  Hence the natural map
    \begin{equation*}
        \theta:F^\times/pF^\times \to J_1
    \end{equation*}
    restricted to $N_{K_i/F}(K_i^\times)+pF^{\times}/pF^\times$ is either injective (in the case that $a\not\in N_{K_i/F}(K_i^\times) \pmod{pF^\times}$) or contains a kernel of order $p$ (otherwise).  For convenience we now adopt the notation $\Theta(S)$ for $S\subset F^\times$ for $\theta(S+pF^{\times})$.

    Now by \cite[Proposition~1]{MSS}, we may select $a_i$, $0\le i<n$, such that $K_{i+1}=K_i(\root{p}\of{a_i})$ and $N_{K_j/K_i}a_j=a_i$ for $0\le i<j\le n-1$.  Since we are not in the case $p=2$, $n=1$, and $-1\not\in N_{K/F}(K^\times)$ we see that $N_{K/F}(\root{p}\of{a_{n-1}})=a_0$.  Since $a_0$ and $a$ generate the same subgroup of $F^\times/pF^\times$, we obtain that $a\in N_{K_i/F}(K_i^\times)\pmod{ pF^\times}$ for each $0\le i\le n$.  Therefore $e_i(K/F)$ equals
    $$\left\{\begin{array}{ll}\displaystyle \dim_{\Fp} \frac{N_{K_i/F}(K_i^\times)+pF^{\times}}{N_{K_{i+1}/F}(K_{i+1}^\times)+pF^{\times}} = \dim_{\Fp} \frac{\Theta(N_{K_i/F}(K_i^\times))}{ \Theta(N_{K_{i+1}/F}(K_{i+1}^\times))}& \text{ if } i<n,\\[15pt]
    %
    %\end{multline*}
    %%and
    %\begin{equation*}
        %e_n(K/F) = 
        1+\dim_{\Fp} \Theta(N_{K/F}(K^\times)) &\text{ if }i=n.\end{array}\right.$$
%    \end{equation*}
    Now by \cite[Cor. 2]{MSS},
    \begin{align*}
        \rank_{\Fp G_i} Y_i &=
        \dim_{\Fp} \frac{\Theta(N_{K_i/F}(K_i^\times))}{
        \Theta(N_{K_{i+1}/F}(K_{i+1}^\times))}, & i<n, i\neq a_0 \\
        1+ \rank_{\Fp G_i} Y_i &= \dim_{\Fp}
        \frac{\Theta(N_{K_{a_0}/F}(K_{a_0}^\times))}{
        \Theta(N_{K_{a_0+1}/F}(K_{a_0+1}^\times))}, & i= a_0 \\
        \rank_{\Fp G} Y_n &= \dim_{\Fp} \Theta(N_{K/F}(K^\times)), &
        i=n.
    \end{align*}
    Hence for $0\le i\le n$,
    \begin{align*}
        \rank_{\Fp G_i} Y_i &=
         e_i(K/F), & i<n, i\neq a_0 \\
        1+\rank_{\Fp G_i} Y_i &=
        e_{a_0}(K/F), & i= a_0 \\
        1+\rank_{\Fp G} Y_n &=
        e_n(K/F), & i=n.
    \end{align*}
    
    But recall that for all $0 \leq i \leq n$ we had $\mathcal{Y}_i \subset K_i^\times$ by construction; moreover we must have $\mathcal{Y}_i \cap K_{i-1}^\times = \emptyset$ since each $y \in \mathcal{Y}_i$ generates a free $\F_pG_i$-module.  Hence
    \begin{equation*}%\label{eq:foundation.cardinalities}
    \begin{split}
    	|(\{\alpha\} \cup \mathcal{T}_0) \cap (K_i^\times\setminus K_{i-1}^\times)| &= \left\{\begin{array}{rll}|\Ycc_i| &= \rank_{\F_pG_i} Y_i &\text{ for }i<n\\ 		|\mathcal{Y}_n|+1 &= \rank_{\F_pG}Y_n+1&\text{ for }i=n.\end{array}\right.
	\end{split}
	\end{equation*}
This gives the desired result.
\end{proof}

\section{Exceptional Modules}\label{sec:exceptional.modules}

The content of this section is to consider the properties of the (multiply-generated) module in $J_m$ determined by a collection of elements which represent a minimal norm pair of length $m$.

\begin{definition*}
  Let $m\in \N$.  An \emph{$m$-exceptional module} is an $R_mG$-module
  \begin{equation*}
        X(\alpha,\{\delta_i\},d,m) = \langle [\alpha]_m,[\delta_1]_m, \cdots,
        [\delta_{m-1}]_m\rangle \subset J_m,
  \end{equation*}
  where $(\alpha,\{\delta_i\})$ represents a minimal norm pair
  $(\mathbf{a},d)$.
\end{definition*}

The most important results of the section are the following two propositions.  For the second result, the reader is encouraged to review the definition of the module $X_{\mathbf{a},d,m}$ from section \ref{sec:main.theorems}.

\begin{proposition}\label{pr:Xsummand}
    Suppose $X$ is an $m$-exceptional module. Then $X$ is a direct
    summand of $J_m$. In particular, suppose $X=X(\alpha,\{\delta_i\},d,m)$, and let $\mathcal{T}$ be the set from Proposition \ref{prop:picking.the.right.basis}. Then
$J_m = X\oplus \sum_{t \in \Tc}
    \langle [t]_m\rangle$.
\end{proposition}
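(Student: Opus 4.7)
The plan is to prove the stronger statement by induction on $m$: that $J_m = X \oplus \sum_{t \in \Tc} \langle [t]_m \rangle$ with the sum direct, and that moreover each $\langle [t]_m\rangle$ is a free $R_m G_{e(t)}$-module, where $e(t)=\min\{i \in \{-\infty,0,\ldots,n\} : t \in K_i^\times\}$. The base case $m=1$ is Proposition~\ref{prop:picking.the.right.basis}. For the inductive step three items must be verified: (i) the classes of $\Xcc_m \cup \Tc$ generate $J_m$; (ii) each cyclic module $\langle[t]_m\rangle$ is free of the claimed type; and (iii) the sum is direct.

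Item (i) would follow from Proposition~\ref{prop:picking.the.right.basis} together with the iterative lifting argument in the introduction. For item (ii), suppose some $\langle[t]_m\rangle$ fails to be free. Lemma~\ref{le:idealrmgiII} would then force $p^{m-1}(\sigma-1)^{p^{e(t)}-1}[t]_m=0$; lifting this equation to $K^\times$ and extracting a $p$-th root produces an identity of the form
\begin{equation*}
p^{m-2}(\sigma-1)^{p^{e(t)}-1}[t]_{m-1} \;=\; c\,[\xi_p]_{m-1} \quad \text{in } J_{m-1}
\end{equation*}
for some integer $c$. If $c\,[\xi_p]_{m-1}=0$ we would contradict the inductive freeness of $\langle[t]_{m-1}\rangle$. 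Otherwise $c\,[\xi_p]_{m-1}$ is a nontrivial class, and by Proposition~\ref{pr:excprop} (used with Corollary~\ref{cor:contacting.exceptional.elements}) this class lies in the $(m-1)$-exceptional module attached to the truncated norm pair, contradicting the inductive independence of $\langle[t]_{m-1}\rangle$ from that module.

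Item (iii) would be proved by repeated application of Lemma~\ref{le:exclII}, viewing all summands as $R_m G$-submodules of $J_m$. It suffices to verify that $X^\star \cap \left(\sum_t\langle[t]_m\rangle\right)^\star = \{0\}$ together with pairwise trivial socle intersection for distinct $t$. By Lemma~\ref{le:starrmgiII} and item (ii), each $\langle[t]_m\rangle^\star$ is the $\Fp$-line generated by $p^{m-1}(\sigma-1)^{p^{e(t)}-1}[t]_m$, whose mod-$p$ image lies inside $\langle [t]_1\rangle$ in the decomposition from Proposition~\ref{prop:picking.the.right.basis}. On the other hand, the defining relation $(\sigma-d)\alpha = \sum p^i\delta_i$ together with Proposition~\ref{pr:excprop} identifies $X^\star$ as the $\Fp$-line generated by $p^{m-1}[\xi_p]_m$, whose mod-$p$ image lies strictly inside the exceptional part $\langle[\alpha]_1\rangle\oplus\sum_i\langle[\delta_i]_1\rangle$. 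Disjointness of these one-dimensional socles is then immediate.

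The principal obstacle is item (ii): controlling the potential appearance of a primitive $p$-th root of unity when passing from $J_m$ to $J_{m-1}$ by $p$-th root extraction, exactly the phenomenon anticipated in the introduction and depicted in Figure~\ref{fig:box.drawing.free.to.nonfree}. The minimality of the norm pair, the inequalities of Propositions~\ref{pr:basic.norm.pair.inequality} and \ref{pr:nv3}, and the compatible basis of Proposition~\ref{prop:picking.the.right.basis} together ensure that every such $\xi_p$ is routed into the exceptional module $X$ rather than disrupting the candidate free summands $\langle[t]_m\rangle$. Additional bookkeeping is required in the cases $p=2$ with $d\notin U_2$, where Lemma~\ref{le:upowerII} governs the precise behavior of higher $p$-th power extractions and must be tracked throughout the inductive step.
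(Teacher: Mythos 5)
Your proposal takes a genuinely different route from the paper — via socle comparisons and Lemma~\ref{le:exclII} — and also takes on more than the paper does here (the paper establishes freeness and independence of the $\langle[t]_j\rangle$ only later, in the proof of Theorem~\ref{th:main}, \emph{after} Proposition~\ref{pr:Xsummand} is available as a tool). The paper's own argument is leaner: it shows by induction on $j$ that $[X]_j \cap \sum_{t\in\Tc}\langle[t]_j\rangle = \{[0]_j\}$, using that an element $[r]_j$ of the intersection can be written in terms of \emph{both} generating sets, reducing mod $p$ to force all coefficients into $pR_jG$, then extracting a $p$-th root (introducing a multiple of $[\xi_p]_{j-1}$ which condition~(\ref{eq:defining.properties.for.exceptionality}) places in $[X]_{j-1}$), and invoking the level-$(j-1)$ intersection triviality. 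No claim about freeness of $\langle[t]_j\rangle$ is needed at any point in that proposition.

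There is a genuine gap in your item~(iii). You claim that each socle $\langle[t]_m\rangle^\star$ is the $\Fp$-line spanned by $p^{m-1}(\sigma-1)^{p^{e(t)}-1}[t]_m$ and that these lines can be distinguished because their ``mod-$p$ image lies inside $\langle[t]_1\rangle$''. But for $m\ge 2$ these socle elements all reduce to $[0]_1$ in $J_1$: they lie in $p^{m-1}J_m$, and $p^{m-1}=0$ in $R_1$. So the mod-$p$ reduction cannot separate them, and the disjointness is not ``immediate''. The actual content is exactly the phenomenon from the introduction: $p^{m-1}[w]_m = p^{m-1}[w']_m$ does not force $[w]_1 = [w']_1$, only that $w-w'$ lies in $K^{\times p}\cdot\mu_{p^{m-1}}(K)$, and controlling that root-of-unity ambiguity is the entire difficulty. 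Your assertion that $X^\star$ is the single $\Fp$-line spanned by $p^{m-1}[\xi_p]_m$ is likewise given without proof and is not clear: the module $X$ contains the free $R_mG_{a_i}$-submodules $\langle[\delta_i]_m\rangle$, each of whose socle contributes an element of $p^{m-1}J_m$, and whether these all collapse to one line inside $X$ is a nontrivial question that your sketch does not address. (There is also a smaller logical slip in the reduction: to iterate Lemma~\ref{le:exclII}, pairwise trivial socle intersection is insufficient; you need the socle of the partial direct sum accumulated so far to meet the next socle trivially.) The paper circumvents all of this by never comparing socles directly — instead it rearranges the two expressions for an intersection element so that one side lies in $\sum_t\langle[t]_{j-1}\rangle$ and the other in $[X]_{j-1}$, and uses the induction hypothesis to conclude both vanish.
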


%The following definition appears in \cite{}.  
%For $m\in \N$ and $\mathbf{a}\in \{-\infty,0,1,\dots,n\}^m$, and $d\in
%\Z$, we define the $R_mG$-module
%\begin{align*}
%    X_{\mathbf{a},d,m} := \langle y, x_0, \dots, x_{m-1} :
%    &\ (\sigma-d)y=\sum_{i=0}^{m-1} p^ix_i, \\
%    &\ \sigma^{p^{a_i}}x_i=x_i, \ 0\le i<m\rangle.
%\end{align*}
%It was shown that this module is indecomposable when $\mathbf{a}$ and $d$ satisfy certain hypotheses.

\begin{proposition}\label{pr:isoclass}
    Suppose $X=X(\alpha,\{\delta_i\},d,m)$ is an $m$-exceptional
    module.   Then
    \begin{equation*}
        X \simeq X_{\mathbf{a},d,m}
    \end{equation*}
    under a map induced by $\alpha\mapsto y$, and $\delta_i\mapsto
    x_i$, $0\le i<m$.  Furthermore, $X$ is an indecomposable $R_mG$-module.
\end{proposition}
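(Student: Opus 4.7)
The plan is to build a surjective $R_mG$-module homomorphism $\varphi\colon X_{\mathbf{a},d,m}\to X$, show it is an isomorphism, and derive indecomposability of $X$ from that of $X_{\mathbf{a},d,m}$. I would define $\varphi$ by $y\mapsto [\alpha]_m$ and $x_i\mapsto [\delta_i]_m$. Well-definedness reduces to verifying the presenting relations of $X_{\mathbf{a},d}$ hold in $X$: the relation $(\sigma-d)y=\sum p^i x_i$ is exactly the norm pair equation $(\sigma-d)\alpha=\sum_{i=0}^m p^i\delta_i$ read modulo $K^{\times p^m}$ (the $p^m\delta_m$ term vanishes in $J_m$), and $\sigma^{p^{a_i}}x_i=x_i$ follows from $\delta_i\in K_{a_i}^\times$, with the convention that $\delta_i=0$ when $a_i=-\infty$. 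Surjectivity is then immediate from the generating set of $X$.

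Next I would verify that $(\mathbf{a},d)$ satisfies the hypotheses (I)--(V) of Proposition~\ref{pr:indecompII}, so that $X_{\mathbf{a},d,m}$ is indecomposable: (I) is built into the definition of norm pair; (III) is Lemma~\ref{lem:p.2.and.n.1.gives.a0.minus.infinity}; (IV) is Proposition~\ref{pr:nv3}(\ref{it:nv3w}), with its exceptional case supplied by Proposition~\ref{pr:basic.norm.pair.inequality}(\ref{it:p.2.d.3.mod.4.means.no.zero.ai}); (V) follows from Proposition~\ref{pr:nv3}(\ref{it:nv3y}); and (II), namely $d^{p^{a_i}}\in U_{i+1}$ for $0\le i<m$, is obtained by combining Lemma~\ref{le:upowerII} (which tracks how the $U_t$-valuation of $d$ grows under $p$-power exponentiation) with Proposition~\ref{pr:nv3}(\ref{it:nv3x}) (which forces $a_{t+k}>k$ whenever $d\in U_t\setminus U_{t+1}$), the case $a_i=-\infty$ being trivial. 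This indecomposability will transfer to $X$ once $\varphi$ is shown to be injective.

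The main obstacle is injectivity, for which the plan is a mod-$p$ dimension count followed by induction on $m$. Because $X$ is a direct summand of $J_m$ (Proposition~\ref{pr:Xsummand}) and $J_m/pJ_m\simeq J_1$, the image of $X$ in $J_1$ equals $X/pX$, which by Proposition~\ref{prop:picking.the.right.basis} decomposes as $\bigoplus_{x\in\mathcal{X}_m}\langle [x]_1\rangle$. Using $\dim_{\Fp}\langle [\alpha]_1\rangle=p^{a_0}+1$ from \cite[Th.~2]{MSS} together with the free $\Fp G_{a_i}$ structure of each $\langle[\delta_i]_1\rangle$ from Proposition~\ref{pr:deltafree}, one obtains $\dim_{\Fp} X/pX=(p^{a_0}+1)+\sum_{1\le i<m,\ a_i\neq -\infty} p^{a_i}$. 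The same dimension is read off the presentation of $X_{\mathbf{a},d,m}/p$: modulo $p$ the relation $(\sigma-d)y=\sum p^i x_i$ becomes $x_0=(\sigma-1)y$, which together with $\sigma^{p^{a_0}}x_0=x_0$ forces $(\sigma-1)^{p^{a_0}+1}y=0$, while each $x_i$ for $1\le i<m$ with $a_i\neq -\infty$ contributes an independent free $\Fp G_{a_i}$-summand. Hence $\bar\varphi\colon X_{\mathbf{a},d,m}/p\to X/pX$ is an isomorphism. To lift this to injectivity of $\varphi$, I would induct on $m$, using Corollary~\ref{cor:contacting.exceptional.elements} to pass from a minimal norm pair of length $m$ to one of length $m-1$ and comparing $\varphi$ with its analogue $\varphi'$ at level $m-1$ through the natural surjection $J_m\to J_{m-1}$ and a diagram chase.

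I expect the technical core of the argument to lie in the lifting step, where the shift $\delta_{m-1}\mapsto \delta_{m-1}+p\delta_m$ appearing in the contraction of norm pairs and the $p^{m-1}$-torsion in $X_{\mathbf{a},d,m}$ must both be carefully tracked to run the induction cleanly.
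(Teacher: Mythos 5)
Your overall scaffolding matches the paper's: define the natural surjection from $X_{\mathbf{a},d,m}$ to $X$, reduce injectivity to a computation of the kernel via induction on $m$, and then deduce indecomposability of $X$ by verifying conditions (I)--(V) of Proposition~\ref{pr:indecompII} exactly as you list them. The verification of (I)--(V) you sketch is essentially the paper's, and your mod-$p$ dimension count is a correct observation (using Propositions~\ref{pr:deltafree}, \ref{prop:picking.the.right.basis}, \ref{pr:Xsummand} and \cite[Th.~2]{MSS}).

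However, the injectivity argument has a genuine gap. First, a cautionary note: the fact that $\bar\varphi\colon X_{\mathbf{a},d,m}/p\to X/pX$ is an isomorphism does not by itself give injectivity of $\varphi$ --- a surjection $\Z/p^2\to\Z/p$ has the same property --- so the inductive lifting step carries all of the weight. Second, and more importantly, the mechanism you flag as the technical core (the shift $\delta_{m-1}\mapsto\delta_{m-1}+p\delta_m$ from Corollary~\ref{cor:contacting.exceptional.elements}) is not where the real difficulty lies. The crux is that when you take a relation $\chi(W)=[0]_m$ in $J_m$, pull out a factor of $p$, and extract $p$th roots, a primitive $p$th root of unity appears: you get an equation $\chi(\tilde W)=t[\xi_p]_{m-1}$ in $J_{m-1}$, not $\chi(\tilde W)=[0]_{m-1}$. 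Your diagram chase cannot close unless you can absorb this $\xi_p$ term. The paper handles it by producing an explicit element
\begin{equation*}
Z=\tfrac{d-1}{p}P(n,0)y+P(n-1,0)x_0+\sum_{i=1}^{m-1}p^{i-1}P(n,0)x_i
\end{equation*}
in the free module (so $[\chi(Z)]_{m-1}=[\xi_p]_{m-1}$, via the second half of condition (\ref{eq:defining.properties.for.exceptionality})), subtracting $tZ$ to reduce to the inductive hypothesis applied to $[X]_{m-1}$ (decomposed via Lemmas~\ref{le:Xmodm} and~\ref{le:deltam-1free}), and then separately checking that $pZ$ already lies in the presentation ideal $\tilde I$ --- a short computation using Lemma~\ref{le:kerbasicII}. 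Without the element $Z$ and the verification $pZ\in\tilde I$, the induction does not run. This is precisely what the norm condition on $\xi_p$ in the definition of a norm pair is there to enable, and your proposal does not make use of it.
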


\begin{proof}[Proof of Proposition~\ref{pr:Xsummand}]
    Write
    \begin{equation*}
        X=X(\alpha,\{\delta_i\},d,m),
    \end{equation*} and let $\mathcal{X}_m = \{\alpha,\delta_1,\cdots,\delta_{m-1}\}$.  To prove that $J_m = X \oplus \sum_{t \in \Tc} \langle [t]_m\rangle$, observe that we have $J_m = X+\sum_{t \in \Tc} \langle [t]_m\rangle$ since $\mathcal{X}_m \cup \Tc$ gives a generating set for $J_1$.  Hence we must only show that $X \cap \sum_{t \in \Tc} \langle [t]_m\rangle = \{[0]_m\}$.  To prove this result, we proceed by induction by showing that for all $1 \leq j \leq m$ we have $[X]_j \cap \sum_{t \in \Tc} \langle [t]_j\rangle = \{[0]_j\}$.  The result holds for $j=1$ by Proposition \ref{prop:picking.the.right.basis}. 
    
    So assume that for some $j \geq 2$ we have $[X]_{j-1} \cap \sum_{t \in \Tc} \langle [t]_{j-1}\rangle = \{[0]_{j-1}\}$, and suppose that $[r]_j \in [X]_j \cap \sum_{t \in \Tc} \langle [t]_{j-1}\rangle$.  Note that by induction we have $[r]_{j-1} = [0]_{j-1}$, so that $[r]_j = p^{j-1}[z]_j$.  Furthermore since $J_j = [X]_j + \sum_{t \in \Tc} \langle [t]_j\rangle$, we can find $[x]_j \in [X]_j$ and $[y]_j \in \sum_{t \in \Tc} \langle [t]_j\rangle$ with $[z]_j = [x]_j + [y]_j$, and so the previous relation is \begin{equation}\label{eq:showing.vanishing.powers.of.X.are.in.X.part.I}[r]_j = p^{j-1}[x]_j + p^{j-1}[y]_j.\end{equation}  We proceed by showing that $p^{j-1}[y]_j = [0]_j$ and that $p^{j-1}[x]_j = [0]_j$, which will give the desired result.
    
    First we argue that $p^{j-1}[y]_j =[0]_j$.  Since $[r]_j \in [X]_j$ we can write $$[r]_j = f_\alpha [\alpha]_j + \sum_{i=1}^{m-1} f_i [\delta_i]_j \quad \quad \text{ for }f_\alpha,f_i \in R_jG.$$      Recalling that $(\sigma-d)\alpha = \sum_{i=0}^{m}p^i \delta_i$ and $d \in U_1$, we can rewrite this relation as
    \begin{equation}\label{eq:showing.vanishing.powers.of.X.are.in.X.part.II}[r]_j = g_\alpha[\alpha]_j + \sum_{i=0}^{m-1} g_i[\delta_i]_j,\end{equation} where now $g_\alpha \in R_j$ and $g_i \in R_jG$. In $J_1$ this relation gives
    $$[0]_1 = g_\alpha[\alpha]_1 + \sum_{i=0}^{m-1} g_i[\delta_i]_1.$$
Recall, however, that $[\alpha]_1$ is $\F_p$-independent from the $\F_pG$-module spanned by $\{[\delta_0]_1,\cdots,[\delta_{m-1}]_1\}$, so that we must have $g_\alpha \in pR_j$.  Write $g_\alpha = ph_\alpha$ for $h_\alpha \in R_j$.
  
  By the independence of the $\F_pG$-modules $\langle[\delta_0]_1\rangle,\langle[\delta_1]_1\rangle,\cdots,\langle[\delta_{m-1}]_1\rangle$, we must have that each $g_i$ reduces to an element of $\ann_{\F_pG}\langle[\delta_i]\rangle$.  Furthermore we know that each $\delta_i \in K_{a_i}^\times$, so $\langle [\delta_i]_1\rangle$ is an $\F_pG_{a_i}$-module; in fact, this module is a free $\F_pG_{a_i}$-module by Proposition \ref{pr:deltafree}.  Hence we have that $g_i$ reduces to $0$ modulo $pR_jG_{a_i}$, and we may find $h_i \in R_jG_{a_i}$ so that $g_i = ph_i$.    
  
  Equations (\ref{eq:showing.vanishing.powers.of.X.are.in.X.part.I}) and (\ref{eq:showing.vanishing.powers.of.X.are.in.X.part.II}) become 
    $$p^{j-1}[x]_{j} + p^{j-1}[y]_j = p\left(h_\alpha[\alpha]_j + \sum_{i=0}^{m-1} h_i[\delta_i]_j\right).$$  Dividing by $p$ and rearranging, we get a relation in $J_{j-1}$:
    $$p^{j-2}[y]_{j-1} = - p^{j-2}[x]_{j-1} + c[\xi_p]_{j-1}+h_\alpha[\alpha]_{j-1} + \sum_{i=0}^{m-1} h_i[\delta_i]_{j-1}.$$  Now the quantity on the left is in $\sum_{t \in \Tc} \langle [t]_{j-1}\rangle$, and --- since $\xi_p = \frac{d-1}{p}\alpha + N_{K{n-1}/F}\delta_0 + \sum_{i=1}^{m} p^i\delta_i$ --- the quantity on the right is in $[X]_{j-1}$.  Since these have trivial intersection in $J_{j-1}$ by induction, we conclude that $p^{j-2}[y]_{j-1} = [0]_{j-1}$, and so $p^{j-1}[y]_{j-1} = [0]_j$.  
    
    Now we argue that $p^{j-1}[x]_j =[0]_j$.  In this case, since $[r]_j \in \sum_{\tau \in \Tc} \langle [\tau]_j\rangle$ we may write \begin{equation}\label{eq:showing.vanishing.powers.of.X.are.in.T}[r]_j = \sum_{t \in \Tc} f_\tau [t]_j\end{equation} where each $f_t \in R_jG$ and all but finitely many $f_t$ are $0$. Considered in $J_1$ this gives a relation $$[0]_1 = \sum_{t \in \Tc} f_t [t]_1.$$  
Since the elements $\{[t]_1:t\in \Tc\}$ generate free, independent summands in $J_1$ by Proposition \ref{prop:picking.the.right.basis}, it follows that for each $t \in \Tc$ we can find $h_t \in R_jG$ so that $f_t = p h_t$.  Combining equations (\ref{eq:showing.vanishing.powers.of.X.are.in.X.part.I}) and (\ref{eq:showing.vanishing.powers.of.X.are.in.T}), extracting $p$th roots and rearranging gives
    $$p^{j-2}[y]_{j-1} + \sum_{t \in \Tc} h_t [t]_{j-1} = c[\xi_p]_{j-1} + p^{j-2}[x]_{j-1}.$$  The elements on the left side are in $\sum_{t \in \Tc} \langle [t]_{j-1}\rangle$ whereas the elements on the right are from $[X]_{j-1}$, and so by induction we know the right side must be zero.  Raising to the $p$th power then gives $p^{j-1}[x]_j = [0]_j$, as desired.
\end{proof}

We now move toward proving Proposition \ref{pr:isoclass}.  Our approach will be induction (on $m$), so it will be useful to know how our exceptional module from $J_m$ decomposes in $J_{m-1}$.

\begin{lemma}\label{le:Xmodm}
    Suppose $m\ge 2$ and $X=X(\alpha,\{\delta_i\},d,m)$
    is an $m$-exceptional module.  Then
    \begin{equation*}
        [X]_{m-1} = X(\alpha,\{\delta_i\},d,m-1) \oplus \langle
        [\delta_{m-1}]_{m-1}\rangle.
    \end{equation*}
\end{lemma}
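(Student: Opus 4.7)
The plan is to show that $[X]_{m-1} = X(\alpha,\{\delta_i\},d,m-1) + \langle [\delta_{m-1}]_{m-1}\rangle$ by comparing generators, and then to obtain directness by embedding this expression inside a known direct sum decomposition of $J_{m-1}$.

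First, I would compare generators. By definition, $[X]_{m-1}$ is generated as an $R_{m-1}G$-module by the reductions $[\alpha]_{m-1},[\delta_1]_{m-1},\ldots,[\delta_{m-1}]_{m-1}$ of the generators of $X$. On the other hand, by Corollary~\ref{cor:contacting.exceptional.elements} the pair $(\alpha,\{\delta_0,\ldots,\delta_{m-2},\delta_{m-1}+p\delta_m\})$ represents the minimal norm pair $((a_0,\ldots,a_{m-2}),d)$ of length $m-1$, so that $X(\alpha,\{\delta_i\},d,m-1)$ is a legitimate $(m-1)$-exceptional module generated by $[\alpha]_{m-1},[\delta_1]_{m-1},\ldots,[\delta_{m-2}]_{m-1}$. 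Thus the sum decomposition $[X]_{m-1}=X(\alpha,\{\delta_i\},d,m-1)+\langle[\delta_{m-1}]_{m-1}\rangle$ is immediate. If $a_{m-1}=-\infty$, then $\delta_{m-1}=0$ and the statement follows trivially, so we may assume $a_{m-1}\neq -\infty$ for the remainder of the argument.

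For directness, apply Proposition~\ref{prop:picking.the.right.basis} to the length-$m$ minimal norm pair to obtain a set $\Tc\subset K^\times$ satisfying
\begin{equation*}
    J_1 = \langle[\alpha]_1\rangle \oplus \bigoplus_{i=1}^{m-1}\langle [\delta_i]_1\rangle \oplus \bigoplus_{t \in \Tc}\langle [t]_1\rangle.
\end{equation*}
Then set $\Tc^\circ = \Tc \cup \{\delta_{m-1}\}$. I would verify that $\Tc^\circ$ meets the hypotheses of Proposition~\ref{prop:picking.the.right.basis} applied to the length-$(m-1)$ pair: the required direct sum decomposition is just a regrouping of the display above, and the requisite freeness of $\langle[\delta_{m-1}]_1\rangle$ over $\Fp G_{a_{m-1}}$ (which also forces $\delta_{m-1}\in K_{a_{m-1}}^\times\setminus K_{a_{m-1}-1}^\times$) is delivered by Proposition~\ref{pr:deltafree}. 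Proposition~\ref{pr:Xsummand} applied to $X':=X(\alpha,\{\delta_i\},d,m-1)$ with this $\Tc^\circ$ then yields
\begin{equation*}
    J_{m-1} = X' \oplus \sum_{t \in \Tc^\circ}\langle [t]_{m-1}\rangle.
\end{equation*}
Because $\delta_{m-1}\in \Tc^\circ$, the module $\langle[\delta_{m-1}]_{m-1}\rangle$ sits inside the second summand, and hence its intersection with $X'$ is trivial, giving the desired direct sum.

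The main obstacle will be confirming that the chosen $\Tc^\circ$ really qualifies as a set from Proposition~\ref{prop:picking.the.right.basis} for the length-$(m-1)$ pair, and that Proposition~\ref{pr:Xsummand} may be invoked with this $\Tc^\circ$ rather than only with a set arising from the specific iterative construction in the proof of Proposition~\ref{prop:picking.the.right.basis}. The first point reduces to an application of Proposition~\ref{pr:deltafree}, and the second follows from a careful reading of the proof of Proposition~\ref{pr:Xsummand}, which uses only the stated direct-sum-with-free-summands property of $\Tc$.
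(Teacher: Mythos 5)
Your proposal is correct and follows essentially the same argument as the paper: you take the set $\Tc$ from Proposition~\ref{prop:picking.the.right.basis} associated to the length-$m$ pair, augment it to $\Tc^\circ = \Tc \cup \{\delta_{m-1}\}$ (the paper's $\widehat{\Tc}$), verify via Proposition~\ref{pr:deltafree} that $\Tc^\circ$ satisfies the hypotheses needed for the length-$(m-1)$ pair, and then invoke Proposition~\ref{pr:Xsummand} at level $m-1$ to obtain the direct sum. Your extra care in explicitly handling $a_{m-1}=-\infty$ and in flagging that Proposition~\ref{pr:Xsummand} depends only on the direct-sum-with-free-summands property of $\Tc$ (not on the specific iterative construction) is a useful clarification but does not change the route.
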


\begin{proof}
Observe that $\mathcal{X}_{m-1} = \{\alpha,\delta_1,\cdots,\delta_{m-2}\}\setminus\{0\}$ and $\widehat{\mathcal{T}} = \{\delta_{m-1}\} \cup \mathcal{T}$ satisfy the conditions of Proposition \ref{prop:picking.the.right.basis} (when considering $m-1$-exceptional elements).  Applying the previous result we therefore have 
$$X(\alpha,\{\delta_i\},d,m-1) + \sum_{t \in \widehat{\mathcal{T}}}\langle[t]_m\rangle = X(\alpha,\{\delta_i\},d,m-1) \oplus \sum_{t \in \widehat{\mathcal{T}}}\langle[t]_m\rangle.$$
In particular, since $\delta_{m-1} \in \widehat{T}$, we get
\begin{align*}[X]_{m-1} &= \langle[\alpha]_{m-1},[\delta_1]_{m-1},\cdots,[\delta_{m-2}]_{m-1},[\delta_{m-1}]_{m-1}\rangle\\
&= \langle[\alpha]_{m-1},[\delta_1]_{m-1},\cdots,[\delta_{m-2}]_{m-1}\rangle+ \langle [\delta_{m-1}]_{m-1}\rangle\\
&=X(\alpha,\{\delta_i\},d,m-1) \oplus \langle [\delta_{m-1}]_{m-1}\rangle.
\end{align*}
\end{proof}

\begin{lemma}\label{le:deltam-1free}
    Let $m\ge 2$ and suppose that $(\alpha,\{\delta_i\})$ represents
    a minimal norm pair $(\mathbf{a},d)$ of length $m$. Suppose
    $a_{m-1}\neq -\infty$.  Then $\langle [\delta_{m-1}]_{m-1}\rangle$ is a free $R_{m-1}G_{a_{m-1}}$-module.
\end{lemma}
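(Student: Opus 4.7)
The plan is to argue by contradiction. Suppose $\langle[\delta_{m-1}]_{m-1}\rangle$ is not a free $R_{m-1}G_{a_{m-1}}$-module, and build a representative of a strictly smaller norm pair, contradicting the $\preceq_m$-minimality of $(\mathbf{a},d)$.

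First I would note that, since $\delta_{m-1}\in K_{a_{m-1}}^\times$, the module $\langle[\delta_{m-1}]_{m-1}\rangle$ is naturally an $R_{m-1}G_{a_{m-1}}$-module, and by Lemma~\ref{le:idealrmgiII} its freeness is equivalent to $p^{m-2}P(a_{m-1},0)[\delta_{m-1}]_{m-1}\neq[0]_{m-1}$. Assuming the contrary, there exists $\beta\in K^\times$ with $p^{m-2}N_{K_{a_{m-1}}/F}(\delta_{m-1})=p^{m-1}\beta$, equivalently $N_{K_{a_{m-1}}/F}(\delta_{m-1})^{p^{m-2}}=\beta^{p^{m-1}}$. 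Extracting $p$-th roots iteratively (using $\xi_p\in F$), one obtains $N_{K_{a_{m-1}}/F}(\delta_{m-1})=\omega\beta^p$ for some $p^{m-2}$-th root of unity $\omega\in K^\times$.

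Next, following the template of Lemma~\ref{le:delicate} and Proposition~\ref{pr:nv3}, I would construct a modified tuple $(\check\alpha,\{\check\delta_i\}_{i=0}^m)$ representing the norm pair $((a_0,\ldots,a_{m-2},-\infty),d)$ of length $m$, with $\check\delta_i=\delta_i$ for $i\le m-2$ and $\check\delta_{m-1}=0$. The construction would take the form
\begin{equation*}
    \check\alpha = \alpha - Q\delta_{m-1}+p^{m-1}\epsilon,\qquad \check\delta_m=\delta_m+T\delta_{m-1}-R\beta,
\end{equation*}
with $Q,T,R\in\Z G$ and $\epsilon\in K^\times$ to be determined. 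Applying $\phi_d$ to the required first relation of \eqref{eq:defining.properties.for.exceptionality} and using Lemmas~\ref{le:upowerII} and \ref{le:phiII} (together with Proposition~\ref{pr:nv3}(\ref{it:nv3x}) to track exponents), one finds that the extra annihilator element $p^{m-1}P(a_{m-1},0)$ supplied by our assumption, combined with the $\beta$-correction $\epsilon$, is exactly what permits the construction. Lemma~\ref{le:excmod} would then confirm that $(\check\alpha,\{\check\delta_i\})$ represents the desired norm pair, since $\check\alpha-\alpha$ lies in $[K_{n-1}^\times]_1+(\sigma-1)J_1$ because $\delta_{m-1}\in K_{a_{m-1}}^\times$ when $a_{m-1}<n$ (with the separate check $Q\in\langle\sigma-1,p\rangle$ when $a_{m-1}=n$, analogous to the argument in Proposition~\ref{pr:nv3}).

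The main obstacle is the careful joint solution of both relations in \eqref{eq:defining.properties.for.exceptionality}: the first constrains $(\sigma-d)Q$ modulo the annihilator of $\delta_{m-1}$, while the second forces the $\xi_p$-identity to remain intact after the $\delta_{m-1}$ term is removed. The hypothesis $p^{m-2}N_{K_{a_{m-1}}/F}(\delta_{m-1})=p^{m-1}\beta$ is precisely what balances these two constraints, since the correction $p^{m-1}\epsilon$ (with $\epsilon$ essentially $\beta$) absorbs exactly the discrepancy produced in the second equation by dropping the $p^{m-2}N_{K/F}(\delta_{m-1})$ term. The resulting norm pair $((a_0,\ldots,a_{m-2},-\infty),d)$ is strictly smaller than $(\mathbf{a},d)$ because $a_{m-1}\neq-\infty$, completing the contradiction.
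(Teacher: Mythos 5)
Your proof takes a genuinely different route from the paper, but it has a real gap at the central step.

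The paper does not attempt to construct a smaller norm pair. Instead, it descends the non-freeness hypothesis to $J_x$ for some $1\le x\le m-2$: taking successive $p$th roots of $p^{m-2}P(a_{m-1},0)[\delta_{m-1}]_{m-1}=[0]_{m-1}$ and using Proposition~\ref{pr:deltafree} to rule out $x=0$, one lands at $[0]_x\neq[\xi_p]_x\in\langle[\delta_{m-1}]_x\rangle$. But condition~(\ref{eq:defining.properties.for.exceptionality}) (after contracting to length $x+1$ by Corollary~\ref{cor:contacting.exceptional.elements}) shows $[\xi_p]_x\in\langle[\alpha]_x,[\delta_1]_x,\dots,[\delta_x]_x\rangle=X(\alpha,\{\delta_i\},d,x)$, and repeated application of Lemma~\ref{le:Xmodm} shows that this module is independent of $\langle[\delta_{m-1}]_x\rangle$ inside $[X]_x$. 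The contradiction is with that independence, not with minimality of $(\mathbf{a},d)$.

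Your plan is to build $(\check\alpha,\{\check\delta_i\})$ representing $((a_0,\dots,a_{m-2},-\infty),d)$ in the style of Lemma~\ref{le:delicate}. The obstacle is that the constructions in Lemma~\ref{le:delicate} and Proposition~\ref{pr:nv3} rely on a relation between $a_{m-1}$ and either an earlier $a_i$ or the twist $d$, whose size (via Lemmas~\ref{le:upowerII} and \ref{le:phiII}) supplies the $p^{m-1}$ needed to solve $(\sigma-d)Q\equiv\text{(annihilator)}-p^{m-1}\pmod{p^m\Z G}$; the present hypothesis is about the element $\delta_{m-1}$ itself and gives at most the extra annihilator element $p^{m-1}P(a_{m-1},0)$. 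Applying $\phi_d$ to the requirement $(\sigma-d)Q-p^{m-1}\in\ann_{\Z G}[\delta_{m-1}]_m$, one needs
\begin{equation*}
p^{m-1}\in \bigl(d^{p^{a_{m-1}}}-1\bigr)\Z+p^{m-1}\phi_d(P(a_{m-1},0))\Z+p^m\Z,
\end{equation*}
and if, say, $d=1$ and $a_{m-1}\ge 1$ the right-hand side lies in $p^m\Z$, so no such $Q$ exists. The correction terms $p^{m-1}\epsilon$ and $R\beta$ you allow do not rescue this, since $p^{m-1}\epsilon$ vanishes in $J_1$ and contributes nothing to the $\phi_d$-obstruction. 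So the asserted construction does not go through in general, and the argument cannot reach the contradiction with minimality. You would need the independence given by Lemma~\ref{le:Xmodm} (or an equivalent structural fact about $J_x$) to close the gap, which is precisely the route the paper takes.
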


\begin{proof}
    Suppose that $\langle [\delta_{m-1}]_{m-1}\rangle$ is not
    a free $R_{m-1}G_{a_{m-1}}$-module.  Then by
    Lemma~\ref{le:idealrmgiII} we have
    \begin{equation*}
        p^{m-2}P(a_{m-1},0)[\delta_{m-1}]_{m-1} = [0]_{m-1}.
    \end{equation*}
    Taking $p^{m-2}$-th roots, we obtain
    \begin{equation*}
        P(a_{m-1},0)[\delta_{m-1}]_1 = [\xi_{p^{x}}]_1,
    \end{equation*}
    where $\xi_{p^x}$ is a primitive $p^x$-th root of unity for some
    $0\le x\le m-2$.

    By Proposition~\ref{pr:deltafree}, $[\delta_{m-1}]_1$ generates
    a free $\Fp G_{a_{m-1}}$-module, and therefore
    $P(a_{m-1},0)[\delta_{m-1}]_1 \neq [0]_1$.  Then $x\neq 0$,
    and, moreover, no primitive $p^{x+1}$st root of unity lies in
    $K^\times$.  Observe that
    \begin{equation*}
        p^{x-1}P(a_{m-1},0)[\delta_{m-1}]_x = [\xi_{p}]_x.
    \end{equation*}
    Furthermore, this element of $J_{x}$ is nonzero, since otherwise
    $\xi_p$ is a $p^x$-th power in $K^\times$, contradicting the
    fact that no primitive $p^{x+1}$st root of unity lies in
    $K^\times$.  Hence $[0]_x\neq [\xi_p]_x\in \langle [\delta_{m-1}]_x\rangle$.

    From above, $x<m$.  By Corollary~\ref{cor:contacting.exceptional.elements},
    $m$-exceptional elements are $(x+1)$-exceptional and $(\alpha,
    \{\delta_i\}_{i=0}^{x}\cup \{\tilde\delta_{x+1}\})$ represents the minimal norm pair
    $(\mathbf{a}',d)$ of length $x+1$, for some $\tilde\delta_{x+1}\in K^\times$.
    Condition (\ref{eq:defining.properties.for.exceptionality}) tells us that
    \begin{equation*}
        [\xi_p]_x \in \langle [\alpha]_x, [\delta_1]_x,\dots,[\delta_x]_x\rangle
    \end{equation*}
    and so
    \begin{equation*}
        [0]_x\neq [\xi_p]_x \in \langle [\alpha]_x, [\delta_1]_x,\dots,[\delta_x]_x\rangle
        \cap \langle [\delta_{m-1}]_x\rangle.
    \end{equation*}

    On the other hand, since Corollary~\ref{cor:contacting.exceptional.elements} shows that $\alpha$ is $k$-exceptional for all $k<m$, repeated application of Lemma~\ref{le:Xmodm} shows
    \begin{equation*}
        [X(\alpha,\{\delta_i\},d,m)]_x =
        \langle [\alpha]_x,[\delta_1]_x,\cdots,[\delta_{x}]_x \rangle \oplus \langle
        [\delta_{x+1}]_x
        \rangle \oplus \cdots \oplus \langle [\delta_{m-1}]_x \rangle.
    \end{equation*}
    Hence we have a contradiction.
\end{proof}

\begin{proof}[Proof of Proposition~\ref{pr:isoclass}]
    We begin by verifying that $X$ has the appropriate relations amongst its generators.  Let $\chi: M\to X$ be the evaluation homomorphism from
    the free $R_mG$-module $M$ on generators $y,x_0,\dots,x_{m-1}$
    defined by $\chi(y)=\alpha$ and $\chi(x_i)=\delta_i$ for each
    $i=0,\dots, m-1$.  We claim that $I=\ker \chi$ is the ideal
    \begin{align*}
        \tilde I := \left\langle (\sigma-d) y - \sum_{i=0}^{m-1} p^i x_i, (\sigma^{p^{a_i}}-1)x_i \text{ for } i\in\{0, \cdots,
        m-1\}\right\rangle.
    \end{align*}
    We prove this claim by induction on $m$.  In the case $m=1$ the
    structure of
    $X=X(\alpha,\{\delta_i\},d,1)=X(\alpha,\{\delta_i\},1,1)$
    follows from Proposition~\ref{pr:MSS} and \cite[Theorem 2]{MSS}.

    Suppose then that $m\ge 2$ and the proposition holds for $m-1$.
    First we show that $\tilde I \subset I$.  The definition of
    $m$-exceptionality implies that the first generator of $\tilde I$
    lies in $I$, and since $\delta_i\in K_{a_i}^\times$, we have that
    the rest of the generators of $\tilde I$ lie in $I$ as well.
    Hence $\tilde I\subset I$.

    We must therefore show that $I\subset \tilde I$.
    First observe that by definition of $m$-exceptional element,
    \begin{equation*}
      \xi_p = \frac{d-1}{p}N_{K/F}(\alpha)+N_{K_{n-1}/F}(\delta_0)+
      \sum_{i=1}^m p^{i-1}N_{K/F}(\delta_i).
    \end{equation*}
    Hence we define $Z\in M$ by
    \begin{equation*}
      Z = \frac{d-1}{p}P(n,0)y+P(n-1,0)x_0+\sum_{i=1}^{m-1}
      p^{i-1}P(n,0)x_i
    \end{equation*}
    so that $[\chi(Z)]_{m-1}=[\xi_p]_{m-1}$.

    Now suppose we have an
    arbitrary element $W \in I$:
    \begin{equation*}\label{eq:rel1} W := g_\alpha y +
    \sum_{i=0}^{m-1} g_ix_i, \quad g_\alpha, g_i\in R_mG.
    \end{equation*}    Our goal is to show that $W=0 \pmod{\tilde I}$.
    Since
    $$(\sigma-d)y-x_0-\cdots-p^{m-1}x_{m-1}\in\tilde I,$$
    we may assume without loss of generality that $g_\alpha\in R_m$.
    Then we have the following equation in $J_m$:
    \begin{equation}\label{eq:chiofW}
         \chi(W) = g_\alpha [\alpha]_m + \sum_{i=0}^{m-1} g_i[\delta_i]_m=[0]_m.
    \end{equation}

    Now the same relation must hold in $J_1$, so we have
    \begin{equation*}
        g_\alpha [\alpha]_1 + \sum_{i=0}^{m-1} g_i[\delta_i]_1=[0]_1.
    \end{equation*}
    Following the same argument from the proof of Proposition \ref{pr:Xsummand}, this implies that for each $0 \leq i \leq m-1$ we have $g_i \in \langle p,\sigma^{p^{a_i}}-1\rangle$, and that $g_\alpha \in pR_m$.  Since $(\sigma^{p^{a_i}}-1)x_i \in \tilde I$, we may therefore assume that each $g_i \in \langle p\rangle$, and hence can find $h_\alpha,h_0,\cdots,h_{m-1}$ so that $g_\alpha = p h_\alpha$ and $g_i = ph_i$ for all $0 \leq i \leq m-1$.  In particular, the element $\tilde W = h_\alpha y + \sum_{i=0}^{m-1} h_i x_i$ has $W = p \tilde W$.

    Taking $p$th roots of Equation (\ref{eq:chiofW}), we see that
    \begin{equation*}
        h_\alpha[\alpha]_{m-1} + \sum_{i=0}^{m-1}
        h_i[\delta_i]_{m-1} = t[\xi_p]_{m-1}
    \end{equation*}
    for some $t\in \Z$.  Since $[\xi_p]_{m-1} \in [X]_{m-1}$ from condition (\ref{eq:defining.properties.for.exceptionality}), we therefore have a relation
    \begin{equation*}
        h_\alpha[\alpha]_{m-1} + \sum_{i=0}^{m-1}
        h_i[\delta_i]_{m-1} - t[\xi_p]_{m-1} = [0]_{m-1} \in [X]_{m-1}.
    \end{equation*}
    In particular this means that 
    $[\chi(\tilde W-tZ)]_{m-1} = [0]_{m-1}.$
    
    By induction, the kernel of the evaluation map from the free $R_{m-1}G$-module generated by $y,x_0,\cdots,x_{m-2}$ to $\langle[\alpha]_{m-1},[\delta_0]_{m-1},\cdots,[\delta_{m-2}]_{m-1}\rangle$ is $$\left\langle (\sigma-d)y-\sum_{i=0}^{m-2} p^i \delta_i, (\sigma^{p^{a_i}}-1)x_i \text{ for } i \in \{0,\cdots,m-2\}\right\rangle.$$ We also know from Lemmas \ref{le:Xmodm} and \ref{le:deltam-1free} that $$[X]_{m-1} = \langle [\alpha]_{m-1},[\delta_1]_{m-1},\cdots,[\delta_{m-2}]_{m-1}\rangle \oplus \langle [\delta_{m-1}]_{m-1}\rangle,$$ where $\langle[\delta_{m-1}]_{m-1}\rangle$ is a free $R_{m-1}G_{a_{m-1}}$-module.  Together with the fact that $p^{m-1}=0 \in R_{m-1}G$, we therefore have that the kernel of the map from the free module generated by $y,x_0,\cdots,x_{m-1}$ to $[X]_{m-1}$ is precisely
    $$\left\langle (\sigma-d)y-\sum_{i=0}^{m-1} p^i \delta_i, (\sigma^{p^{a_i}}-1)x_i \text{ for } i \in \{0,\cdots,m-1\}\right\rangle.$$
    Hence $\tilde W - tZ$ is in this ideal, and so $p(\tilde W-tZ) = W-tpZ \in \tilde I$.

    It remains only to show that $pZ\in \tilde I$.  Using
    $pP(n-1,0)x_0 = P(n,0)x_0\pmod{\tilde I}$, $(\sigma-1)P(n,0)=0$
    by Lemma~\ref{le:kerbasicII}, and $p^mx_m=0$ since $M$ is an $R_m$-module,
    we calculate
    \begin{align*}
      pZ &= (d-1)P(n,0)y+pP(n-1,0)x_0+\sum_{i=1}^{m}
      p^{i}P(n,0)x_i \\
      &= P(n,0)((d-1)y+\sum_{i=0}^{m}p^ix_i) &\pmod{\tilde I}
      \\
 %     &= P(n,0)((\sigma-1)y-(\sigma-d)y+\sum_{i=0}^m p^ix_i) &\pmod{\tilde I} \\
      &= P(n,0)(\sigma-1)y - P(n,0)((\sigma-d)y-\sum_{i=0}^{m} p^ix_i
      ) &\pmod{\tilde I} \\
      &= 0 &\pmod{\tilde I}.
    \end{align*}
     Therefore we are done.

	We now move on to prove the indecomposability of $X$, for which
it will be enough to verify the hypotheses of Proposition \ref{pr:indecompII}.

    By definition of norm pair, $d\in U_1$.  Hence (\ref{it:exc.mod.indecom.condition...d.in.U1II}) is
    satisfied.

    We turn now to (\ref{it:exc.mod.indecom.condition...power.of.dII}). If $d\in U_m$, then (\ref{it:exc.mod.indecom.condition...power.of.dII}) is satisfied. Otherwise,
    $d\in U_t\setminus U_{t+1}$ for some $1\le t<m$. Suppose first that
    $p>2$ or $d\in U_2$.  Let $i$, $0\le
    i<m$, be arbitrary with $a_i\neq -\infty$. If $i<t$ then since $d\in U_t$ we have
    $d^{p^{a_i}}\in U_t \subset U_{i+1}$.  If $i\ge t$ then by
    Proposition~\ref{pr:nv3}(\ref{it:nv3x}), $a_{i}\ge i-t+1$.  Then
    by Lemma~\ref{le:upowerII}, $d^{p^{a_i}}\in U_{t+a_i} \subset
    U_{i+1}$, as desired.

    We are left with the case $p=2$ and $d\not\in U_2$.  Let $i$,
    $0\le i<m$, be arbitrary with $a_i\neq -\infty$.  If $i=0$ then
    $d^{2^{a_i}}\in U_1$ since $d\in U_1$. Otherwise, by
    Proposition~\ref{pr:basic.norm.pair.inequality}(\ref{it:p.2.d.3.mod.4.means.no.zero.ai}), $a_i> 0$. Now if
    $d=-1$ then by Lemma~\ref{le:upowerII}, $d^{p^{a_i}}=1\in
    U_{i+1}$.  Otherwise, $d\in -U_t\setminus -U_{t+1}$ for some
    $2\le t$. If $i\le t$ then since $a_i>0$ by
    Lemma~\ref{le:upowerII} we have $d^{2^{a_i}}\in U_{t+a_i}\subset
    U_{i+1}$. If $i>t$ then by
    Proposition~\ref{pr:nv3}(\ref{it:nv3x}), $a_i\ge i-t+1$.  Then
    by Lemma~\ref{le:upowerII}, $d^{2^{a_i}} \in U_{t+a_i} \in
    U_{i+1}$, as desired. Hence (\ref{it:exc.mod.indecom.condition...power.of.dII}) is satisfied.

	Condition (\ref{it:exc.mod.indecom.condition...a0.boundsII}) is given by Lemma \ref{lem:p.2.and.n.1.gives.a0.minus.infinity}.

    For (\ref{it:exc.mod.indecom.condition...ai.vs.aj.inequalityII}), we have by
    Proposition~\ref{pr:nv3}(\ref{it:nv3w}) that $a_i+j<a_{i+j}$ for
    all $0\le i<m$, $1\le j< m-i$, and $ a_{i+j}\neq -\infty$ except
    in the case $p=2$, $m\ge 2$, $d\not\in U_2$, and $i=a_i=0$.  In
    this case, however, by
    Proposition~\ref{pr:basic.norm.pair.inequality}(\ref{it:p.2.d.3.mod.4.means.no.zero.ai}), we obtain $a_j\neq
    0$, $1\le j<m$.  Hence (\ref{it:exc.mod.indecom.condition...ai.vs.aj.inequalityII}) is satisfied.

    Finally, (\ref{it:exc.mod.indecom.condition...ai.vs.v.inequalityII}) is
    Proposition~\ref{pr:nv3}(\ref{it:nv3y}).
\end{proof}

\section{Proof of Theorem~{\ref{th:main}}}\label{sec:main.proofs}

%We now have the necessary machinery established to prove the main results.

Let $K/F$ be given, and let $m \in \mathbb{N}$.  By Proposition \ref{pr:norm.pairs.exist}, there exists a minimal norm pair $(\mathbf{a},d)$.  Let $(\alpha,\{\delta_i\}_{i=0}^m)$ represent this minimal norm pair, and let $X=X(\alpha,\{\delta_i\},d,m)$ be the associated $m$-exceptional
module.   Recall that we define $\mathcal{X}_m = \{\alpha,\delta_1,\dots,\delta_{m-1}\}\setminus
    \{0\}$, and by Proposition \ref{prop:picking.the.right.basis} there exists $\Tc\subset K^\times$
such that $$J_1 = \bigoplus_{x \in \Xcc_m}\langle[x]_1\rangle \oplus\bigoplus_{t \in \mathcal{T}} \langle [t]_1\rangle,$$ and so that for each $t \in \mathcal{T} \cap (K_i^\times \setminus K_{i-1}^\times)$ we have $\langle[t]_1\rangle$ is a free $\F_pG_i$-module.  Corollary \ref{cor:count.on.basis.elements} computes $|\mathcal{T} \cap (K_i^\times\setminus K_{i-1}^\times)|$.  Furthermore, Proposition \ref{pr:Xsummand} tells us that $X$ is a direct summand of $J_m$, with its complement generated by classes drawn from $\mathcal{T}$.

We approach the decomposition of Theorem \ref{th:main} by proving the following more precise claim. For each $1\le l\le m$, let
\begin{equation*}
    X_l=\langle [\alpha]_l, [\delta_1]_l, \dots, [\delta_{l-1}]_l \rangle
    \subset J_l,
\end{equation*}
where for $l=1$ we let $X_1=\langle [\alpha]_1\rangle\subset J_1$.
Our claim is that for each $1\le j\le m$,
\begin{enumerate}
    \item\label{it:p1} $X_j\simeq X_{\mathbf{a},d,j}$ and is an indecomposable
    $R_j G$-module;
    \item\label{it:p2} there is a direct complement $W_j$ of $X_j$ in $J_j$
    generated by $\Lcc_j := \Tc\cup (\{\delta_j,
    \dots,\delta_{m-1}\}\setminus \{0\})$
    \item\label{it:p3} for any $0\le i\le n$ and $\gamma \in \Lcc_j \cap
    (K_{i}^\times \setminus K_{i-1}^\times)$, the $R_{j}
    G$-submodule $\langle [\gamma]_j\rangle$ of $J_j$ is a free $R_jG_i$-module
    \item\label{it:p4} the set of $R_{j} G$-submodules $\left\{\langle
    [\gamma]_j\rangle : \gamma \in \Lcc_j\right\}$ is independent
    \item\label{it:p5} for each $0\le i\le n$ we have $$\vert \Lcc_j\cap (K_i^\times\setminus
    K_{i-1}^\times)\vert = \left\{\begin{array}{ll}e_i(K/F)-|\{a_l=i: 0 \leq l < j\}&\text{ if }i<n\\e_n(K/F)- \vert\{a_l=n : 0\le l<j\}\vert -1&\text{ if }i=n.\end{array}\right.$$
\end{enumerate}

Observe that for all $j$, Condition (\ref{it:p5}) comes as an easy consequence of Corollary \ref{cor:count.on.basis.elements}.  We prove the rest of the claims by induction on $j$, starting with
$j=1$. Conditions (\ref{it:p2})--(\ref{it:p4}) above follow from Propositions \ref{pr:deltafree} and \ref{prop:picking.the.right.basis}. ($\mathcal{L}_1$ is what we called $\mathcal{T}_{m-1}$ in  Proposition \ref{prop:picking.the.right.basis}).  Condition (\ref{it:p1}) follows because $X_{\mathbf{a},d,1} \simeq \F_pG/\langle(\sigma-1)^{p^{a_0}+1}\rangle$ by definition, which we know is isomorphic to $\langle[\alpha]_1\rangle$ by Proposition \ref{pr:MSS}.

%By Corollary~\ref{cor:contacting.exceptional.elements}, because $\alpha$
%is $m$-exceptional, $\alpha$ is $1$-exceptional and the minimal norm
%vector $\mathbf{a}$ of length 1 is $(a_0)$. We have $X_1 =
%\langle[\alpha]_1\rangle$.  If $a_0=-\infty$ then by definition of
%$1$-exceptional element, $(\sigma-1)[\alpha]_1=[0]_1$ and therefore
%$X_1$ is a cyclic $\Fp G$-module of length $1$. If $a_0\neq -\infty$
%then by Proposition~\ref{pr:MSS}, $X_1$ is a cyclic $\Fp G$-module
%of dimension $p^{a_0}+1$.  Hence $X_1\simeq X_{\mathbf{a},d,1}$ and $X_1$ is indecomposable (as are all cyclic $\F_pG$-modules).
%Hence we have (\ref{it:p1}).
%
%Now by Lemma~\ref{le:pfoundcont}, $\Lcc_1$ generates a direct
%complement $W_1$ of $X_1$ in $J_1$ with the requirements of
%(\ref{it:p5}). Hence (\ref{it:p2}) is satisfied.  Since $\{\alpha\}
%\cup \Lcc_1$ is a $p$-foundation, the set of $\Fp G$-submodules
%$\left\{\langle [\gamma]_1\rangle:\gamma\in \Lcc_1\right\}$ is %independent,
%giving us (\ref{it:p4}).   Moreover, by
%Proposition~\ref{pr:pfoundfree}, each $\gamma\in \Lcc_1 \cap
%(K_i^\times\setminus K_{i-1}^\times)$ generates a free $\Fp
%G_i$-module, giving us (\ref{it:p3}).
%
Now suppose that for some $j>1$ the claim is true for all $l<j$.  We
show it is true for $j$ as well. Since $X_m=X$ is an $m$-exceptional
module, by definition $(\alpha,\{\delta_i\})$ represents a minimal
norm pair $(\mathbf{a},d)$ of length $m$. Now setting $\hat\delta_k=\delta_k$ for
$k<j$ and $\hat\delta_j=\sum_{l=j}^m p^{l-j}\delta_l$, we see by
Corollary~\ref{cor:contacting.exceptional.elements} that 
$(\alpha,\{\hat\delta_i\})$ represents the minimal norm pair of length $j$ (which we'll also denote as $(\mathbf{a},d)$, though this is a slight abuse of notation).  Therefore
$X_j$ is a $j$-exceptional module. By Propositions~\ref{pr:isoclass}, $X_j$ is an indecomposable $R_j
G$-module isomorphic to $X_{\mathbf{a},d,j}$, giving us (\ref{it:p1}).

Applying Proposition \ref{pr:Xsummand} and Lemma \ref{le:Xmodm} iteratively gives
(\ref{it:p2}).

We show that if $\gamma\in \Lcc_j\cap (K_i^\times \setminus
K_{i-1}^\times)$, then $\langle [\gamma]_j\rangle$ is
a free $R_m G_i$-module. Since $\gamma\in K_i^\times$, we have that $\langle[\gamma]_j\rangle$ is
an $R_m G_i$-module.  If $\langle[\gamma]_j\rangle$ is not free, then by
Lemma~\ref{le:idealrmgiII}, $\ann_{R_jG_i} [\gamma]_j$ contains
$p^{j-1}P(i,0)$.  Therefore
\begin{equation*}
    p^{j-1}P(i,0)[\gamma]_j = [0]_j.
\end{equation*}
%that is, there exists $\beta\in K^\times$ such that
%\begin{equation*}
%    p^{j-1}N_{K_i/F}\gamma = p^j\beta.
%\end{equation*}
Taking $p$th roots gives
\begin{equation*}
    p^{j-2}P(i,0)[\gamma]_{j-1} = c[\xi_p]_{j-1}.
\end{equation*}
On the other hand, $\gamma\in \Lcc_{j-1}$, and from (\ref{it:p3}) by
induction $\langle[\gamma]_{j-1}\rangle$ is a free $R_{j-1}G_i$-module.  By
Lemma~\ref{le:starrmgiII},
\begin{equation*}
    p^{j-2}P(i,0)[\gamma]_{j-1}  \neq [0]_{j-1}.
\end{equation*}
Hence $c[\xi_p]_{j-1}\neq [0]_{j-1}$, and so $c\not\in p\Z$.  If $\tilde c\in \Z$ is chosen such that $c\tilde c=1\pmod{p}$, then we have $p^{j-2}P(i,0)\tilde c[\gamma]_{j-1} = [\xi_p]_{j-1}\neq [0]_{j-1}$.  Now
since $[\gamma]_j \in W_{j}$, we deduce that $[\xi_p]_{j-1}\in [W_{j}]_{j-1}$.

On the other hand, condition (\ref{eq:defining.properties.for.exceptionality}) gives $[\xi_p]_{j-1}\in [X_{j}]_{j-1}$, where by Lemma~\ref{le:Xmodm} we have $[X_j]_{j-1} = X_{j-1} \oplus \langle[\delta_{j-1}]_{j-1}\rangle$. Now since $\Lcc_{j}\subset \Lcc_{j-1}$
and $\delta_{j-1}\in \Lcc_{j-1}\setminus \Lcc_j$, from (\ref{it:p4})
by induction we have that
\begin{equation*}
    W_{j-1} = \langle[\delta_{j-1}]_{j-1}\rangle + [W_j]_{j-1} = \langle[\delta_{j-1}]_{j-1}\rangle\oplus [W_j]_{j-1}.
\end{equation*}
But $W_{j-1}=[W_j]_{j-1}\oplus \langle[\delta_{j-1}]_{j-1}\rangle$ is a direct complement of
$X_{j-1}$ in $J_{j-1}$ by induction, 
%and hence $W_{j}\ (p^{j-1})$ is a direct complement of $X_j\ (p^{j-1})$ in $J_{j-1}$, we have moreover
and so 
\begin{equation}\label{eq:descending.to.jminus1}
    X_{j-1} + \langle[\delta_{j-1}]_{j-1}\rangle \oplus [W_j]_{j-1} = X_{j-1} \oplus
    \langle[\delta_{j-1}]_{j-1}\rangle \oplus [W_j]_{j-1}.
\end{equation}
Having then obtained
\begin{equation*}
    [0]_{j-1}\neq [\xi_p]_{j-1} \in (X_{j-1}\oplus \langle[\delta_{j-1}]_{j-1}\rangle) \cap [W_{j}]_{j-1},
\end{equation*}
we have reached a contradiction.  Hence we have (\ref{it:p3}).

Finally we show that the set $\left\{\langle
[\gamma]_j\rangle: \gamma\in \Lcc_j\right\}$ is independent.  We do
so by induction on the number of such modules.  Since $\langle[\gamma]_j\rangle$ is
a free $R_m G_i$-module for some $i\ge 0$, no $\langle[\gamma]_j\rangle=\{[0]_j\}$, and
therefore the base case is clear.  Suppose then that $\langle[\gamma]_j\rangle \cap
\oplus \langle[\gamma_l]_j\rangle \neq \{0\}$.  By Lemma~\ref{le:starzeroII},
$\langle[\gamma]_j\rangle^\star \cap \oplus \left(\langle[\gamma_l]_j\rangle\right)^\star \neq \{[0]_{j-1}\}$.  Let
$\gamma\in \Lcc_j\cap (K_i^\times\setminus K_{i-1}^\times)$.  By
Lemma~\ref{le:starrmgiII} we conclude that
\begin{equation*}
    [0]_j\neq p^{j-1}P(i,0)[\gamma]_j = \sum p^{j-1}r_l [\gamma_l]_j,\quad r_l\in \Z G.
\end{equation*}
%that is,
%\begin{equation*}
%    0\neq p^{j-1}N_{K_i/F}(\gamma) = \sum p^{j-1}r_l \gamma_l +
%    p^j\beta,\quad \beta\in K^\times.
%\end{equation*}
Taking $p$th roots, we obtain
%\begin{equation*}
%    0\neq p^{j-2}N_{K_i/F}(\gamma) = c\xi_p + \sum p^{j-2}r_l
%    \gamma_l + p^{j-1}\beta, \quad c\in \Z,
%\end{equation*}
%which implies
\begin{equation}\label{eq:pf1eq1}
    -c[\xi_p]_{j-1} = -p^{j-2}P(i,0)[\gamma]_{j-1} + \sum p^{j-2}r_l [\gamma_l]_{j-1}.
\end{equation}
Observe that $\xi_p\in [X_j]_{j-1} = X_{j-1}\oplus \langle[\delta_{j-1}]_{j-1}\rangle$ by Condition (\ref{eq:defining.properties.for.exceptionality}), and that the right side of this expression is clearly an element of $[W_j]_{j-1}$.  By equation (\ref{eq:descending.to.jminus1}), it follows that both sides of \eqref{eq:pf1eq1} equal $[0]_{j-1}$.
In particular,
\begin{equation*}
    [0]_{j-1}\neq p^{j-2}P(i,0)[\gamma]_{j-1} = \sum p^{j-2}r_l [\gamma_l]_{j-1}.
\end{equation*}
Now since $\Lcc_j\subset \Lcc_{j-1}$, we have that  $\left\{\langle[\gamma]_{j-1}\rangle: \gamma\in \Lcc_j\right\}$ is independent in $J_{j-1}$.  Hence we have reached a contradiction and we have (\ref{it:p4}).

\begin{example*}
Return to the example from section \ref{sec:norm.pair.subsection}, where we considered the extension $\mathbb{Q}(\xi_{p^{n+1}})/\mathbb{Q}(\xi_p)$ in the case where $p$ is an odd prime.  Here we will compute the exceptional summand for this extension, as well as the rank of the $Y_0$ summand.  

The computation from our previous example showed that for $\mathbf{a} = (-\infty,\cdots,-\infty)$, and for $d \in \mathbb{Z}$ chosen so that $\sigma(\xi_{p^{n+1}}) = d\xi_{p^{n+1}}$, we have $(\mathbf{a},d)$ is a minimal norm pair.  In light of our proof of Theorem \ref{th:main}, this tells us that the module $J_m$ in this case has its exceptional module isomorphic to $$X_{\mathbf{a},d,m} = \langle y: (\sigma-d)y = 0\rangle / \langle y^{p^m}\rangle.$$  

Now we determine the rank of  $Y_0$  for this extension.  Since $\mathbf{a} = (-\infty,\cdots,-\infty)$, Theorem \ref{th:main} tells us that the rank of $Y_0$ equals $e_0(K/F) = \dim_{\mathbb{F}_p} F^\times F^{\times p}/N_{K_1/F}(K_1^\times)F^{\times p}$.  It is easy to see that $F^{\times}F^{\times p} = F^\times$, and furthermore since any $f^p \in F^{\times p}$ has $f^p = N_{K_1/F}(f)$, we see that the rank of $Y_0$ is simply $\dim_{\mathbb{F}_p} F^\times/N_{K_1/F}(K_1^\times)$.  By \cite[Proposition b]{Pierce} we have that $F^\times/N_{K_1/F}(K_1^\times)$ is isomorphic to the relative Brauer group $\text{\rm{Br}}(K_1/F)$.  But since this group contains an element of order $p$ (indeed, it is exponent $p$), we have $\text{\rm{Br}}(K_1/F) \simeq \bigoplus_{|\mathbb{N}|} \mathbb{Z}/p\mathbb{Z}$ from \cite[Proposition 4]{Fein}.  Hence $Y_0$ has countably infinite rank.
\end{example*}

\end{document}